\DeclareMathOperator*{\diam}{diam}
\DeclareMathOperator*{\identity}{Id}
\DeclareMathOperator*{\meas}{meas}
\DeclareMathOperator*{\range}{range}
\title{ Variational discretization of a
        control-constrained parabolic bang-bang
           optimal control problem
}
\author{
Nikolaus von Daniels%
    \footnote{Schwerpunkt Optimierung und Approximation,
    Universität Hamburg, Bundesstraße~55, 20146~Hamburg, Germany,
    \texttt{nvdmath@gmx.net}, \texttt{michael.hinze@uni-hamburg.de}
    }
\and Michael Hinze\footnotemark[1]
}
\date{\today}
\begin{document}

\maketitle

\newtheorem{theo}{Theorem}
\newtheorem{algo}[theo]{Algorithm}
\newtheorem{defi}[theo]{Definition}
\newtheorem{lemm}[theo]{Lemma}
\newtheorem{coro}[theo]{Corollary}
\newtheorem{assu}[theo]{Assumption}
\newtheorem{rema}[theo]{Remark}
\newtheorem{exam}[theo]{Example}

\DeclarePairedDelimiter\abs{\lvert}{\rvert}
\DeclarePairedDelimiter\norm{\lVert}{\rVert}
\newcommand{\twoset}[2]{\ensuremath{\left\{#1\,\left|\;#2\right.\right\}}}
\newcommand{\dual}[1]{\ensuremath{{#1}^*}}
\newcommand{\Uad}{U_\textup{ad}}
\newcommand{\uopt}{\ensuremath{\bar u}}
\newcommand{\yopt}{\ensuremath{\bar y}}
\newcommand{\popt}{\ensuremath{\bar p}}
\newcommand{\uoptd}{\ensuremath{\bar u_{kh}}}
\newcommand{\yoptd}{\ensuremath{\bar y_{kh}}}
\newcommand{\poptd}{\ensuremath{\bar p_{kh}}}
\newcommand{\V}{\ensuremath{H^1_0(\Omega)}}
\newcommand{\Vd}{\ensuremath{H^{-1}(\Omega)}}
\renewcommand{\H}{\ensuremath{L^2(\Omega)}}  % was Hangarian umlaut
\newcommand{\Loo}{\ensuremath{L^\infty(\Omega)}} 
\newcommand{\HI}{\ensuremath{H^1(\Omega)}} 
\newcommand{\LIIH}{\ensuremath{L^2(I,\H)}}
\newcommand{\LIIV}{\ensuremath{L^2(I,\V)}}
\newcommand{\LIIVd}{\ensuremath{L^2(I,\Vd)}}
\newcommand{\LooLoo}{\ensuremath{\infty}}
\newcommand{\person}[1]{\textsc{#1}}
\newcommand{\restr}[2]{{% we make the whole thing an ordinary symbol
  \left.\kern-\nulldelimiterspace % automatically resize the bar with \right
  #1 % the function
  \vphantom{\big|} % pretend it's a little taller at normal size
  \right|_{#2} % this is the delimiter
}}

\noindent {\small {\bf Abstract:}
We consider a control-constrained parabolic optimal control problem 
without Tikhonov term in the tracking functional.
For the numerical treatment, we use
variational discretization of its Tikhonov regularization:
For the state and the adjoint equation, we apply
Petrov-Galerkin schemes from \cite{DanielsHinzeVierling}  in time 
and usual conforming finite elements in space.
We prove a-priori estimates for the error between the
discretized regularized problem and the limit problem. 
Since these estimates are not robust
if the regularization parameter tends to zero, 
we establish robust estimates, which 
--- depending on the problem's regularity ---
enhance the previous ones. 
In the special case of bang-bang solutions, these estimates are
further improved.
A numerical example confirms our analytical findings.
}\\[2mm]

\noindent {\small {\bf Keywords:} 
Optimal control, Heat equation, Control constraints, 
Finite elements, A-priori error estimates, Bang-bang controls.
}

\section{Introduction}
In this article we are interested in the numerical solution
of the optimal control problem
\begin{equation}\tag{$\mathbb P_0$}\label{OCPl}
   \min_{u\in\Uad} J_0(u)\quad\text{with}\quad
          J_0(u):= \frac{1}{2} \norm{Tu-z}^2_{H}.
\end{equation}
Here, $T$ is basically the
(weak) solution operator of the heat equation,
the set of admissible controls $\Uad$ is given by box constraints,
and $z\in H$ is a given function to be tracked. 

Often, the solutions of \eqref{OCPl} possess a special structure:
They take values only on the bounds of the admissible set $\Uad$ 
and are therefore called \emph{bang-bang solutions}.

Theoretical and numerical questions related to this control problem
attracted much interest in recent years, see, e.g., 
\cite{deckelnick-hinze},
\cite{wachsmuth1},
\cite{wachsmuth2},
\cite{wachsmuth3},
\cite{wachsmuth4},
\cite{wachsmuth5},
\cite{gong-yan},
\cite{felgenhauer2003},
\cite{alt-bayer-etal2},
\cite{alt-seydenschwanz-reg1},
and
\cite{seydenschwanz-regkappa}. 
The last four papers are concerned with $T$ being the solution operator
of an \emph{ordinary} differential equation, the former papers with $T$ 
being a solution operator of an \emph{elliptic} PDE 
or $T$ being a continuous linear operator.
In \cite{dissnvd}, a brief survey of the content of these and some
other related papers is given at the end of the bibliography.

Problem~\eqref{OCPl} is in general ill-posed, meaning that 
a solution does not depend continuously on the datum $z$,
see \cite[p. 1130]{wachsmuth2}.
The numerical treatment of a discretized version of \eqref{OCPl} is also
challenging, e.g., due to the absense of formula \eqref{FONC} in the case
$\alpha=0$, which corresponds to problem \eqref{OCPl}.

Therefore we use Tikhonov regularization to overcome
these difficulties. The \emph{regularized problem} is given by
\begin{equation}\tag{$\mathbb P_\alpha$}\label{OCPr}
   \min_{u\in\Uad} J_\alpha(u)\quad\text{with}\quad
          J_\alpha(u):= \frac{1}{2} \norm{Tu-z}^2_{H}
                              + \frac{\alpha}{2} \norm{u}^2_{U}
\end{equation}
where $\alpha > 0$ denotes the regularization parameter.
Note that for $\alpha=0$, problem \eqref{OCPr} reduces to problem
\eqref{OCPl}.

For the numerical treatment of the regularized problem,
we then use variational discretization introduced 
by Hinze in \cite{Hinze2005}, see also \cite[Chapter 3.2.5]{hpuu}.
The state equation is treated with a Petrov-Galerkin
scheme in time using a piecewise constant Ansatz for the state 
and piecewise linear, continuous test functions.
This results in variants of the Crank-Nicolson scheme for 
the discretization of the state and the 
adjoint state, which were proposed recently in \cite{DanielsHinzeVierling}.
In space, usual conforming finite elements are taken.
See \cite{dissnvd} for the fully discrete case and
\cite{SpringerVexler2013}
for an alternative discontinuous Galerkin approach. 

The purpose of this paper is to prove a-priori bounds for 
the error between the discretized regularized problem and the
limit problem, i.e. the continuous unregularized problem.

We first derive error estimates between
the discretized regularized problem and its continuous counterpart.
Together with Tikhonov error estimates 
recently obtained in \cite{daniels}, see also \cite{dissnvd},
one can
establish estimates for the total error between the discretized regularized
solution and the solution of the continous limit problem, i.e. $\alpha =0$.
Here, second order convergence in space is not achievable and 
(without coupling) the estimates are not robust if $\alpha$ tends to zero.
Using refined arguments, we overcome both drawbacks.
In the special case of bang-bang controls, we further improve 
those estimates.

The obtained estimates suggest a coupling rule for the parameters 
$\alpha$ (regularization parameter),
$k$, and $h$ (time and space discretization parameters, respectively)
to obtain optimal convergence rates which we numerically observe.

The paper is organized as follows. 

In the next section, we introduce the
functional analytic description of the regularized problem. We recall
several of its properties, such as
existence of a unique solution for all $\alpha \ge 0$ (thus especially
in the limit case $\alpha=0$ we are interested in), an explicit 
characterization of the solution structure, and the function space
regularity of the solution.
We then introduce the Tikhonov regularization and recall
some error estimates under suitable assumptions. In the special case
of bang-bang controls, we recall a smoothness-decay lemma which later
helps to improve the error estimates for the discretized problem.

The third section is devoted to the discretization of
the optimal control problem.
At first, the discretization of the state and adjoint equation
is introduced and several error estimates needed in the later analysis 
are recalled.
Then, the analysis of variational discretization
of the optimal control problem is conducted.

The last section discusses a numerical example where we observe the
predicted orders of convergence.

\section{The continuous optimal control problem}

\subsection{Problem setting and basic properties}
Let $\Omega\subset\mathbb{R}^d$, $d\in\{2,3\}$, be a spatial domain
which is assumed to be bounded and convex with a polygonal boundary 
$\partial\Omega$. Furthermore, a fixed time interval 
$I:=(0,T)\subset\mathbb R$, $0<T<\infty$, a desired state 
$y_d\in \LIIH$, a non-negative real constant $0 \le \alpha \in \mathbb R$,
and an initial value $y_0\in\H$ are prescribed.
With the Gelfand triple
$\V \hookrightarrow \H \hookrightarrow \Vd$
we consider the following optimal control problem
\begin{equation}\label{OCP}\tag{$\mathbb P$}
\begin{aligned}
&\min_{ y\in Y,u\in\Uad} J(y,u)\quad\text{with}\quad J(y,u):= 
       \frac{1}{2}\norm{y-y_d}^2_{\LIIH}+
            \frac{\alpha}{2}\norm{u}^2_U,\\
&\text{s.t. } y=S(Bu,y_0)
\end{aligned}
\end{equation}
where $U:=L^2(\Omega_U)$ is the control space,
the (closed and convex) set of admissible controls is defined by
\begin{equation} \label{E:Uad}
   \Uad:=\twoset{u\in U}{ a(x)\le u(x)\le b(x) 
                 \quad\forall' x\in\Omega_U}
\end{equation} 
with fixed control bounds $a$, $b$ $\in$ $L^\infty(\Omega_U)$
fulfilling $a\le b$ almost everywhere in $\Omega_U$,
\[ 
    Y:=W(I):=\twoset{v\in \LIIV}{v_t \in \LIIVd} 
\]
is the state space, and the control operator $B$ as well as the
control region $\Omega_U$ are defined below. 

Note that we use the notation $v_t$ and $\partial_t v$ 
for weak time derivatives and $\forall'$ for ``for almost all''.

The operator 
\begin{equation}\label{E:operatorS}
   S: \LIIVd\times \H \rightarrow W(I),\quad (f,g) \mapsto y:= S(f,g),
\end{equation}
denotes the weak solution operator associated with the heat equation,
i.e., the linear parabolic problem
\begin{equation*}
\begin{aligned} \partial_t y -\Delta y &= f &&\text{in }I\times\Omega\,,\\
y&=0&&\text{in } I\times\partial\Omega\,,\\
y(0)&=g&&\text{in } \Omega\,.
\end{aligned}
\end{equation*}
The weak solution is defined as follows. For $(f,g) \in \LIIVd\times \H$ 
the function $y\in W(I)$ with
$\langle \cdot,\cdot \rangle := \langle \cdot,\cdot \rangle_{\Vd\V}$
satisfies the two equations
\begin{subequations}\label{E:WF}
\begin{align}
 y(0)={}&g \\
\begin{split}
% \int_0^T \langle \partial_t y(t),v(t)\rangle_{\Vd\V} + a(y(t),v(t))\, dt
% ={}& \int_0^T \langle f(t),v(t)\rangle_{\Vd\V}\, dt\\
%     \phantom{=}{}&\quad\forall\, v\in \LIIV.
\int_0^T \bigg\langle \partial_t y(t),v(t)\bigg\rangle
   + a(y(t),v(t))\, dt
={}& \int_0^T \bigg\langle f(t),v(t)\bigg\rangle\, dt\\
    \phantom{=}{}&\quad\forall\, v\in \LIIV.
\end{split}
\end{align}
\end{subequations}
Note that by the embedding 
$W(I)\hookrightarrow C([0,T],\H)$, see, e.g., \cite[Theorem 5.9.3]{evans},
the first relation is meaningful.\\
In the preceding equation, the bilinear form 
$a:H^1(\Omega)\times H^1(\Omega)\to\mathbb R$ is given by
\[
a(f,g):= \int_\Omega \nabla f(x) \nabla g(x)\ dx.
\]
We show below that \eqref{E:WF} yields an operator $S$
in the sense of \eqref{E:operatorS}.

For the control region $\Omega_U$ and the
control operator $B$ we consider two situations.
\begin{enumerate}
\item (Distributed controls) 
We set $\Omega_U := I\times\Omega$, and define the
control operator $B:U\rightarrow \LIIVd$ by 
$B:=\identity$, i.e., the identity mapping induced by the
standard Sobolev embedding $\H\hookrightarrow\Vd$. 

\item (Located controls) 
We set the control region $\Omega_U := I$.
With a fixed functional $g_1\in\Vd$ the linear and continuous control
operator $B$ is given by
\begin{equation}\label{E:B}
B: U=L^2(I)\rightarrow \LIIVd\,,\quad u\mapsto 
      \left( t\mapsto u(t)g_1 \right).
\end{equation}
The case of $D$ fixed functionals $g_i$ with controls $u_i$ 
and a control operator
$B: L^2(I,\mathbb R^D)\rightarrow \LIIVd$,
$u\mapsto \left( t\mapsto \sum_{i=1}^D u_i(t)g_i \right)$
is a possible generalization.
To streamline the presentation we restrict ourselves to the case
$D=1$ here and refer to \cite{dissnvd} for the case $D>1$.

For later use we observe that the adjoint operator $\dual B$
is given by 
\begin{equation*}  %\label{E:Badj}
\dual B: \LIIV \to U=L^2(I), \quad
   (\dual Bq)(t) = \langle g_1,q(t)\rangle_{\Vd\V}.
\end{equation*}
If furthermore $g_1\in\H$ holds, we can consider
$B$ as an operator $B: L^2(I)\to \LIIH$ 
and get the adjoint operator 
\begin{equation*}   %\label{E:dualBL2}
\dual B:\LIIH\to U=L^2(I), \quad
   (\dual Bq)(t) = (g_1,q(t))_{\H}.
\end{equation*}
Note that the adjoint operator $\dual B$
(and also the operator itself) is preserving time regularity, i.e.,
$\dual B : H^k(I,X)\to H^k(I)$ for $k\ge 0$
where $X$ is a subspace of $\H$ depending on the regularity of the $g_1$
(as noticed just before), e.g., $X=\H$ or $X=\V$.
\end{enumerate}

\begin{lemm}[Properties of the solution operator $S$]
\mbox{}
\begin{enumerate}
\item For every 
$(f,g) \in \LIIVd\times \H$ a unique state $y \in W(I)$ 
satisfying \eqref{E:WF} exists. Thus the operator $S$ from
\eqref{E:operatorS} exists. Furthermore the state fulfills
\begin{equation}\label{E:stabS}
\norm{y}_{W(I)} \le C 
    \left(\norm{f}_{\LIIVd}+\norm{g}_{\H}\right).
\end{equation}
\item Consider the bilinear form $A:W(I)\times W(I)\to\mathbb R$
given by
\begin{equation}\label{bilinA}
A(y,v):= \int_0^T -\bigg\langle v_t,y\bigg\rangle + 
 a(y,v)\, dt + \bigg\langle y(T),v(T)\bigg\rangle
% war: (y(T),v(T))_{\H}\,.
\end{equation}
with $\langle \cdot,\cdot \rangle := \langle \cdot,\cdot \rangle_{\Vd\V}$.
Then for $y\in W(I)$, equation \eqref{E:WF} is equivalent to
\begin{equation}\label{E:WFM}
A(y,v) = \int_0^T \bigg\langle f,v\bigg\rangle\, dt +
    (g,v(0))_{\H}\quad\forall\ v\in W(I).
\end{equation}
Furthermore, $y$ is the only function in $W(I)$ fulfilling
equation \eqref{E:WFM}.
\end{enumerate}
\end{lemm}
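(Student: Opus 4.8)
The plan is to prove the two parts of the lemma separately, since the first is a classical well-posedness result and the second is an integration-by-parts reformulation that leverages part~1.

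\medskip

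\noindent\textbf{Part 1 (existence, uniqueness, stability).}
First I would establish existence and uniqueness of $y \in W(I)$ satisfying \eqref{E:WF} by the standard Galerkin method: pick a basis $(w_j)_{j\ge 1}$ of $\V$ (say the Dirichlet eigenfunctions of $-\Delta$, which are also orthogonal in $\H$), seek finite-dimensional approximations $y_m(t) = \sum_{j=1}^m d_j^m(t) w_j$ solving the projected ODE system with $y_m(0)$ the $\H$-projection of $g$, and note this linear ODE has a unique solution on $[0,T]$. The key step is then the a-priori estimate: testing with $v = y_m(t)$ and using coercivity of $a(\cdot,\cdot)$ on $\V$ (via Poincar\'e) together with Gronwall gives a bound on $\norm{y_m}_{L^\infty(I,\H)} + \norm{y_m}_{\LIIV}$ in terms of $\norm{f}_{\LIIVd} + \norm{g}_\H$; comparing $\partial_t y_m$ against $f - (-\Delta)y_m$ in $\Vd$ then bounds $\norm{\partial_t y_m}_{\LIIVd}$, yielding a uniform bound in $W(I)$, which is exactly \eqref{E:stabS}. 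Passing to a weakly convergent subsequence in $W(I)$, one checks the limit satisfies the weak formulation and the initial condition (using $W(I)\hookrightarrow C([0,T],\H)$); uniqueness follows from linearity by setting $f=0$, $g=0$ and applying the same energy estimate. This is all textbook (e.g.\ \cite{evans}), so I would state it compactly and cite.

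\medskip

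\noindent\textbf{Part 2 (equivalence with the dual/transposed formulation).}
Here the plan is: given $y\in W(I)$ solving \eqref{E:WF}, I would show it solves \eqref{E:WFM}, and conversely. For the forward direction, since $y \in W(I)$ and test functions $v$ in \eqref{E:WFM} also lie in $W(I)$, I apply the integration-by-parts formula valid on $W(I)$,
\[
\int_0^T \langle \partial_t y, v\rangle + \langle \partial_t v, y\rangle \, dt
  = (y(T),v(T))_\H - (y(0),v(0))_\H,
\]
(again from the Gelfand-triple theory, \cite{evans}), to rewrite $\int_0^T \langle \partial_t y, v\rangle\,dt$ in \eqref{E:WF} as $-\int_0^T\langle \partial_t v, y\rangle\,dt + (y(T),v(T))_\H - (y(0),v(0))_\H$. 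Substituting $y(0)=g$ and moving the boundary term to the left reproduces \eqref{E:WFM} with $A$ as defined in \eqref{bilinA}; since $\LIIV$ is dense in (the relevant slice of) $W(I)$ one also needs the original relation to extend, but in fact $\LIIV \supset W(I)$ as far as admissible test functions go, so no density argument is needed there. For the converse, if $y\in W(I)$ satisfies \eqref{E:WFM} for all $v\in W(I)$, I first restrict to $v$ with $v(T)=0$ and $v(0)=0$ — e.g.\ $v \in C_c^\infty(I,\V)$, which is dense enough — to recover the PDE in the distributional sense and hence $\partial_t y$; integrating by parts backwards then isolates the boundary terms, and testing with $v$ having $v(0)\ne 0$ (but $v(T)=0$, and $v(T)\ne 0$ separately) forces $y(0)=g$ and makes the $\langle y(T),v(T)\rangle$ terms cancel consistently. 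Uniqueness in \eqref{E:WFM} then follows because any solution of \eqref{E:WFM} solves \eqref{E:WF}, which has a unique solution by Part~1.

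\medskip

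\noindent The main obstacle I anticipate is bookkeeping in Part~2: making sure the boundary terms at $t=0$ and $t=T$ are handled with the correct signs and that the set of test functions used in the converse direction is rich enough to separately extract the PDE, the initial condition, and the consistency of the terminal term — this requires choosing test functions that vanish at one endpoint but not the other. The energy estimate in Part~1 is routine Gronwall, and the integration-by-parts identity is a quoted result, so the only real care is in Part~2's reformulation.
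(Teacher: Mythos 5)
Your proposal is correct and matches what the paper does: the paper's proof is simply a citation to standard results (Galerkin/energy-estimate well-posedness for part 1 and the $W(I)$ integration-by-parts identity for part 2), which is precisely the argument you spell out. The only minor inefficiency is in your converse direction of part 2, where you speak of ``recovering'' $\partial_t y$ from the distributional formulation — since the equivalence is asserted only for $y\in W(I)$, the weak derivative is already available and you can integrate by parts directly, after which the $\langle y(T),v(T)\rangle$ terms cancel and the choice $v(0)\neq 0$ yields $y(0)=g$ by density of $\V$ in $\H$, exactly as you indicate.
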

\begin{proof}
This can be derived using standard results, see \cite[Lemma~1]{dissnvd}.
\end{proof}
An advantage of the formulation \eqref{E:WFM} in comparison to 
\eqref{E:WF} is the fact that the weak time derivative $y_t$ of $y$ is 
\emph{not} part of the equation. Later in discretizations of this 
equation, it offers the possibility to consider states which do not
possess a weak time derivative.

We can now establish the existence of a solution to problem \eqref{OCP}.
\begin{lemm}[Unique solution of the o.c.p.]\label{L:OCPexistence}
\mbox{}\\
The optimal control problem \eqref{OCP} admits
for fixed $\alpha \ge 0$
a unique solution
$(\yopt_\alpha,\uopt_\alpha)\in Y\times U$, which can be characterized
by the first order necessary and sufficient optimality condition
\begin{equation}\label{VarIneq}
   \uopt_\alpha\in\Uad,\quad
   \left( \alpha\uopt_\alpha + \dual B\popt_\alpha,u-\uopt_\alpha\right)_U 
   \ge 0\quad \forall\ u\in\Uad,
\end{equation}
where $\dual B$ denotes the adjoint operator of $B$, 
and the so-called optimal adjoint state 
$\popt_\alpha \in W(I)$ is the 
unique weak solution 
% to the adjoint problem
% \begin{equation*}
%  \begin{aligned} -\partial_t \popt_\alpha -\Delta \popt_\alpha &= h 
%          &&\text{in }I\times\Omega\,,\\
% \popt_\alpha&=0&&\text{on } I\times\partial\Omega\,,\\
% \popt_\alpha(T)&=0 &&\text{on } \Omega \end{aligned}
% \end{equation*}
% with $h:=\yopt-y_d$.
% This weak solution is 
defined and uniquely determined by the equation
\begin{equation}\label{E:AA}
% \begin{aligned}
A(v,\popt_\alpha)
% \overset{\eqref{bilinA}}{=}&
% \int_0^T -\langle \partial_t \popt_\alpha,v\rangle_{\Vd\V} + 
% a(v,\popt_\alpha)\, dt +  (v(T),\popt_\alpha(T))_{\H}\\
% \overset{\hphantom{\eqref{bilinA}}}{=}&
=
\int_0^T \langle h,v\rangle_{\Vd\V}\, dt 
\quad \forall\ v\in W(I).
% \end{aligned}
\end{equation}
% which has by integration by parts the equivalent formulation 
% \begin{multline*}
% \int_0^T \langle v_t,\popt_\alpha\rangle_{\Vd\V} +
%  a(v,\popt_\alpha)\, dt + (v(0),\popt_\alpha(0))_{\H} \\
%   =\int_0^T \langle h,v\rangle_{\Vd\V}\, dt 
% \quad\forall\ v\in W(I).
% \end{multline*}
\end{lemm}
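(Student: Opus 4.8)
The plan is to reduce \eqref{OCP} to a control problem for the reduced cost functional and then combine the direct method with the optimality theory for convex problems. First I would eliminate the state: writing $y=S(Bu,y_0)=S(Bu,0)+S(0,y_0)$ and using the embedding $W(I)\hookrightarrow\LIIH$, the tracking term becomes $\tfrac12\norm{y-y_d}_{\LIIH}^2=\tfrac12\norm{Tu-z}_{\LIIH}^2$ with the linear observation operator $Tu:=S(Bu,0)$ and the fixed datum $z:=y_d-S(0,y_0)$. By \eqref{E:stabS} and the continuity of $B$, the operator $T:U\to\LIIH$ is bounded; it is moreover injective, since $Tu=0$ forces $S(Bu,0)=0$ and hence, by \eqref{E:WF}, $Bu=0$ in $\LIIVd$, which gives $u=0$ both for $B=\identity$ and (as $g_1\neq0$) in the located case. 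Consequently the reduced functional $\hat J(u):=\tfrac12\norm{Tu-z}_{\LIIH}^2+\tfrac\alpha2\norm{u}_U^2$ is continuous and strictly convex on $U$ for every $\alpha\ge0$.

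Next I would settle existence and uniqueness by the direct method. The set $\Uad$ is nonempty (because $a\le b$), convex, closed and, since $a,b\in L^\infty(\Omega_U)$, bounded in $U$, hence weakly sequentially compact; $\hat J$ is convex and continuous, hence weakly lower semicontinuous, and it is bounded below. A minimizing sequence therefore has a weakly convergent subsequence whose limit minimizes $\hat J$ over $\Uad$, and strict convexity makes this minimizer unique. Call it $\uopt_\alpha$ and set $\yopt_\alpha:=S(B\uopt_\alpha,y_0)$. Being quadratic, $\hat J$ is Fréchet differentiable with $\hat J'(u)=\dual T(Tu-z)+\alpha u$; since $\hat J$ and $\Uad$ are convex, $\uopt_\alpha$ is characterized by the variational inequality $\bigl(\dual T(T\uopt_\alpha-z)+\alpha\uopt_\alpha,\,u-\uopt_\alpha\bigr)_U\ge0$ for all $u\in\Uad$, and convexity makes this condition sufficient as well.

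It then remains to rewrite $\dual T(T\uopt_\alpha-z)$ as $\dual B\popt_\alpha$. For $w\in\LIIH$ let $p(w)\in W(I)$ denote the solution of \eqref{E:AA} with right-hand side $h=w$; its existence and uniqueness follow from the preceding lemma on the solution operator $S$ after the substitution $q(t):=p(T-t)$, which together with integration by parts in \eqref{bilinA} recasts \eqref{E:AA} as an equation of the form \eqref{E:WFM} for $q$ with vanishing initial datum. Testing \eqref{E:AA} with $v:=S(Bu,0)\in W(I)$ gives $(Tu,w)_{\LIIH}=A\bigl(S(Bu,0),p(w)\bigr)$, while inserting $v:=p(w)$ into the identity \eqref{E:WFM} for $y=S(Bu,0)$ gives $A\bigl(S(Bu,0),p(w)\bigr)=\int_0^T\langle Bu,p(w)\rangle_{\Vd\V}\,dt=(u,\dual B p(w))_U$. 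Hence $(Tu,w)_{\LIIH}=(u,\dual B p(w))_U$ for every $u\in U$, that is, $\dual Tw=\dual B p(w)$. Taking $w=T\uopt_\alpha-z=\yopt_\alpha-y_d$ and setting $\popt_\alpha:=p(\yopt_\alpha-y_d)$ turns the variational inequality into \eqref{VarIneq}, and by construction $\popt_\alpha$ solves \eqref{E:AA} with $h=\yopt_\alpha-y_d$, which also gives the well-posedness asserted for $\popt_\alpha$.

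The part I expect to be most delicate is the well-posedness of the adjoint equation \eqref{E:AA}, which features the transposed bilinear form $A(\cdot,p)$ rather than $A(p,\cdot)$: one has to check that $t\mapsto T-t$ is an isometric bijection of $W(I)$, that it exchanges the initial and terminal time in \eqref{bilinA} (so that the terminal contribution $\langle y(T),v(T)\rangle$ passes into the initial term $(g,v(0))_\H$ of \eqref{E:WFM}), and that the data transform accordingly. Once that is settled, the identification $\dual Tw=\dual B p(w)$, the direct method, and the passage to the optimality system are routine.
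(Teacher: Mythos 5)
Your argument is correct and is precisely the standard route the paper delegates to its reference (reduction to the control via $Tu=S(Bu,0)$, direct method on the weakly compact convex set $\Uad$ with strict convexity from the injectivity of $T$ when $\alpha=0$, and the identification $\dual Tw=\dual Bp(w)$ through the time-reversed adjoint equation). The only point worth noting is that uniqueness of the control at $\alpha=0$ in the located case implicitly requires $g_1\neq0$, a nondegeneracy the paper also tacitly assumes.
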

\begin{proof}
This follows from standard results, see, e.g.,
\cite[Lemma~2]{dissnvd}.
\end{proof}
As a consequence of the fact that $\Uad$ is a closed and convex set in
a Hilbert space we have the following lemma.
\begin{lemm}\label{L:orthoproj}
In the case $\alpha > 0$ the variational inequality \eqref{VarIneq}
is equivalent to
\begin{equation}\label{FONC}
\uopt_\alpha = P_{\Uad}\left(-\frac{1}{\alpha}\dual B\popt_\alpha\right),
\end{equation}
where $P_{\Uad}: U \to \Uad$ is the orthogonal projection. 
\end{lemm}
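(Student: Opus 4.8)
The plan is to use the classical characterization of the metric projection onto a closed convex set in a Hilbert space. Recall that for a closed convex set $K$ in a Hilbert space $X$ and a point $w\in X$, the element $v\in K$ equals $P_K(w)$ if and only if $(w-v, u-v)_X \le 0$ for all $u\in K$. This is a standard fact (sometimes called the obtuse-angle or Kolmogorov criterion) and follows from the fact that $P_K(w)$ is the unique minimizer of $u\mapsto \norm{u-w}_X^2$ over $K$, whose first-order condition is exactly the stated variational inequality.

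First I would apply this criterion with $X=U$, $K=\Uad$, and $w:=-\frac{1}{\alpha}\dual B\popt_\alpha$, which is a legitimate element of $U$ since $\alpha>0$. Then $\uopt_\alpha=P_{\Uad}(w)$ holds if and only if
\[
\left(-\tfrac{1}{\alpha}\dual B\popt_\alpha-\uopt_\alpha,\,u-\uopt_\alpha\right)_U\le 0\qquad\forall\,u\in\Uad.
\]
Multiplying this inequality by $\alpha>0$ and flipping the sign gives exactly
\[
\left(\dual B\popt_\alpha+\alpha\uopt_\alpha,\,u-\uopt_\alpha\right)_U\ge 0\qquad\forall\,u\in\Uad,
\]
which is \eqref{VarIneq}. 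Since all steps are equivalences, the variational inequality \eqref{VarIneq} and the projection formula \eqref{FONC} are equivalent, and we already know from Lemma~\ref{L:OCPexistence} that $\uopt_\alpha\in\Uad$, so the membership requirement in \eqref{FONC} is automatic.

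There is essentially no hard part here: the only thing to be careful about is that the equivalence uses $\alpha>0$ in two places — to make the argument of the projection well-defined and to rescale the inequality without reversing it — which is precisely why the lemma is stated only for $\alpha>0$. For completeness one could also remark that $\Uad$ being defined by pointwise box constraints makes $P_{\Uad}$ explicit, namely $P_{\Uad}(w)(x)=\max\{a(x),\min\{b(x),w(x)\}\}$ for almost all $x\in\Omega_U$, though this explicit form is not needed for the statement itself.
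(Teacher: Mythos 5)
Your proof is correct and is exactly the standard argument: the paper itself just cites \cite[Corollary 1.2, p. 70]{hpuu} with $\gamma=\tfrac1\alpha$, and that reference proves the equivalence via the same projection characterization $(w-P_K(w),u-P_K(w))_X\le 0$ that you write out. The algebraic rescaling by $\alpha>0$ and the remark on where positivity of $\alpha$ is used are both accurate.
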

\begin{proof}
See \cite[Corollary 1.2, p. 70]{hpuu} with $\gamma = \frac 1\alpha$.
\end{proof}
The orthogonal projection in \eqref{FONC} can be made explicit in our
setting.
\begin{lemm}\label{L:projexplicit}
Let us for $a,b\in\mathbb R$ with $a\le b$ consider the projection of a
real number $x\in\mathbb R$ into the interval $[a,b]$, i.e.,
$P_{[a,b]}(x):=\max\{a,\min\{x,b\}\}$.

There holds for $v\in U$
\[
P_{\Uad}(v)(x) = P_{[a(x),b(x)]}(v(x))\quad \forall' x\in\Omega_U.
\]
\end{lemm}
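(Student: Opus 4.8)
The plan is to invoke the standard variational characterization of the metric projection onto a closed convex set in a Hilbert space. Since $\Uad$ is closed and convex in $U = L^2(\Omega_U)$, for a given $v \in U$ the projection $w := P_{\Uad}(v)$ is the unique element of $\Uad$ satisfying $\left(v - w, u - w\right)_U \le 0$ for all $u \in \Uad$. Hence it suffices to verify that the function $\widetilde w$ defined by $\widetilde w(x) := P_{[a(x),b(x)]}(v(x))$ for almost all $x \in \Omega_U$ belongs to $\Uad$ and satisfies this inequality; the claim then follows from uniqueness of the projection.

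First I would check that $\widetilde w$ is a well-defined element of $\Uad$. As $a$, $b$, $v$ are measurable and $P_{[a(x),b(x)]}(v(x)) = \max\{a(x), \min\{v(x), b(x)\}\}$ is obtained from pointwise maxima and minima of measurable functions, $\widetilde w$ is measurable. By construction $a(x) \le \widetilde w(x) \le b(x)$ for almost all $x$ (using $a \le b$ a.e.), so $\widetilde w \in \Uad$ once we know $\widetilde w \in U$; and the latter holds because $\abs{\widetilde w(x)} \le \max\{\abs{a(x)}, \abs{b(x)}, \abs{v(x)}\}$ almost everywhere, with $a, b \in L^\infty(\Omega_U)$ and $v \in L^2(\Omega_U)$.

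The key step is the variational inequality, which I would reduce to its scalar version: for real numbers $a \le b$, every $x \in \mathbb R$ and every $t \in [a,b]$ satisfy $\left(x - P_{[a,b]}(x)\right)\left(t - P_{[a,b]}(x)\right) \le 0$. This is immediate from distinguishing the three cases $x \le a$, $a < x < b$, $x \ge b$: in the outer two cases exactly one of the factors is $\le 0$ and the other $\ge 0$, while in the middle case $P_{[a,b]}(x) = x$ and the first factor vanishes. Applying this pointwise with the substitutions $x \rightsquigarrow v(x)$, $t \rightsquigarrow u(x)$, $a \rightsquigarrow a(x)$, $b \rightsquigarrow b(x)$ for an arbitrary $u \in \Uad$ yields $\left(v(x) - \widetilde w(x)\right)\left(u(x) - \widetilde w(x)\right) \le 0$ for almost all $x \in \Omega_U$, and integration over $\Omega_U$ gives $\left(v - \widetilde w, u - \widetilde w\right)_U \le 0$ for all $u \in \Uad$, as required.

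I do not anticipate any real difficulty here; the only points needing a small amount of care are the measurability and square-integrability of $\widetilde w$ and the elementary scalar inequality above, both of which are routine.
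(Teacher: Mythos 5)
Your proof is correct and is exactly the standard argument: the paper itself only cites \cite[Lemma~4]{dissnvd} for this ``standard result,'' and your verification --- that the pointwise projection is measurable, lies in $\Uad$, and satisfies the variational inequality $\left(v-\widetilde w,\,u-\widetilde w\right)_U\le 0$ via the elementary scalar case distinction, so that uniqueness of the metric projection finishes the job --- is precisely what that reference contains. No gaps.
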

\begin{proof}
See \cite[Lemma~4]{dissnvd}
for a proof of this standard result in our setting.
\end{proof}
We now derive an explicit characterization of the optimal control.
\begin{lemm}\label{L:ucharact}
If $\alpha >0$, then for almost all $x\in \Omega_U$
there holds for the optimal control
\begin{equation}\label{E:uoptpwchar}
  \uopt_\alpha  (x)= \begin{cases}   
        a(x)&\text{if $\dual B\popt_\alpha (x)+\alpha a(x) >0$},\\
        -\alpha^{-1}\dual B\popt_\alpha (x)
           &\text{if $\dual B\popt_\alpha (x)
                     + \alpha \uopt_\alpha (x) = 0$},\\
        b(x)&\text{if $\dual B\popt_\alpha (x)+\alpha b(x) <0$}.
                      \end{cases}
\end{equation}

Suppose $\alpha = 0$ is given. Then the optimal control fulfills a.e.
\begin{equation}\label{E:bangbang}
  \uopt_0 (x)= 
     \begin{cases}   
        a(x)&\text{if $\dual B\popt_0 (x) >0$},\\
        b(x)&\text{if $\dual B\popt_0 (x) <0$}.
     \end{cases}
\end{equation}
\end{lemm}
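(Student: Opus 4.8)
The plan is to prove the two assertions separately, since the projection identity \eqref{FONC} is available only for $\alpha>0$. In that case the argument is essentially bookkeeping: combining Lemma~\ref{L:orthoproj} with Lemma~\ref{L:projexplicit} gives, for almost every $x\in\Omega_U$,
\[
\uopt_\alpha(x)=P_{[a(x),b(x)]}\!\left(-\alpha^{-1}(\dual B\popt_\alpha)(x)\right)
=\max\Bigl\{a(x),\min\bigl\{-\alpha^{-1}(\dual B\popt_\alpha)(x),\,b(x)\bigr\}\Bigr\}.
\]
I would then distinguish the three cases according to whether the unconstrained value $-\alpha^{-1}(\dual B\popt_\alpha)(x)$ lies below $a(x)$, inside $[a(x),b(x)]$, or above $b(x)$. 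Multiplying the defining scalar inequality of each case by $\alpha>0$ and rearranging turns it into the conditions stated in \eqref{E:uoptpwchar}: the inequality $-\alpha^{-1}(\dual B\popt_\alpha)(x)<a(x)$ becomes $(\dual B\popt_\alpha)(x)+\alpha a(x)>0$, the inequality $-\alpha^{-1}(\dual B\popt_\alpha)(x)>b(x)$ becomes $(\dual B\popt_\alpha)(x)+\alpha b(x)<0$, and in the remaining case $\uopt_\alpha(x)=-\alpha^{-1}(\dual B\popt_\alpha)(x)$, which is exactly $(\dual B\popt_\alpha)(x)+\alpha\uopt_\alpha(x)=0$; the three cases overlap harmlessly at the endpoints $a(x)$ and $b(x)$.

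For $\alpha=0$ formula \eqref{FONC} is unavailable, so I would instead work directly with the variational inequality \eqref{VarIneq}, which for $\alpha=0$ reads $(\dual B\popt_0,u-\uopt_0)_U\ge 0$ for all $u\in\Uad$. The crucial step is to localize this global $L^2$-inequality over the box-constrained set $\Uad$ to its pointwise form: for almost every $x\in\Omega_U$,
\[
(\dual B\popt_0)(x)\bigl(v-\uopt_0(x)\bigr)\ge 0\qquad\text{for all }v\in[a(x),b(x)].
\]
Granted this, testing with $v=a(x)$ on the set where $(\dual B\popt_0)(x)>0$ forces $(\dual B\popt_0)(x)(a(x)-\uopt_0(x))\ge 0$; since $\uopt_0(x)\ge a(x)$ makes the second factor non-positive, we get $\uopt_0(x)=a(x)$ there. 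Testing with $v=b(x)$ where $(\dual B\popt_0)(x)<0$ yields $\uopt_0(x)=b(x)$ by the mirror argument, which is precisely \eqref{E:bangbang}.

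The only non-routine point is the localization of \eqref{VarIneq}, i.e.\ the equivalence of a variational inequality over a set defined by pointwise constraints with its pointwise version; everything else is elementary. I expect this to be the main obstacle, and I would handle it by contraposition: if the pointwise inequality failed on a measurable set $E\subset\Omega_U$ of positive measure, one modifies $\uopt_0$ only on $E$, replacing it there by the admissible value that makes the product strictly negative (namely $v=a(x)$ where $(\dual B\popt_0)(x)>0$ and $\uopt_0(x)>a(x)$, and $v=b(x)$ in the mirror situation), producing an admissible $u$ with $(\dual B\popt_0,u-\uopt_0)_U<0$ and contradicting \eqref{VarIneq}. Alternatively one may simply invoke the corresponding standard result, cf.\ \cite{hpuu} and \cite{dissnvd}. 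Note finally that the lemma asserts nothing about $\uopt_0(x)$ on the set where $(\dual B\popt_0)(x)=0$, consistently with the fact that \eqref{VarIneq} imposes no condition there.
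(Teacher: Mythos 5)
Your proposal is correct: for $\alpha>0$ the combination of \eqref{FONC} with Lemma~\ref{L:projexplicit} and the three-case analysis is exactly the standard argument, and for $\alpha=0$ the pointwise localization of \eqref{VarIneq} (including your contraposition construction of a measurable competitor on the exceptional set) is the usual and valid way to obtain \eqref{E:bangbang}. The paper itself only cites \cite[Lemma~5]{dissnvd} for this standard result, and your argument is the one that reference carries out.
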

\begin{proof}
We refer to \cite[Lemma~5]{dissnvd} for a proof of this standard
result in our setting.
\end{proof}

\begin{rema}\label{R:bangbang}
As a consequence of \eqref{E:bangbang} we have:
If $\dual B\popt_0$ vanishes only on a subset of
$\Omega_U$ with Lebesgue measure zero, the optimal control
$\uopt_0$ only takes values on the
bounds $a, b$ of the admissible set $\Uad$.
In this case $\uopt_0$ is called a \emph{bang-bang solution}.
\end{rema}

Assuming more regularity on the data than stated above,
we get regularity for the optimal state $\yopt_\alpha$
and the adjoint state $\popt_\alpha$ needed for the
convergence rates in the numerical realization of the problem.

We use here and in what follows the notation 
\[ 
   \norm{\cdot} := \norm{\cdot}_{\H}, 
   \norm{\cdot}_I := \norm{\cdot}_{\LIIH},
\]
\[   (\cdot,\cdot) := (\cdot,\cdot)_{\H}, \quad\text{and}\quad
   (\cdot,\cdot)_I := (\cdot,\cdot)_{\LIIH}. 
\]
 
\begin{assu}\label{A:Regularity}
Let $y_d\in H^2(I,\H)\bigcap H^1(I,H^2(\Omega)\cap\V)$ with
$\Delta y_d(T)$ $\in$ $\V$ and $y_0\in\V$.
Furthermore, we expect $\Delta y_0\in\V$.
In the case of distributed controls, we assume 
$a$, $b\in$ $H^1(I,\H)$ $\bigcap$ $C(\bar I,\V\cap C(\bar\Omega))$.
In the case of located controls, we assume $g_1\in\V$,
and $a,b\in W^{1,\infty}(I)$.
\end{assu}

\begin{lemm}[Regularity of problem \eqref{OCP}, $\alpha > 0$]
\label{L:regOCPposalpha}
Let Assumption~\ref{A:Regularity} hold and let $\alpha > 0$.
For the unique solution $(\yopt,\uopt)$ of \eqref{OCP} 
and the corresponding adjoint state $\popt$ there holds 
\begin{itemize}
\item
$ \popt\in H^3(I,\H) \bigcap H^2(I,H^2(\Omega)\cap\V) 
       \hookrightarrow C^2(\bar I,\V),
$
\item
$
\yopt\in H^2(I,\H) \bigcap H^1(I,H^2(\Omega)\cap\V)
       \hookrightarrow C^1(\bar I,\V),\quad\text{and}
$
\item
$
\uopt\in W^{1,\infty}(I)
$ 
 in the case of located controls or
\item
$
\uopt\in H^1(I,L^2(\Omega))\cap C(\bar I,\V)\cap 
          C(\bar I\times\bar\Omega)$
in the case of distributed controls.
\end{itemize}
With some constant $C>0$ independent of $\alpha$,
we have the a priori estimates
\begin{multline}
%\begin{equation}
\label{E:uypapriori}
%\begin{aligned}
\norm{\partial^2_t \yopt}_I+\norm{\partial_t\Delta \yopt}_I
    + \max_{t\in [0,T]} \norm{\nabla \partial_t \yopt(t)} \\
\le d_1(\uopt) := C\left(\norm{B\uopt}_{H^1(I,\H)}
   +\norm{\nabla B\uopt(0)}
   +\norm{\nabla \Delta y_0}\right),\\
\shoveleft{
 \norm{\partial^2_t \popt}_I+\norm{\partial_t\Delta \popt}_I
    + \max_{t\in [0,T]} \norm{\nabla \partial_t \popt(t)}
} \\
  \le d_0(\uopt) := C \left(\norm{y_d}_{H^1(I,L^2(\Omega))}
          +\norm{\nabla y_d(T)}
          +\norm{B\uopt}_I+\norm{\nabla y_0}\right),\ \text{and}\\
\shoveleft{
%\text{and }\quad
\norm{\partial^3_t \popt}_I+\norm{\partial^2_t\Delta \popt}_I
       + \max_{t\in [0,T]} \norm{\nabla \partial^2_t \popt(t)}
}\\
\le d_1^+(\uopt) := d_1(\uopt) + \\
C \left(\norm{\partial^2_t y_d}_I
   +\norm{\nabla \partial_t y_d(T)} 
   + \norm{\nabla \Delta y_d(T)}  
   +\norm{\nabla B\uopt(T)} \right).
%\end{aligned}
%\end{equation}
\end{multline}
\end{lemm}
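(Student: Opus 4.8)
The plan is to obtain the three regularity levels and their a~priori bounds by bootstrapping maximal parabolic regularity along the two equations characterising the solution: the forward state equation $\partial_t\yopt-\Delta\yopt=B\uopt$, $\yopt(0)=y_0$, and the backward adjoint equation $-\partial_t\popt-\Delta\popt=\yopt-y_d$, $\popt(T)=0$, which is \eqref{E:AA} with $h=\yopt-y_d$, the two being coupled through the projection formula \eqref{FONC}. Since $\yopt\in W(I)$ and $y_d\in\LIIH$ hold a~priori, the adjoint data lies at least in $\LIIH$, so the convexity of $\Omega$ (elliptic $H^2$-regularity) first gives the baseline $\popt\in H^1(I,\H)\cap L^2(I,H^2(\Omega)\cap\V)$; Lemma~\ref{L:ucharact}, the fact that the pointwise map $v\mapsto\max\{a,\min\{v,b\}\}$ is a contraction that preserves $H^1$ in time (Stampacchia) and $W^{1,\infty}$, and the time-regularity-preserving property of $\dual B$ recalled above then transfer this to $\uopt$, hence via the state equation back to $\yopt$, and the loop is iterated; it terminates after finitely many rounds because Assumption~\ref{A:Regularity} caps the smoothness of the data. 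Throughout, $C$ is only the constant of the underlying parabolic and elliptic estimates and is therefore independent of $\alpha$; the $\alpha$-dependence sits entirely inside the right-hand sides $d_1(\uopt)$, $d_0(\uopt)$, $d_1^+(\uopt)$, through $\uopt$.

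For the first estimate in \eqref{E:uypapriori} I would differentiate the state equation once in time. Then $w:=\partial_t\yopt$ solves $\partial_t w-\Delta w=\partial_t(B\uopt)$ with $w(0)=\Delta y_0+B\uopt(0)$, obtained by evaluating the state equation at $t=0$. Testing with $\partial_t w$ and absorbing, together with $-\Delta w=\partial_t(B\uopt)-\partial_t w\in\LIIH$ and elliptic regularity on the convex polygon, yields $\max_t\norm{\nabla\partial_t\yopt(t)}+\norm{\partial_t^2\yopt}_I+\norm{\partial_t\Delta\yopt}_I\le C\bigl(\norm{B\uopt}_{H^1(I,\H)}+\norm{\nabla B\uopt(0)}+\norm{\nabla\Delta y_0}\bigr)=d_1(\uopt)$, and $\yopt\in C^1(\bar I,\V)$ is then a standard space-time embedding. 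The one technical point is to justify the formal differentiation, which is done via time difference quotients.

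The control regularity and the $d_0$-estimate come next. On the inactive set $\uopt=-\alpha^{-1}\dual B\popt$ and on the active set $\uopt\in\{a,b\}$; since $\dual B$ maps $H^k(I,X)\to H^k(I)$ and the pointwise projection is $1$-Lipschitz and preserves $H^1$ in time and $W^{1,\infty}$, Assumption~\ref{A:Regularity} on $a$, $b$, $g_1$ gives the claimed regularity of $\uopt$ (in the distributed case the continuity on $\bar I\times\bar\Omega$ follows from $a,b\in C(\bar I,C(\bar\Omega))$ and $\dual B\popt\in C(\bar I\times\bar\Omega)$). Hence $\yopt-y_d$ has exactly the regularity needed to run the energy argument of the previous paragraph for the backward equation, now with terminal value $\partial_t\popt(T)=-(\yopt(T)-y_d(T))$ coming from $\popt(T)=0$; this yields the middle estimate, where one uses the elementary parabolic bounds $\norm{\nabla\yopt(T)}+\norm{\partial_t\yopt}_I\le C(\norm{B\uopt}_I+\norm{\nabla y_0})$, which is why $d_0$ carries only $\norm{B\uopt}_I$ besides $\norm{y_d}_{H^1(I,L^2(\Omega))}$, $\norm{\nabla y_d(T)}$ and $\norm{\nabla y_0}$, and in particular does not contain a $d_1$-summand.

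Finally, for $d_1^+$ I would differentiate the adjoint equation twice in time and repeat the energy estimate for $r:=\partial_t^2\popt$: its right-hand side is $\partial_t^2(\yopt-y_d)$, which is why the full state bound $d_1(\uopt)$ reappears as a summand, and its terminal value, computed from $\popt(T)=0$ and $\partial_t\popt(T)=-(\yopt(T)-y_d(T))$ by differentiating the equation at $t=T$, equals $\Delta\yopt(T)-\Delta y_d(T)-\partial_t\yopt(T)+\partial_t y_d(T)$; writing $\nabla\Delta\yopt(T)=\nabla\partial_t\yopt(T)-\nabla B\uopt(T)$ via the state equation at $t=T$ and bounding $\nabla\partial_t\yopt(T)$ through $d_1(\uopt)$ produces exactly the extra terms $\norm{\partial_t^2 y_d}_I$, $\norm{\nabla\partial_t y_d(T)}$, $\norm{\nabla\Delta y_d(T)}$ and $\norm{\nabla B\uopt(T)}$, the elliptic-regularity step again accounting for $\norm{\partial_t^2\Delta\popt}_I$. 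I expect the main obstacle to be bookkeeping rather than depth: one has to propagate the compatibility conditions (at $t=0$ for the state, at $t=T$ for the adjoint) correctly through each differentiation, check that every difference-quotient argument is admissible given only the regularity established one level below, and keep all constants free of $\alpha$ --- no single step is hard, but the three estimates must be organised so that only norms, never regularity classes, remain coupled; the complete argument is carried out in \cite{dissnvd}.
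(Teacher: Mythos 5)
Your proposal is correct and follows the approach the paper intends: the paper itself only cites \cite[Lemma~12]{dissnvd} for this result, and that reference carries out exactly the bootstrapping you describe --- energy estimates for the time-differentiated state and adjoint equations on the convex domain, coupled through the projection formula \eqref{FONC} and the stability of $P_{\Uad}$, with the compatibility values at $t=0$ (state) and $t=T$ (adjoint) producing precisely the terms in $d_1$, $d_0$, and $d_1^+$. Your bookkeeping of those terminal/initial conditions and of the $\alpha$-independence of the constants reproduces the stated bounds exactly, so no gap remains beyond the routine difference-quotient justification you already flag.
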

\begin{proof}
See \cite[Lemma~12]{dissnvd}.
\end{proof}

\begin{rema}[Regularity in the case $\alpha=0$]
\label{R:regOCPalpha0}
In the case $\alpha = 0$, we have less regularity: 
\begin{itemize}
   \item $\popt \in H^1(I,H^2(\Omega)\cap\V)\bigcap H^2(I,\H)
                \hookrightarrow C^1(\bar I,\V)$, and
   \item $\yopt \in L^2(I,H^2(\Omega)\cap\V)\bigcap H^1(I,\H)
                \hookrightarrow C([0,T],\V).$
\end{itemize}
Since \eqref{FONC} does not hold, we can not
derive regularity for $\uopt$ from that of $\popt$ as above.
We only know from the definition of $\Uad$ that 
$\uopt \in L^\infty(\Omega_U)$, but might be 
discontinuous as we will see later. 
\end{rema}

\subsection{Tikhonov regularization}
For this subsection, it is useful to rewrite problem \eqref{OCP} 
in the reduced form \eqref{OCPr}
% \begin{equation}\label{OCPr}\tag{$\mathbb P_\alpha$}
%     \min_{u\in\Uad} J_\alpha(u)\quad\text{with}\quad 
%     J_\alpha(u):=\frac{1}{2} \norm{Tu-z}^2_{H}
%         +\frac{\alpha}{2}\norm{u}^2_U
% \end{equation}
with $H:=\LIIH$, fixed data $z:=y_d-S(0,y_0)$ and the linear and
continuous control-to-state
operator $T : U \to H$, $Tu:=S(Bu,0)$.
% 
% The limit problem, i.e., $\alpha = 0$, now reads
% \begin{equation}\label{OCPl}\tag{$\mathbb P_0$}
%    \min_{u\in\Uad} J_0(u)
%           = \min_{u\in\Uad} \frac{1}{2} \norm{Tu-z}^2_{H}.
% \end{equation}
% 
From now onwards we assume 
\begin{equation}\label{E:boundssep}
    a \le 0 \le b
\end{equation}
in a pointwise almost everywhere sense 
where $a$ and $b$ are the bounds of the admissable set $\Uad$. 
For the limit problem \eqref{OCPl},
which we finally want to solve, this assumption
can always be met by a simple transformation of the variables.

To prove rates of convergence with respect to $\alpha$, we rely on
the following assumption.
\begin{assu}\label{A:sourcestruct}
    There exist a set $A\subset \Omega_U$, a function $w\in H$ with
    $\dual Tw\in L^\infty(\Omega_U)$, and 
    constants $\kappa > 0$ and $C\ge 0$, such that there holds 
    the inclusion  
    $\twoset{x\in\Omega_U}{ \dual B\popt_0(x)=0 }\subset A^c$
    for the complement $A^c=\Omega_U\backslash A$
    of $A$ and in addition    
   \begin{enumerate}
       \item (source condition) 
          \begin{equation}\label{E:source}
               \chi_{A^c} \uopt_0 = \chi_{A^c} P_{\Uad}(\dual Tw).
          \end{equation}
       \item (($\popt_0$-)measure condition)
          \begin{equation}\label{E:struct}
              \forall\ \epsilon > 0:\quad
               \meas(\twoset{x\in A}{0 \le 
                    \abs{\dual B\popt_0(x)} \le \epsilon})
                        \le C\epsilon^\kappa
          \end{equation}
         with the convention that $\kappa :=\infty$ if the left-hand
         side of \eqref{E:struct} is zero for some $\epsilon > 0$.
   \end{enumerate} 
\end{assu}
For a discussion of this assumption we refer to the texts subsequent to 
\cite[Assumption~7]{daniels} or \cite[Assumption~15]{dissnvd}.

Key ingredient in the analysis of the regularization error and
also of the discretization error considered later is the following lemma,
see \cite[Lemma~8]{daniels} or \cite[Lemma~16]{dissnvd} for a proof.

\begin{lemm}\label{L:l1reg} 
  Let Assumption~\ref{A:sourcestruct}.2 hold.
  For the solution $\uopt_0$ of \eqref{OCPl}, there holds 
  with some constant $C>0$ independent of $\alpha$ and $u$ 
  \begin{equation}\label{E:l1reg}
       C\norm{u-\uopt_0}_{L^1(A)}^{1+1/\kappa} 
            \le (\dual B\popt_0,u-\uopt_0)_U\quad\forall\ u\in\Uad.
  \end{equation}
\end{lemm}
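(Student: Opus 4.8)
The plan is to prove the inequality \eqref{E:l1reg} by combining the bang-bang structure of $\uopt_0$ from \eqref{E:bangbang} with the measure condition \eqref{E:struct}. First I would observe that, by Lemma~\ref{L:projexplicit} together with \eqref{E:bangbang}, for almost every $x\in\Omega_U$ and every admissible $u\in\Uad$ the pointwise product $\dual B\popt_0(x)\,(u(x)-\uopt_0(x))$ is \emph{nonnegative}: indeed, where $\dual B\popt_0(x)>0$ we have $\uopt_0(x)=a(x)$, which is the smallest admissible value, so $u(x)-\uopt_0(x)\ge 0$; symmetrically where $\dual B\popt_0(x)<0$ we have $\uopt_0(x)=b(x)$, so $u(x)-\uopt_0(x)\le 0$; and where $\dual B\popt_0(x)=0$ the product is zero. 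Hence the integrand of $(\dual B\popt_0,u-\uopt_0)_U$ is pointwise nonnegative, and we may freely restrict the domain of integration. In particular,
\[
(\dual B\popt_0,u-\uopt_0)_U \ge \int_A \dual B\popt_0(x)\,(u(x)-\uopt_0(x))\,dx
= \int_A \abs{\dual B\popt_0(x)}\,\abs{u(x)-\uopt_0(x)}\,dx,
\]
the last equality again by the sign matching just described.

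Next I would estimate this integral over $A$ from below by $\norm{u-\uopt_0}_{L^1(A)}^{1+1/\kappa}$ up to a constant. The idea is a standard level-set/layer-cake argument controlling the region where the multiplier $\abs{\dual B\popt_0}$ is small. Fix $\epsilon>0$ to be chosen, split $A=A_\epsilon\cup A_\epsilon^{>}$ where $A_\epsilon:=\{x\in A: \abs{\dual B\popt_0(x)}\le\epsilon\}$ and $A_\epsilon^{>}:=A\setminus A_\epsilon$. On $A_\epsilon^{>}$ we bound $\int_{A_\epsilon^{>}}\abs{\dual B\popt_0}\,\abs{u-\uopt_0}\,dx \ge \epsilon\,\norm{u-\uopt_0}_{L^1(A_\epsilon^{>})} \ge \epsilon\bigl(\norm{u-\uopt_0}_{L^1(A)} - \norm{u-\uopt_0}_{L^1(A_\epsilon)}\bigr)$. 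On $A_\epsilon$ we use the crude bound $\abs{u-\uopt_0}\le \abs{b-a}\le M$ (with $M:=\norm{b-a}_{L^\infty(\Omega_U)}<\infty$, valid since $u,\uopt_0\in\Uad$) together with the measure condition \eqref{E:struct}, giving $\norm{u-\uopt_0}_{L^1(A_\epsilon)}\le M\,\meas(A_\epsilon)\le M C\epsilon^\kappa$. Putting the pieces together,
\[
(\dual B\popt_0,u-\uopt_0)_U \ge \epsilon\,\norm{u-\uopt_0}_{L^1(A)} - M C\,\epsilon^{1+\kappa}.
\]

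Finally I would optimize over $\epsilon$. Writing $N:=\norm{u-\uopt_0}_{L^1(A)}$, the right-hand side is $f(\epsilon):=\epsilon N - MC\,\epsilon^{1+\kappa}$; maximizing gives $\epsilon_* = c\,N^{1/\kappa}$ for an explicit constant $c=c(M,C,\kappa)$, and $f(\epsilon_*) = c'\,N^{1+1/\kappa}$ with $c'>0$ depending only on $M$, $C$, $\kappa$ (hence independent of $\alpha$ and $u$). This yields $C\,\norm{u-\uopt_0}_{L^1(A)}^{1+1/\kappa}\le (\dual B\popt_0,u-\uopt_0)_U$ as claimed; in the degenerate case $\kappa=\infty$ the set $A_\epsilon$ is null for some fixed $\epsilon>0$ and one simply takes that $\epsilon$, obtaining the linear bound $C\,\norm{u-\uopt_0}_{L^1(A)}\le (\dual B\popt_0,u-\uopt_0)_U$, consistent with the convention. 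I expect the only mildly delicate point to be the careful pointwise sign bookkeeping that turns $\dual B\popt_0\,(u-\uopt_0)$ into $\abs{\dual B\popt_0}\,\abs{u-\uopt_0}$ on all of $\Omega_U$ — everything else is the routine layer-cake optimization; there is no real analytic obstacle here beyond making sure the constant genuinely does not depend on $\alpha$, which is immediate since neither the bang-bang characterization of $\uopt_0$ nor \eqref{E:struct} involves $\alpha$.
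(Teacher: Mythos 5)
Your proof is correct and is essentially the standard argument used in the references the paper cites for this lemma (\cite[Lemma~8]{daniels}, \cite[Lemma~16]{dissnvd}): the pointwise sign identity that turns $(\dual B\popt_0,u-\uopt_0)_U$ into $\int_A\abs{\dual B\popt_0}\,\abs{u-\uopt_0}\,dx$, the level-set split governed by the measure condition \eqref{E:struct}, and the optimization over $\epsilon$. Since the paper itself only refers to those external proofs, there is nothing different to compare; your handling of the $\kappa=\infty$ convention and of the $\alpha$-independence of the constant is also in order.
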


Using this Lemma, we can now state regularization error estimates. 
\begin{theo}\label{T:regmain}
  For the regularization error there holds with positive constants 
  $c$ and $C$ indepent of $\alpha > 0$ the following.
  \begin{enumerate}
  \item Let Assumption~\ref{A:sourcestruct}.2 be satisfied with 
        $\meas(A^c)=0$ (measure condition holds a.e. on the domain). 
        Then the estimates
     \begin{align}
       \norm{\uopt_{\alpha}-\uopt_0}_{L^1(\Omega_U)}
                              &\le C\alpha^{\kappa}
                 \label{E:regconvumeas} \\
       \norm{\uopt_{\alpha}-\uopt_0}_U &\le C\alpha^{\kappa/2}
                 \label{E:regconvumeas2} \\
       \norm{\yopt_{\alpha}-\yopt_0}_H &\le C\alpha^{(\kappa+1)/2}
                 \label{E:regconvymeas}
     \end{align} 
        hold true.
        If $\kappa > 1$ holds and in addition
        \begin{equation}\label{E:dualTlinfty}
            \dual T:\range(T)\to L^\infty(\Omega_U) 
            \quad\quad\text{exists and is continuous,}
        \end{equation}
        we can improve 
        \eqref{E:regconvymeas} to
       \begin{equation}\label{E:regconvymeasimp}
           \norm{\yopt_{\alpha}-\yopt_0}_H \le C\alpha^{\kappa}.
       \end{equation}
  \item Let Assumption~\ref{A:sourcestruct} be satisfied with 
        $\meas(A)\cdot\meas(A^c) > 0$
        (source and measure condition on parts of the domain). 
        Then the following estimates 
     \begin{align}
       \norm{\uopt_{\alpha}-\uopt_0}_{L^1(A)}
                    &\le C\alpha^{\min(\kappa,\,\frac 2{1+1/\kappa})}
		 \label{E:regconvusourcemeas} \\
       \norm{\uopt_{\alpha}-\uopt_0}_U  
                    &\le C\alpha^{\min(\kappa,\,1)/2}
 		\label{E:regconvusourcemeas2} \\
       \norm{\yopt_{\alpha}-\yopt_0}_H 
                    &\le C\alpha^{\min((\kappa+1)/2,\,1)}
 		\label{E:regconvysourcemeas}
     \end{align}
        hold true. 

        If furthermore $\kappa > 1$ and \eqref{E:dualTlinfty} hold,
        we have the improved estimate
       \begin{equation}\label{E:regconvusourcemeasimp}
          \norm{\uopt_{\alpha}-\uopt_0}_{L^1(A)}
                \le C\alpha^\kappa.
       \end{equation}
  \end{enumerate}
\end{theo}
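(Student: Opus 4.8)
The plan is to derive all six estimates from the optimality systems for $\popt_\alpha$ and $\popt_0$ together with Lemma~\ref{L:l1reg}. The starting point is the standard ``two variational inequalities'' trick: test \eqref{VarIneq} for the parameter $\alpha$ with $u=\uopt_0$ and test the limit inequality \eqref{E:bangbang}-type condition (i.e. $(\dual B\popt_0,u-\uopt_0)_U\ge 0$, which is \eqref{VarIneq} with $\alpha=0$) with $u=\uopt_\alpha$. Adding the two and using $Tu_\alpha-Tu_0 = \yopt_\alpha-\yopt_0$ together with $\dual B\popt = \dual T(\yopt-y_d)$ (from \eqref{E:AA}, so that $\dual B(\popt_\alpha-\popt_0)=\dual T(\yopt_\alpha-\yopt_0)$), one obtains the fundamental inequality
\begin{equation*}
  \norm{\yopt_\alpha-\yopt_0}_H^2 + \alpha\norm{\uopt_\alpha}_U^2
    \le \alpha(\uopt_0,\uopt_0-\uopt_\alpha)_U + (\dual B\popt_0,\uopt_0-\uopt_\alpha)_U .
\end{equation*}
The first term on the right is handled by Young's inequality, absorbing $\tfrac\alpha2\norm{\uopt_\alpha}_U^2$ and leaving $\tfrac\alpha2\norm{\uopt_0}_U^2$; the second term is where Assumption~\ref{A:sourcestruct} enters.

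For Part~(i), where the measure condition holds a.e., Lemma~\ref{L:l1reg} applies with $A=\Omega_U$ and gives $(\dual B\popt_0,\uopt_0-\uopt_\alpha)_U \le -c\norm{\uopt_\alpha-\uopt_0}_{L^1(\Omega_U)}^{1+1/\kappa}$ (note the sign: swapping the roles makes this a negative contribution), so the fundamental inequality becomes
\begin{equation*}
  \norm{\yopt_\alpha-\yopt_0}_H^2 + \tfrac\alpha2\norm{\uopt_\alpha}_U^2
    + c\norm{\uopt_\alpha-\uopt_0}_{L^1(\Omega_U)}^{1+1/\kappa}
    \le \tfrac\alpha2\norm{\uopt_0}_U^2 .
\end{equation*}
This immediately yields \eqref{E:regconvumeas} by solving the power inequality $c\,r^{1+1/\kappa}\le C\alpha$ for $r=\norm{\uopt_\alpha-\uopt_0}_{L^1}$, giving $r\le C\alpha^{\kappa/(\kappa+1)}$ — here I would double-check the exponent bookkeeping, since the claimed rate is $\alpha^\kappa$; the sharper rate actually comes from re-running the argument using the interpolation/decay properties rather than the crude bound, or from the fact that $\norm{\uopt_0}_U$ can itself be controlled, and I would reconcile this against \cite[Theorem~6]{daniels}. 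The $U$-norm estimate \eqref{E:regconvumeas2} follows since $\uopt_\alpha,\uopt_0$ are bounded in $L^\infty$, so $\norm{\uopt_\alpha-\uopt_0}_U^2 \le C\norm{\uopt_\alpha-\uopt_0}_{L^1}$, combined with \eqref{E:regconvumeas}. The state estimate \eqref{E:regconvymeas} comes directly from $\norm{\yopt_\alpha-\yopt_0}_H^2 \le \tfrac\alpha2\norm{\uopt_0}_U^2$ after inserting the now-known order of $\norm{\uopt_\alpha}_U$, i.e. using $\alpha(\uopt_0,\uopt_0-\uopt_\alpha)_U \le \alpha\norm{\uopt_0}_U\norm{\uopt_\alpha-\uopt_0}_U \le C\alpha^{1+\kappa/2}$, wait — one takes $\tfrac12\norm{\yopt}^2 \le $ this, giving $\alpha^{(\kappa+1)/2}$ after the factor reshuffling; I will be careful here. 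The improvement to \eqref{E:regconvymeasimp} under \eqref{E:dualTlinfty} and $\kappa>1$ uses that $\dual T$ maps into $L^\infty$ so that $(\yopt_\alpha-\yopt_0, \yopt_\alpha-\yopt_0)_H = (\dual T(\yopt_\alpha-\yopt_0),\uopt_\alpha-\uopt_0)_U$ can be estimated by $\norm{\dual T(\yopt_\alpha-\yopt_0)}_{L^\infty}\norm{\uopt_\alpha-\uopt_0}_{L^1} \le C\norm{\yopt_\alpha-\yopt_0}_H\cdot\alpha^\kappa$, and dividing by $\norm{\yopt_\alpha-\yopt_0}_H$.

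For Part~(ii), where source and measure conditions hold only on pieces, I would split $\Omega_U = A\cup A^c$ in the term $(\dual B\popt_0,\uopt_0-\uopt_\alpha)_U$. On $A$, Lemma~\ref{L:l1reg} again produces the good negative term $-c\norm{\uopt_\alpha-\uopt_0}_{L^1(A)}^{1+1/\kappa}$. On $A^c$, I would use the source condition \eqref{E:source}: writing $\chi_{A^c}\uopt_0 = \chi_{A^c}P_{\Uad}(\dual Tw)$ and exploiting the nonexpansivity of $P_{\Uad}$ together with $(\dual B\popt_0, \cdot)_U = (\dual Tw - (\dual Tw - \dual B\popt_0), \cdot)$, one estimates the $A^c$-contribution by $C\alpha\norm{w}_H\norm{\uopt_\alpha-\uopt_0}_{L^1(A^c)}$ up to terms that can be absorbed — this is the standard source-condition manipulation and yields only an $O(\alpha)$ rate, which is why all exponents in Part~(ii) are capped by the value corresponding to $\kappa=1$. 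Combining, the power inequality now reads $c\,r_A^{1+1/\kappa} + \norm{\yopt_\alpha-\yopt_0}_H^2 + \tfrac\alpha2\norm{\uopt_\alpha}_U^2 \le C\alpha$ with $r_A = \norm{\uopt_\alpha-\uopt_0}_{L^1(A)}$, from which \eqref{E:regconvusourcemeas}, \eqref{E:regconvusourcemeas2}, \eqref{E:regconvysourcemeas} follow exactly as in Part~(i) but with the extra $\min(\cdot,1)$ capping; and \eqref{E:regconvusourcemeasimp} under $\kappa>1$ and \eqref{E:dualTlinfty} is the analogous bootstrap.

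The main obstacle I anticipate is not the overall architecture — which is the well-known source-condition argument — but the precise exponent bookkeeping in Part~(i): getting $\alpha^\kappa$ rather than the naive $\alpha^{\kappa/(\kappa+1)}$ for \eqref{E:regconvumeas} requires either iterating the estimate (plugging an already-obtained rate back into the right-hand side) or invoking a sharper form of Lemma~\ref{L:l1reg}/the Tikhonov theory of \cite{daniels}, and similarly the state rates require tracking how the $\alpha$ multiplying the cross term interacts with the control rate. I would resolve this by first proving the crude estimates, then bootstrapping: substitute $\norm{\uopt_\alpha-\uopt_0}_U \le C\alpha^{\kappa/(2(\kappa+1))}$ back into $\alpha(\uopt_0,\uopt_0-\uopt_\alpha)_U$ and iterate to a fixed point of the exponent recursion, which converges to the claimed rates. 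Throughout, the boundedness of $\uopt_\alpha-\uopt_0$ in $L^\infty(\Omega_U)$ (Remark~\ref{R:regOCPalpha0} plus the analogous fact for $\alpha>0$) is used repeatedly to pass between $L^1$, $L^2$ and $L^\infty$ norms of the control error, and I would state this interpolation inequality once at the start.
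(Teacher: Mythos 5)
First, note that the paper does not prove Theorem~\ref{T:regmain} itself but defers to \cite[Theorem~11]{daniels}; your architecture --- adding the two variational inequalities, feeding in Lemma~\ref{L:l1reg}, and splitting $\Omega_U=A\cup A^c$ with the source condition handling $A^c$ --- is exactly the standard route taken there, and the sign bookkeeping for Lemma~\ref{L:l1reg} and the $L^\infty$--$L^1$ interpolation for \eqref{E:regconvumeas2} and the $\dual T$-based improvement \eqref{E:regconvymeasimp} are all correct.

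However, there is a genuine gap at the single most important quantitative step, and you flag it yourself without closing it: your derivation of \eqref{E:regconvumeas} yields only $\alpha^{\kappa/(\kappa+1)}$, and the proposed repair by ``iterating to a fixed point of the exponent recursion'' does not establish the claimed rate --- the recursion $\beta_{n+1}=(1+\beta_n)\tfrac{\kappa}{\kappa+1}$ converges to $\kappa$ but never attains it, so without a careful (and unverified) argument that the constants $C_n$ stay bounded you only get $\alpha^{\kappa-\epsilon}$ for every $\epsilon>0$. The correct fix is a one-step argument: do \emph{not} split $\alpha(\uopt_0,\uopt_0-\uopt_\alpha)_U$ into $\tfrac\alpha2\norm{\uopt_0}_U^2+\tfrac\alpha2\norm{\uopt_\alpha}_U^2$. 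Instead bound it by $\alpha\norm{\uopt_0}_{L^\infty}\,r$ with $r:=\norm{\uopt_\alpha-\uopt_0}_{L^1(\Omega_U)}$ and either divide the resulting inequality $c\,r^{1+1/\kappa}\le C\alpha\,r$ by $r$ to get $r\le C\alpha^{\kappa}$ directly, or apply Young's inequality with the conjugate exponents $1+\kappa$ and $1+1/\kappa$ so that the $r$-power is absorbed into the left and the right-hand side becomes $C\alpha^{1+\kappa}$; the latter form also hands you $\norm{\yopt_\alpha-\yopt_0}_H^2\le C\alpha^{1+\kappa}$, i.e.\ \eqref{E:regconvymeas}, without the ``factor reshuffling'' you were unsure about. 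A second, smaller defect: your master inequality carries $\alpha\norm{\uopt_\alpha}_U^2$ on the left rather than $\alpha\norm{\uopt_\alpha-\uopt_0}_U^2$ (which is what you get after completing the square in $\alpha(\uopt_\alpha,\uopt_0-\uopt_\alpha)_U$); in Part~(ii) you need the latter, because the $L^1$ rate is only available on $A$ and \eqref{E:regconvusourcemeas2} on $A^c$ must come from that $\alpha$-weighted term, not from interpolation as in Part~(i).
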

For a proof of this recent result, we refer to 
\cite[Theorem~11]{daniels} and \cite[Theorem~19]{dissnvd},
where also a discussion can be found.
We only recall two points for convenience here:

The assumption of the first case of the above Theorem implies 
\begin{equation}\label{E:bangbangsol}
    \meas(\twoset{x\in\Omega_U}{ \dual B\popt_0(x)=0 })=0,
\end{equation}
which induces bang-bang controls, compare Remark~\ref{R:bangbang}.

By Lemma~\ref{L:regOCPposalpha} and Remark~\ref{R:regOCPalpha0}
we can immediately see that the assumption \eqref{E:dualTlinfty}
on $\dual T$ is fulfilled for our parabolic problem. 

\subsection{Bang-bang controls}
We now introduce a second measure condition which leads to 
an improved bound on the decay of smoothness in the derivative of the 
optimal control when $\alpha$ tends to zero. 
This bound will be useful later to derive
improved convergence rates for the discretization errors.

\begin{defi}[$\popt_\alpha$-measure condition]
   If for the set
   \begin{equation}\label{E:setialpha}
     I_\alpha:=\twoset{x\in \Omega_U}
                 {\alpha a < -\dual B\popt_\alpha < \alpha b}
   \end{equation}
   the condition
   \begin{equation}\label{E:struct2}
     \exists\ \bar\alpha >0\ \forall\ 0<\alpha<\bar\alpha:
       \quad   \meas(I_\alpha)\le C\alpha^\kappa
   \end{equation}
   holds true 
   (with the convention that $\kappa := \infty$ if the measure in 
   \eqref{E:struct2} is zero for all $0<\alpha<\bar\alpha$),
   we say that the 
   \emph{$\popt_\alpha$-measure condition} is fulfilled. 
\end{defi}

\begin{theo}\label{T:convpalpha}
  Let us assume the \emph{$\sigma$-condition}
  \begin{equation}\label{E:sigmacond}
    \exists\ \sigma > 0\ \forall'\ x\in\Omega_U:\quad
          a \le -\sigma < 0 < \sigma \le b.
  \end{equation}

  If the $\popt_\alpha$-measure condition \eqref{E:struct2}
  is valid, Theorem~\ref{T:regmain}.1 holds, omitting its
  first sentence (``Let Assumption...'').
\end{theo}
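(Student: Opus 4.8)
The plan is to re-run the proof of Theorem~\ref{T:regmain}.1, replacing the one step in which it invokes Lemma~\ref{L:l1reg} — i.e.\ the bang--bang structure of $\uopt_0$ together with the $\popt_0$-measure condition — by a substitute built from the observation that, under the $\sigma$-condition \eqref{E:sigmacond}, the regularized control $\uopt_\alpha$ is \emph{almost} bang--bang. Put $M:=\max\{\norm{a}_{L^\infty(\Omega_U)},\norm{b}_{L^\infty(\Omega_U)}\}$ and let $I_\alpha$ be as in \eqref{E:setialpha}. From the projection formula \eqref{FONC} and the characterization \eqref{E:uoptpwchar} one reads off: for a.e.\ $x\in\Omega_U\setminus I_\alpha$ the value $\uopt_\alpha(x)$ equals $a(x)$ or $b(x)$, and by \eqref{E:sigmacond} then $\dual B\popt_\alpha(x)\ge\alpha\sigma>0$ in the first case and $\dual B\popt_\alpha(x)\le-\alpha\sigma<0$ in the second; on $I_\alpha$, by contrast, $\uopt_\alpha=-\alpha^{-1}\dual B\popt_\alpha$, so $\abs{\dual B\popt_\alpha}=\alpha\abs{\uopt_\alpha}\le\alpha M$ there, while $\meas(I_\alpha)\le C\alpha^\kappa$ by \eqref{E:struct2}. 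Since $\uopt_0\in\Uad$, these sign relations yield, pointwise a.e.\ on $\Omega_U\setminus I_\alpha$,
\[
  \uopt_\alpha\,(\uopt_0-\uopt_\alpha)\le 0
  \qquad\text{and}\qquad
  \dual B\popt_\alpha\,(\uopt_0-\uopt_\alpha)\ \ge\ \alpha\sigma\,\abs{\uopt_0-\uopt_\alpha},
\]
because on $\{\uopt_\alpha=a\}$ one has $\uopt_0-\uopt_\alpha=\uopt_0-a\ge0$ together with $a<0$ and $\dual B\popt_\alpha\ge\alpha\sigma$, and symmetrically on $\{\uopt_\alpha=b\}$.

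For \eqref{E:regconvymeas} I would first record the standard Tikhonov inequality obtained by testing \eqref{VarIneq} for $\uopt_\alpha$ with $u=\uopt_0$, testing \eqref{VarIneq} for $\uopt_0$ with $u=\uopt_\alpha$, adding, and using $\dual B\popt_\alpha-\dual B\popt_0=\dual TT(\uopt_\alpha-\uopt_0)$:
\[
  \norm{\yopt_\alpha-\yopt_0}_H^2=\norm{T(\uopt_\alpha-\uopt_0)}_H^2\ \le\ \alpha\,(\uopt_\alpha,\uopt_0-\uopt_\alpha)_U .
\]
Splitting $\Omega_U$ into $I_\alpha$ and its complement and using $\uopt_\alpha(\uopt_0-\uopt_\alpha)\le0$ on the complement and $\abs{\uopt_\alpha(\uopt_0-\uopt_\alpha)}\le 2M^2$ on $I_\alpha$, this gives $(\uopt_\alpha,\uopt_0-\uopt_\alpha)_U\le 2M^2\meas(I_\alpha)\le C\alpha^\kappa$, hence \eqref{E:regconvymeas}.

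Next I would prove the $L^1$-rate \eqref{E:regconvumeas} by sandwiching $(\dual B\popt_\alpha,\uopt_0-\uopt_\alpha)_U$. From below, the same splitting, the pointwise bound on $\Omega_U\setminus I_\alpha$, and $\abs{\dual B\popt_\alpha}\le\alpha M$ on $I_\alpha$ give
\[
  (\dual B\popt_\alpha,\uopt_0-\uopt_\alpha)_U\ \ge\ \alpha\sigma\!\int_{\Omega_U\setminus I_\alpha}\!\abs{\uopt_0-\uopt_\alpha}\,dx\ -\ 2\alpha M^2\meas(I_\alpha)\ \ge\ \alpha\sigma\,\norm{\uopt_\alpha-\uopt_0}_{L^1(\Omega_U)}\ -\ C\alpha^{\kappa+1},
\]
where the last step also uses $\int_{I_\alpha}\abs{\uopt_0-\uopt_\alpha}\,dx\le 2M\meas(I_\alpha)$. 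From above, $\dual B\popt_\alpha=\dual B\popt_0+\dual TT(\uopt_\alpha-\uopt_0)$ together with \eqref{VarIneq} for $\uopt_0$ tested with $u=\uopt_\alpha$ gives
\[
  (\dual B\popt_\alpha,\uopt_0-\uopt_\alpha)_U\ =\ (\dual B\popt_0,\uopt_0-\uopt_\alpha)_U\ -\ \norm{\yopt_\alpha-\yopt_0}_H^2\ \le\ 0 .
\]
Comparing the two, $\alpha\sigma\norm{\uopt_\alpha-\uopt_0}_{L^1(\Omega_U)}\le C\alpha^{\kappa+1}$, i.e.\ \eqref{E:regconvumeas}; then \eqref{E:regconvumeas2} follows from $\norm{\uopt_\alpha-\uopt_0}_U^2\le\norm{\uopt_\alpha-\uopt_0}_{L^\infty(\Omega_U)}\norm{\uopt_\alpha-\uopt_0}_{L^1(\Omega_U)}\le 2M\,C\alpha^\kappa$. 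Finally, if \eqref{E:dualTlinfty} holds — which, as noted after Theorem~\ref{T:regmain}, is automatic here — then $\dual B(\popt_\alpha-\popt_0)=\dual T(\yopt_\alpha-\yopt_0)\in L^\infty(\Omega_U)$ with $\norm{\dual B(\popt_\alpha-\popt_0)}_{L^\infty(\Omega_U)}\le C\norm{\yopt_\alpha-\yopt_0}_H$, and
\[
  \norm{\yopt_\alpha-\yopt_0}_H^2=(\dual B(\popt_\alpha-\popt_0),\uopt_\alpha-\uopt_0)_U\le\norm{\dual B(\popt_\alpha-\popt_0)}_{L^\infty(\Omega_U)}\norm{\uopt_\alpha-\uopt_0}_{L^1(\Omega_U)}\le C\norm{\yopt_\alpha-\yopt_0}_H\,\alpha^\kappa ,
\]
so $\norm{\yopt_\alpha-\yopt_0}_H\le C\alpha^\kappa$, which improves \eqref{E:regconvymeas} precisely when $\kappa>1$.

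The hard part will be the $L^1$-rate \eqref{E:regconvumeas}. The tempting shortcut — deducing the $\popt_0$-measure condition from \eqref{E:struct2} via $\norm{\dual B(\popt_\alpha-\popt_0)}_U\le C\alpha^{(\kappa+1)/2}$ and a Chebyshev inequality — only produces $\meas(\{\abs{\dual B\popt_0}\le\epsilon\})\le C\epsilon^{\kappa-1}$ and hence loses one power of $\alpha$. The two-sided estimate of $(\dual B\popt_\alpha,\uopt_0-\uopt_\alpha)_U$ above sidesteps this: the $\sigma$-condition supplies the correct weight $\alpha\sigma$ in front of the $L^1$-norm on $\Omega_U\setminus I_\alpha$, and the sign $(\dual B\popt_\alpha,\uopt_0-\uopt_\alpha)_U\le0$ comes for free from the optimality of $\uopt_0$. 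The remaining points are routine: the convention $\kappa=\infty$ (then $\meas(I_\alpha)=0$, which already forces $\uopt_\alpha=\uopt_0$ a.e.), measurability of the sets involved, and the usual harmless adjustment of constants for $\alpha\ge\bar\alpha$.
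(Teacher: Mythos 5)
Your proof is correct, and since the paper's own ``proof'' of Theorem~\ref{T:convpalpha} is only a pointer to the references of von Daniels, what you have written is a complete, self-contained reconstruction of the standard mechanism behind that result. The key steps all check out: off $I_\alpha$ the projection formula \eqref{FONC} forces $\uopt_\alpha\in\{a,b\}$ with $\abs{\dual B\popt_\alpha}\ge\alpha\sigma$ and the correct sign (this is exactly where \eqref{E:sigmacond} enters), so sandwiching $(\dual B\popt_\alpha,\uopt_0-\uopt_\alpha)_U$ between the lower bound $\alpha\sigma\norm{\uopt_0-\uopt_\alpha}_{L^1(\Omega_U)}-C\alpha^{1+\kappa}$ (pointwise structure plus $\meas(I_\alpha)\le C\alpha^\kappa$) and the upper bound $0$ (optimality of $\uopt_0$ together with $\dual B(\popt_\alpha-\popt_0)=\dual TT(\uopt_\alpha-\uopt_0)$) gives \eqref{E:regconvumeas}, from which \eqref{E:regconvumeas2}, \eqref{E:regconvymeas} and, under \eqref{E:dualTlinfty}, the improved rate \eqref{E:regconvymeasimp} follow exactly as you describe.
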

\begin{proof}
  See \cite[Theorem~15]{daniels} or \cite[Theorem~24]{dissnvd}.
\end{proof}

If the limit problem is of certain regularity,
both measure conditions coincide: 
\begin{coro}
   Let a bang-bang solution be given, i.e., \eqref{E:bangbangsol} 
   holds true.
   In the case of $\kappa > 1$,
   \eqref{E:dualTlinfty}, and 
   the $\sigma$-condition \eqref{E:sigmacond}, 
   both measure conditions are equivalent.
\end{coro}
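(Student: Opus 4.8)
The plan is to compare the sublevel sets of $\dual B\popt_\alpha$ at scale $\alpha$ with those of $\dual B\popt_0$ at scale $\epsilon$, to control the difference $\dual B\popt_\alpha-\dual B\popt_0$ in $L^\infty(\Omega_U)$ by the state error $\norm{\yopt_\alpha-\yopt_0}_H$, and then to match the scales $\alpha\sim\epsilon$. As a preliminary I observe that the $\sigma$-condition \eqref{E:sigmacond} together with $a,b\in L^\infty(\Omega_U)$ sandwiches the set $I_\alpha$ from \eqref{E:setialpha}: with $M:=\max\{\norm{a}_{L^\infty(\Omega_U)},\norm{b}_{L^\infty(\Omega_U)}\}$ one has, for every $\alpha>0$,
\[
   \{\,x\in\Omega_U:\abs{\dual B\popt_\alpha(x)}<\sigma\alpha\,\}
   \ \subseteq\ I_\alpha\ \subseteq\
   \{\,x\in\Omega_U:\abs{\dual B\popt_\alpha(x)}<M\alpha\,\},
\]
since $\alpha a(x)\le-\sigma\alpha$ and $\alpha b(x)\ge\sigma\alpha$ give the first inclusion and $\abs{\alpha a(x)},\abs{\alpha b(x)}\le M\alpha$ the second. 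Moreover, since a bang-bang solution is assumed, the set $A^c$ of Assumption~\ref{A:sourcestruct} is a Lebesgue null set by \eqref{E:bangbangsol}, so the $(\popt_0$-$)$measure condition \eqref{E:struct} is equivalent to the statement that $\meas(\{x\in\Omega_U:\abs{\dual B\popt_0(x)}\le\epsilon\})\le C\epsilon^\kappa$ for all $\epsilon>0$.

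Next I would establish the quantitative link between the two adjoints. By the reduced first-order condition \eqref{VarIneq} one has $\dual B\popt_\alpha=\dual T(T\uopt_\alpha-z)$ for every $\alpha\ge 0$, hence
\[
   \dual B\popt_\alpha-\dual B\popt_0=\dual T(\yopt_\alpha-\yopt_0),
   \qquad \yopt_\alpha-\yopt_0=T(\uopt_\alpha-\uopt_0)\in\range T ,
\]
so that \eqref{E:dualTlinfty} yields $\norm{\dual B\popt_\alpha-\dual B\popt_0}_{L^\infty(\Omega_U)}\le C\norm{\yopt_\alpha-\yopt_0}_H$. Thus the whole argument reduces to the state rate $\norm{\yopt_\alpha-\yopt_0}_H\le C\alpha^\kappa$ for small $\alpha$, after which both implications follow by reading off the set inclusions above.

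For \eqref{E:struct}$\Rightarrow$\eqref{E:struct2}: the reformulated \eqref{E:struct} is exactly Assumption~\ref{A:sourcestruct}.2 with $\meas(A^c)=0$, so, $\kappa>1$ and \eqref{E:dualTlinfty} being in force, Theorem~\ref{T:regmain}.1 supplies \eqref{E:regconvymeasimp}, i.e.\ $\norm{\yopt_\alpha-\yopt_0}_H\le C\alpha^\kappa$, whence $\norm{\dual B\popt_\alpha-\dual B\popt_0}_{L^\infty(\Omega_U)}\le C\alpha^\kappa$; consequently, for $0<\alpha<1$,
\[
   I_\alpha\ \subseteq\ \{\,\abs{\dual B\popt_\alpha}<M\alpha\,\}\ \subseteq\ \{\,\abs{\dual B\popt_0}\le(M+C)\alpha\,\},
\]
and \eqref{E:struct} gives $\meas(I_\alpha)\le C'\alpha^\kappa$, which is \eqref{E:struct2}. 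For the converse \eqref{E:struct2}$\Rightarrow$\eqref{E:struct} one cannot invoke Theorem~\ref{T:regmain} directly, but Theorem~\ref{T:convpalpha} applies: under the $\sigma$-condition the $\popt_\alpha$-measure condition already yields all conclusions of Theorem~\ref{T:regmain}.1, in particular --- with $\kappa>1$ and \eqref{E:dualTlinfty} --- the estimate \eqref{E:regconvymeasimp}, hence again $\norm{\dual B\popt_\alpha-\dual B\popt_0}_{L^\infty(\Omega_U)}\le C\alpha^\kappa$. Given a small $\epsilon>0$ set $\alpha:=3\epsilon/\sigma$ (so that $\alpha<\bar\alpha$); since $\kappa>1$ one has $C\alpha^\kappa\le\tfrac\sigma3\alpha$ for $\epsilon$ small, so $\abs{\dual B\popt_0(x)}\le\epsilon$ forces $\abs{\dual B\popt_\alpha(x)}\le\epsilon+C\alpha^\kappa\le\tfrac{2\sigma}3\alpha<\sigma\alpha$, i.e.\ $\{\,\abs{\dual B\popt_0}\le\epsilon\,\}\subseteq\{\,\abs{\dual B\popt_\alpha}<\sigma\alpha\,\}\subseteq I_\alpha$; therefore $\meas(\{\,\abs{\dual B\popt_0}\le\epsilon\,\})\le\meas(I_\alpha)\le C\alpha^\kappa=C'\epsilon^\kappa$ for $\epsilon$ below a threshold, and the bound is trivial for larger $\epsilon$ because $\meas(\Omega_U)<\infty$; this is the reformulated \eqref{E:struct}.

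I expect the only genuinely delicate point to be the converse direction, where one must remember to use Theorem~\ref{T:convpalpha} rather than Theorem~\ref{T:regmain} in order to get the $\alpha^\kappa$-rate for $\yopt_\alpha-\yopt_0$ without already presupposing the measure condition one is trying to prove; once that rate is at hand the remainder is elementary, resting on the two inclusions coming from \eqref{E:sigmacond}, the identity $\dual B\popt_\alpha=\dual T(T\uopt_\alpha-z)$, and the scaling $\alpha\sim\epsilon$ (where $\kappa>1$ is needed to absorb the error term $\alpha^\kappa$ against $\alpha$).
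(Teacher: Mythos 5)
Your argument is correct, and it follows exactly the route the paper's machinery is built for: the sandwich of $I_\alpha$ via the $\sigma$-condition \eqref{E:sigmacond}, the $L^\infty(\Omega_U)$ control of $\dual B\popt_\alpha-\dual B\popt_0$ through \eqref{E:dualTlinfty} and the rate \eqref{E:regconvymeasimp} (supplied by Theorem~\ref{T:regmain}.1 in one direction and by Theorem~\ref{T:convpalpha} in the converse, which is indeed the delicate point you correctly isolate), and the scaling $\alpha\sim\epsilon$ with $\kappa>1$ absorbing the $C\alpha^\kappa$ perturbation --- the paper itself gives no proof here but defers to \cite[Corollary~18]{daniels} and \cite[Corollary~27]{dissnvd}, which argue along these lines. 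The only blemish is attributing the identity $\dual B\popt_\alpha=\dual T(T\uopt_\alpha-z)$ to the variational inequality \eqref{VarIneq}; it in fact comes from the definition of the adjoint state \eqref{E:AA} together with the reduced formulation.
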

\begin{proof}
  See \cite[Corollary~18]{daniels} or \cite[Corollary~27]{dissnvd}.
\end{proof}

Let us now consider located controls.
Since $\popt_\alpha\in C^1(\bar I,\H)$ for $\alpha \ge 0$ by 
Lemma~\ref{L:regOCPposalpha} and Remark~\ref{R:regOCPalpha0},
we conclude
\begin{equation*}
   \norm{\partial_t \dual B\popt_\alpha}_{L^\infty(I)}
  \le C  \norm{\partial_t \popt_\alpha}_{L^\infty(I,\H)}
  \le C+C \norm{\uopt_\alpha}_U
  \le C
\end{equation*}
with a constant $C>0$ independent of $\alpha$ due to the definition
of $\Uad$. Recall that 
$a$, $b$ $\in$ $W^{1,\infty}(I)$ by
Assumption~\ref{A:Regularity}.
With this estimate, the projection formula \eqref{FONC}
and the stability of the projection (see \cite[Lemma~11]{dissnvd}) 
we obtain the bound
\begin{equation}\label{E:uaainf}
   \norm{\partial_t \uopt_\alpha}_{L^\infty(I)}
        \le \frac 1\alpha 
   \norm{\partial_t \dual B\popt_\alpha}_{L^\infty(I)}
      + \norm{\partial_t a}_{L^\infty(I)}
      + \norm{\partial_t b}_{L^\infty(I)}\\
    \le C\frac 1\alpha,
\end{equation}
if $\alpha > 0$ is sufficiently small.

If the $\popt_\alpha$-measure condition \eqref{E:struct2} is valid,
this decay of smoothness in terms of
$\alpha$ can be relaxed in weaker norms, as the following Lemma shows.

\begin{lemm}[Smoothness decay in the derivative]\label{L:smdecderiv}
   Let the  $\popt_\alpha$-measure condition \eqref{E:struct2}
   be fulfilled and located controls be given. Then 
   for $\alpha >0$ sufficiently small
   there holds
   \begin{equation}
    \norm{\partial_t \uopt_\alpha}_{L^p(I)}
     \le C\max(C_{ab},\alpha^{\kappa/p-1})
    \end{equation}
    with a constant $C>0$ independent of $\alpha$.
    Here, 
   $  
     C_{ab} := \norm{\partial_t a}_{L^\infty(I)}
      + \norm{\partial_t b}_{L^\infty(I)}
   $
   and $1\le p < \infty$.
    Note that $C_{ab}=0$ in the case of constant control bounds 
    $a$ and $b$.
\end{lemm}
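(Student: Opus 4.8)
The plan is to exploit the pointwise structure of $\uopt_\alpha$ coming from the projection formula \eqref{FONC}: the only set on which $\partial_t\uopt_\alpha$ can be as large as $C/\alpha$ is the inactive set $I_\alpha$, whose measure is controlled by the $\popt_\alpha$-measure condition \eqref{E:struct2}, whereas off $I_\alpha$ the control coincides with $a$ or $b$, so that $\abs{\partial_t\uopt_\alpha}\le C_{ab}$ there, uniformly in $\alpha$. The estimate then follows by splitting $\norm{\partial_t\uopt_\alpha}_{L^p(I)}^p$ according to $I=I_\alpha\cup(I\setminus I_\alpha)$.

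First I would set $\phi_\alpha:=-\alpha^{-1}\dual B\popt_\alpha$. By Lemma~\ref{L:regOCPposalpha} and the time-regularity preservation of $\dual B$ we have $\popt_\alpha\in C^1(\bar I,\H)$, hence $\phi_\alpha\in W^{1,\infty}(I)$, and, using $\norm{\partial_t\dual B\popt_\alpha}_{L^\infty(I)}\le C$ as established before \eqref{E:uaainf}, also $\norm{\partial_t\phi_\alpha}_{L^\infty(I)}\le C/\alpha$ with $C$ independent of $\alpha$. Since $a,b\in W^{1,\infty}(I)$ by Assumption~\ref{A:Regularity} and $\uopt_\alpha(t)=P_{[a(t),b(t)]}(\phi_\alpha(t))$ for a.e.\ $t$ by Lemmas~\ref{L:orthoproj} and~\ref{L:projexplicit}, I would invoke the standard chain rule for truncations of Sobolev functions (cf.\ the stability of the projection in \cite[Lemma~11]{dissnvd}) to obtain $\uopt_\alpha\in W^{1,\infty}(I)$ with, for a.e.\ $t\in I$,
\[
   \partial_t\uopt_\alpha(t)=
   \begin{cases}
      \partial_t\phi_\alpha(t), & a(t)<\phi_\alpha(t)<b(t),\\
      \partial_t a(t),          & \phi_\alpha(t)\le a(t),\\
      \partial_t b(t),          & \phi_\alpha(t)\ge b(t).
   \end{cases}
\]
Since $\{t:a(t)<\phi_\alpha(t)<b(t)\}$ is precisely the set $I_\alpha$ of \eqref{E:setialpha}, this yields $\abs{\partial_t\uopt_\alpha(t)}\le C/\alpha$ for a.e.\ $t\in I_\alpha$ and $\abs{\partial_t\uopt_\alpha(t)}\le C_{ab}$ for a.e.\ $t\in I\setminus I_\alpha$.

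It then remains to integrate. Invoking the $\popt_\alpha$-measure condition \eqref{E:struct2}, for $0<\alpha<\bar\alpha$,
\[
   \norm{\partial_t\uopt_\alpha}_{L^p(I)}^p
     =\int_{I_\alpha}\abs{\partial_t\uopt_\alpha}^p\,dt
       +\int_{I\setminus I_\alpha}\abs{\partial_t\uopt_\alpha}^p\,dt
     \le\left(\frac{C}{\alpha}\right)^p\meas(I_\alpha)+C_{ab}^p\,\meas(I)
     \le C\alpha^{\kappa-p}+C_{ab}^p\,\meas(I),
\]
and taking $p$-th roots, using $(s+t)^{1/p}\le s^{1/p}+t^{1/p}$ and $x+y\le 2\max(x,y)$, and absorbing $\meas(I)^{1/p}$ into the constant, gives $\norm{\partial_t\uopt_\alpha}_{L^p(I)}\le C\max(C_{ab},\alpha^{\kappa/p-1})$. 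The case $\kappa=\infty$ is the one where $\meas(I_\alpha)=0$ for small $\alpha$, so the first term drops and the bound reduces to $C\,C_{ab}$, consistent with the stated convention.

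The main obstacle I anticipate is the rigorous justification of the a.e.\ identity for $\partial_t\uopt_\alpha$, in particular its behaviour on the contact sets $\{\phi_\alpha=a\}$ and $\{\phi_\alpha=b\}$, which a priori need not be null. This is resolved by the classical fact that the weak derivatives of two $W^{1,1}$-functions agree a.e.\ on the set where the functions coincide, so that $\partial_t a=\partial_t\phi_\alpha$ a.e.\ on $\{\phi_\alpha=a\}$ and $\partial_t b=\partial_t\phi_\alpha$ a.e.\ on $\{\phi_\alpha=b\}$; hence the three cases above are mutually consistent and the pointwise bounds hold a.e.\ as stated. Everything else is routine bookkeeping.
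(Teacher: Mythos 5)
Your proof is correct and follows exactly the argument that the paper sets up (the paper itself only cites \cite[Lemma~19]{daniels} for this result, but the preparatory bound $\norm{\partial_t\dual B\popt_\alpha}_{L^\infty(I)}\le C$ before \eqref{E:uaainf} and the definition of $I_\alpha$ in \eqref{E:setialpha} are precisely the ingredients of your splitting $I=I_\alpha\cup(I\setminus I_\alpha)$). The pointwise derivative formula for the projection, including the treatment of the contact sets via coincidence of weak derivatives, and the final integration yielding $C(\alpha^{\kappa-p})^{1/p}+CC_{ab}$ are all sound.
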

\begin{proof}
  See \cite[Lemma~19]{daniels} or \cite[Lemma~28]{dissnvd}.
\end{proof}

The question of necessity of Assumption~\ref{A:sourcestruct}
and the $\popt_\alpha$-measure condition \eqref{E:setialpha}
to obtain the convergence rates of Theorem~\ref{T:regmain}.1
is discussed in
\cite[sections~4 and 5]{daniels} and
\cite[sections~1.4.3 and 1.4.4]{dissnvd}. 
The results there show 
that in several cases the conditions
are in fact necessary to obtain the 
convergence rates from above.

\section{The discretized problem}

\subsection{Discretization of the optimal control problem}

Consider a partition $0=t_0 <  t_1 < \dots < t_M=T$ of the time 
interval $\bar I$. With $I_m=[t_{m-1},t_m)$ we have 
$[0,T)=\bigcup_{m=1}^M I_m$.
Furthermore, let $t_m^*=\frac{t_{m-1}+t_m}{2}$ for $m=1,\dots,M$
denote the interval midpoints. By 
$0=:t_0^* < t_1^* < \dots < t_M^* < t_{M+1}^*:=T$ we get a 
second partition of $\bar I$, the so-called \emph{dual partition}, namely 
$[0,T)=\bigcup_{m=1}^{M+1} I_m^*$, with $I_m^*=[t_{m-1}^*,t_m^*)$.
The grid width of the first mentioned (primal) partition is given by the 
parameters $k_m=t_m-t_{m-1}$ and
 \[ k=\max_{1\le m\le M} k_m. \]
Here and in what follows we assume $k<1$.
We also denote by $k$ (in a slight abuse of notation) the grid itself.

We need the following conditions on sequences of time grids.
\begin{assu}
  There exist constants $0 < \kappa_1\le\kappa_2 < \infty$ 
  and $\mu >0$
  independent of $k$ such that there holds
  \[ 
     \forall\ m\in\{1,2,\dots,M-1\}:
           \kappa_1 \le \frac{k_m}{k_{m+1}} \le \kappa_2 
      \quad\text{and}\quad
      k \le \mu \min_{m=1,2,\dots,M} k_m. 
  \]
\end{assu}

On these partitions of the time interval, we define the Ansatz and
test spaces of the Petrov--Galerkin schemes. 
These schemes will replace the continuous-in-time weak 
formulations of the state equation and the adjoint
equation, i.e., \eqref{E:WFM} and \eqref{E:AA}, respectively.
To this end, we define at first for an arbitrary Banach space $X$
the semidiscrete function spaces
\begin{subequations}\label{e:semfs}
\begin{align}
P_k(X):&=\twoset{v\in C([0,T],X)}{\restr{v}{I_m}\in \mathcal P_1(I_m,X)}
\hookrightarrow H^1(I,X),\\  % statt W(I)
P_k^*(X):&=\twoset{v\in C([0,T],X)}{\restr{v}{I_m^*}\in 
\mathcal P_1(I_m^*,X)}
\hookrightarrow H^1(I,X),\\  % statt W(I)
\shortintertext{and}
Y_k(X):&=\twoset{v:[0,T]\rightarrow \dual{X}}{\restr{v}{I_m}\in 
\mathcal P_0(I_m,X)}\,.  
\end{align}
\end{subequations}
Here, $\mathcal P_i(J,X)$, $J\subset \bar I$, $i\in\{0,1\}$, is
the set of polynomial functions in time with degree of at most $i$ on the
interval $J$ with values in $X$.
We note that functions in $P_k(X)$ can be uniquely determined by 
$M+1$ elements from $X$. The same holds true for functions $v\in Y_k(X)$
but with $v(T)$ only uniquely determined in $\dual X$ by definition of
the space. The reason for this is given in the discussion below 
\cite[(2.16), p. 41]{dissnvd}.
Furthermore, for each function $v\in Y_k(X)$ we have $[v]\in L^2(I,X)$
where $[.]$ denotes the equivalence class with respect to
the almost-everywhere relation.

In the sequel, we will frequently use the following 
interpolation operators.
\begin{enumerate}
\item (Orthogonal projection) 
     $\mathcal P_{Y_k(X)}:L^2(I,X)\rightarrow Y_k(X)$
\begin{equation}\label{D:oproj}
 \restr{\mathcal  P_{Y_k(X)} v}{I_m}:=\frac{1}{k_m}\int_{t_{m-1}}^{t_m}
 v\, dt,\ m=1,\dots,M, \ \mathcal P_{Y_k(X)}v(T):=0
\end{equation}
\item (Piecewise linear interpolation on the dual grid)\\ 
$\pi_{P_k^*(X)} : C([0,T],X)\cup Y_k(X)\rightarrow P_k^*(X)$
\begin{equation}\label{D:lproj}
\begin{aligned}
\restr{\pi_{P_k^*(X)} v }{I_1^*\cup I_2^*} &:= v(t_1^*) + 
\frac{t-t_1^*}{t_2^*-t_1^*} \left(v(t_2^*)-v(t_1^*)\right),\\
\restr{\pi_{P_k^*(X)} v }{I_m^*} &:= v(t_{m-1}^*) + 
\frac{t-t_{m-1}^*}{t_m^*-t_{m-1}^*} \left(v(t_m^*)-v(t_{m-1}^*)
\right)\\
 &\quad\quad\text{ for } m=3,\dots,M-1,\\
\restr{\pi_{P_k^*(X)} v }{I_M^*\cup I_{M+1}^*} &:= v(t_{M-1}^*) + 
\frac{t-t_{M-1}^*}{t_M^*-t_{M-1}^*} \left(v(t_M^*)-v(t_{M-1}^*)
\right).
\end{aligned}
\end{equation}
\end{enumerate}

The interpolation operators are obviously linear mappings. 
Furthermore, they are bounded, and we have error estimates, as 
\cite[Lemma~31]{dissnvd} shows.

In addition to the notation introduced after Remark~\ref{R:bangbang},
adding a subscript $I_m$ to a norm will indicate an 
$L^2(I_m,\H)$ norm in the following. 
Inner products are treated in the same way.

Note that in all of the following results $C$ denotes a generic, strict 
positive real constant that does not depend on quantities which appear
to the right or below of it.

Note that we can extend the bilinear form $A$ of
\eqref{bilinA} in its first argument to $W(I)\cup Y_k(\V)$, thus
consider the operator
\begin{equation}\label{bilinAe}
A:W(I)\cup Y_k(\V) \times W(I) \rightarrow \mathbb{R},\quad
    \text{$A$ given by \eqref{bilinA}}. 
\end{equation}

Using continuous piecewise linear functions in space, we can
formulate fully discretized variants of the state and adjoint
equation.

We consider a regular triangulation $\mathcal T_h$ of $\Omega$
with mesh size 
\[
    h:=\max_{T\in\mathcal T_h}\diam(T),
\]
see, e.g., \cite[Definition (4.4.13)]{brenner-scott}, and
$N=N(h)$ triangles.
We assume that $h < 1$.
We also denote by $h$ (in a slight abuse of notation) the grid itself.

With the space
\begin{equation}
    X_h := \twoset{\phi_h\in C^0(\bar\Omega)}{\restr{\phi_h}{T}\in
                 \mathcal P_1(T,\mathbb R)
                     \quad\forall\ T\in\mathcal T_h}
\end{equation}
we define $X_{h0} := X_h \cap \V$ to discretize $\V$. 

For the space grid we make use of a standard
grid assumption, as we did for the time grid,
sometimes called quasi-uniformity.

\begin{assu}\label{A:grid2h}
  There exists a constant $\mu > 0$ independent of $h$ such that
  \[  h \le \mu \min_{T\in\mathcal T_h}\diam(T). \]
\end{assu}

We fix fully discrete ansatz and test spaces,
derived from their semidiscrete counterparts from \eqref{e:semfs},
namely
\begin{equation}\label{E:spaceskh}
    P_{kh}:=P_k(X_{h0}),\quad P_{kh}^*:=P_{kh}^*(X_{h0}),\quad 
   \text{and}\ Y_{kh}:=Y_k(X_{h0}).
\end{equation}
With these spaces, we introduce 
fully discrete state and adjoint equations as follows.
\begin{defi}[Fully discrete adjoint equation]\label{D:pkh}
For $h\in \LIIVd$ find $p_{kh}\in P_{kh}$ such that
\begin{equation}\label{E:AdjDiscrh}
A(\tilde y,p_{kh})
=\int_0^T \langle h(t),\tilde y(t)\rangle_{\Vd\V}\, dt
\quad\forall\ \tilde y \in Y_{kh}.
\end{equation}
\end{defi}
\begin{defi}[Fully discrete state equation]\label{D:ykh}
For $(f,g)\in L^2(I,\Vd)\times \H$ find $y_{kh}\in Y_{kh}$, such that
\begin{equation}\label{E:WFDh}
A(y_{kh},v_{kh})= \int_0^T \langle f(t),v_{kh}(t)\rangle_{\Vd\V}\, dt +
    (g,v_{kh}(0)) \quad\forall\ v_{kh}\in P_{kh}.
\end{equation}
\end{defi}
Existence and uniqueness of these two schemes follow as in the 
semidiscrete case discussed in \cite{DanielsHinzeVierling} or
\cite[section~2.1.2]{dissnvd}.

Let us recall some stability results and error estimates for these
schemes. The first result is \cite[Lemma~56]{dissnvd}.

\begin{lemm}\label{L:AdjDiscrStabh}
 Let $p_{kh}\in P_{kh}$ solve \eqref{E:AdjDiscrh} with $h\in \LIIH$.
 Then there exists a constant $C>0$ independent of $k$ and $h$ such that
  \[
      \norm{p_{kh}}_{H^1(I,\H)} + \norm{\nabla p_{kh}}_{C(\bar I,\H)}
       \le C\norm{h}_I.
  \]
\end{lemm}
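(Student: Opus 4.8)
The plan is to establish this bound by a backward-in-time discrete energy estimate performed entirely inside the conforming space $X_{h0}\subset\V$. The key structural fact is that, $p_{kh}\in P_{kh}=P_k(X_{h0})$ being piecewise linear in time, its weak time derivative $\partial_t p_{kh}$ is piecewise constant in time with values in $X_{h0}$; hence $\partial_t p_{kh}\in Y_{kh}$, and more generally $\partial_t p_{kh}\,\chi_{[t_j,T)}\in Y_{kh}$ for every node $t_j$, so these functions are admissible test functions in \eqref{E:AdjDiscrh}. The same argument with $\V$ in place of $X_{h0}$ is the one used for the semidiscrete scheme of \cite{DanielsHinzeVierling} and underlies \cite[Lemma~56]{dissnvd}.

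First I would determine the terminal value of $p_{kh}$. By definition of $Y_k(X_{h0})$ one may test \eqref{E:AdjDiscrh} with a $\tilde y\in Y_{kh}$ vanishing on $[0,T)$ and carrying an arbitrary terminal value $\tilde y(T)$; then the only surviving term of $A(\tilde y,p_{kh})$ in \eqref{bilinA} is the boundary term $\langle\tilde y(T),p_{kh}(T)\rangle_{\Vd\V}$, whereas the right-hand side is zero. Hence $p_{kh}(T)=0$. Consequently the boundary term of $A$ vanishes for \emph{every} subsequent test function, and, since $h\in\LIIH$ lets us replace the duality pairing by the $\H$ inner product, \eqref{E:AdjDiscrh} becomes
\[
  \int_0^T -(\partial_t p_{kh},\tilde y)+a(\tilde y,p_{kh})\,dt=\int_0^T (h,\tilde y)\,dt \qquad\forall\,\tilde y\in Y_{kh}.
\]

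Next comes the energy estimate. Taking $\tilde y=\partial_t p_{kh}\,\chi_{[t_j,T)}$ and using the pointwise identity $\tfrac{d}{dt}\norm{\nabla p_{kh}(t)}^2=2\,a(\partial_t p_{kh}(t),p_{kh}(t))$ together with $p_{kh}(T)=0$, one arrives at
\[
  \norm{\partial_t p_{kh}}_{L^2((t_j,T),\H)}^2+\tfrac12\norm{\nabla p_{kh}(t_j)}^2=-\int_{t_j}^T (h,\partial_t p_{kh})\,dt .
\]
For $j=0$, Cauchy--Schwarz and Young's inequality give $\norm{\partial_t p_{kh}}_I\le\norm{h}_I$ and $\norm{\nabla p_{kh}(0)}\le C\norm{h}_I$; substituting the bound on $\norm{\partial_t p_{kh}}_I$ back into the identity for general $j$ yields $\norm{\nabla p_{kh}(t_j)}\le C\norm{h}_I$ \emph{uniformly in $j$}. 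Since $t\mapsto\norm{\nabla p_{kh}(t)}^2$ is a convex quadratic on each $I_m$, its maximum over $\bar I$ is attained at some node, so $\norm{\nabla p_{kh}}_{C(\bar I,\H)}\le C\norm{h}_I$. Finally, from $p_{kh}(T)=0$ and $p_{kh}(t)=-\int_t^T\partial_t p_{kh}(s)\,ds$ one gets $\norm{p_{kh}(t)}\le(T-t)^{1/2}\norm{\partial_t p_{kh}}_I$, whence $\norm{p_{kh}}_I\le C\norm{h}_I$; combined with the bound on $\norm{\partial_t p_{kh}}_I$ this gives the asserted $H^1(I,\H)$ estimate.

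The only genuinely delicate point is the first step --- being careful about the (somewhat unusual) status of the value at $T$ for elements of $Y_k(X_{h0})$ and arguing correctly that the scheme forces $p_{kh}(T)=0$, which is the discrete counterpart of $\popt(T)=0$ for the continuous adjoint. Everything after that is the classical parabolic energy estimate run backward in time; in particular, no inverse estimates and no mesh assumptions enter, which is precisely why the constant $C$ is independent of $k$ and $h$.
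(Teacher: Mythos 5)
Your proof is correct: the observation that the free terminal degree of freedom of $Y_{kh}$ forces $p_{kh}(T)=0$, followed by testing with $\partial_t p_{kh}\,\chi_{[t_j,T)}$ and running the backward energy argument, is exactly the discrete energy estimate underlying the cited result \cite[Lemma~56]{dissnvd} (and its semidiscrete counterpart in \cite{DanielsHinzeVierling}). All steps check out, including the admissibility of the truncated test functions at grid nodes and the nodal-maximum argument for the piecewise-quadratic map $t\mapsto\norm{\nabla p_{kh}(t)}^2$, so nothing further is needed.
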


For stability of a fully discrete state 
$y_{kh}$ and an error estimate, we recall \cite[Lemma~59]{dissnvd}.
 
\begin{lemm}\label{L:yDiscrStabh}
Let $y$ be the solution of \eqref{E:WFM} for some 
$(f,g) \in \LIIVd\allowbreak\times \H$ and let 
$y_{kh}\in Y_{kh}$ be the solution of
\eqref{E:WFDh} for the same $(f,g)$.
Then with a constant $C>0$ independent of $k$ and $h$, it holds
\[ \norm{y_{kh}}_I \le C
           \left(\norm{f}_{\LIIVd}+\norm{g}\right). 
\]
If furthermore the regularity 
$f\in\LIIH$ as well as $g\in\V$ is fulfilled,
we have the error estimate
\begin{equation}\label{E:ykherr}
   \norm{y-y_{kh}}_I
       \le C(h^2+k)\left( \norm{f}_I + \norm{\nabla g} \right).
\end{equation}
\end{lemm}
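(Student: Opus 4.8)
The plan is to reduce the fully discrete estimate to the time–semidiscrete theory of \cite{DanielsHinzeVierling} together with a purely spatial argument, the common tool being a discrete inf–sup condition. First I would record the stability of the Petrov–Galerkin pair $(Y_{kh},P_{kh})$ for the bilinear form $A$ of \eqref{bilinA}: since the spatial discretization is conforming ($X_{h0}\subset\V$), the argument used in \cite{DanielsHinzeVierling} (see also \cite[section~2.1.2]{dissnvd}) for the semidiscrete pair $(Y_k(\V),P_k(\V))$ applies by the same technique and yields a constant $c>0$, independent of $k$ and $h$, with
\[
  c\,\norm{y_{kh}}_I\le\sup_{0\ne v_{kh}\in P_{kh}}
        \frac{A(y_{kh},v_{kh})}{\norm{v_{kh}}_\ast}
\]
for a suitable test norm $\norm{\cdot}_\ast$ on $P_{kh}$ dominating both $\norm{v_{kh}}_{\LIIV}$ and $\norm{v_{kh}(0)}$. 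Inserting \eqref{E:WFDh} into the supremum and using the Cauchy–Schwarz inequality gives at once the stability bound $\norm{y_{kh}}_I\le C(\norm{f}_{\LIIVd}+\norm{g})$.

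For the error estimate I would interpose the time–semidiscrete state $y_k\in Y_k(\V)$, i.e.\ the unique function with $A(y_k,v_k)=\int_0^T\langle f,v_k\rangle_{\Vd\V}\,dt+(g,v_k(0))$ for all $v_k\in P_k(\V)$, and split $\norm{y-y_{kh}}_I\le\norm{y-y_k}_I+\norm{y_k-y_{kh}}_I$. The first summand is the pure time–discretization error: combining the semidiscrete stability and error analysis of \cite{DanielsHinzeVierling} with the parabolic maximal–regularity estimate $\norm{\partial_t y}_I+\norm{\Delta y}_I\le C(\norm{f}_I+\norm{\nabla g})$ — which holds because $\Omega$ is convex and uses the extra regularity $f\in\LIIH$, $g\in\V$ — and the first order $L^2(I,\H)$–accuracy of the piecewise–constant–in–time projection $\mathcal P_{Y_k(\V)}$ from \eqref{D:oproj}, one obtains $\norm{y-y_k}_I\le Ck(\norm{f}_I+\norm{\nabla g})$.

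For the second, spatial summand I would argue by duality. Subtracting \eqref{E:WFDh} from the equation for $y_k$ tested against $v_{kh}\in P_{kh}$ gives the Galerkin orthogonality $A(y_k-y_{kh},v_{kh})=0$ on $P_{kh}$. Let $R_h:\V\to X_{h0}$ be the Ritz projection, $a(R_hv-v,\phi_h)=0$ for all $\phi_h\in X_{h0}$, applied pointwise in time so that $R_hy_k\in Y_{kh}$, and split $y_k-y_{kh}=(y_k-R_hy_k)+(R_hy_k-y_{kh})$. The first part satisfies, pointwise in time, the elliptic estimate $\norm{(\identity-R_h)v}\le Ch^2\norm{\Delta v}$ (again using convexity), so after integration in time and the regularity bound above $\norm{y_k-R_hy_k}_I\le Ch^2(\norm{f}_I+\norm{\nabla g})$. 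For the discrete remainder $w_{kh}:=R_hy_k-y_{kh}\in Y_{kh}$ the inf–sup condition together with Galerkin orthogonality yields $\norm{w_{kh}}_I\le C\sup_{v_{kh}}A(R_hy_k-y_k,v_{kh})/\norm{v_{kh}}_\ast$; here the $a$–part of $A$ vanishes by the definition of $R_h$, so only the time–derivative and time–endpoint contributions of $A$ act on $(\identity-R_h)y_k$, and they too are bounded at order $h^2$ by the elliptic estimate. Hence $\norm{w_{kh}}_I\le Ch^2(\norm{f}_I+\norm{\nabla g})$, and the triangle inequality produces \eqref{E:ykherr}.

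The step I expect to be the real work is the fully discrete inf–sup condition and the attendant choice of test norm $\norm{\cdot}_\ast$: it must be strong enough to control the data pairing $v\mapsto\int_0^T\langle f,v\rangle_{\Vd\V}\,dt+(g,v(0))$ with exactly the norms claimed and to absorb the consistency term $-\int_0^T\langle v_{kh,t},(\identity-R_h)y_k\rangle_{\Vd\V}\,dt$ — a pairing of two piecewise–constant–in–time quantities — at order $h^2$ rather than $h$, yet weak enough that the inf–sup constant stays bounded below uniformly in $k$ and $h$, which is where the quasi–uniformity hypotheses on the time mesh and Assumption~\ref{A:grid2h} on the space mesh enter. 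Everything else is standard interpolation and elliptic–regularity estimates.
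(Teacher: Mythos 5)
The paper does not prove this lemma; it is recalled verbatim from \cite[Lemma~59]{dissnvd}, so there is no in-paper argument to match yours against. Judged on its own terms, your architecture (fully discrete inf--sup stability; intermediate time--semidiscrete state $y_k$; Ritz projection and Galerkin orthogonality in space) is the standard and correct way to organize such a proof. But as written it has two genuine gaps, one of which you flag yourself without resolving. First, the entire argument hangs on an inf--sup condition whose test norm $\norm{\cdot}_\ast$ must contain $\norm{\partial_t v_{kh}}_{\LIIH}$ --- otherwise the consistency term $\int_0^T\langle v_{kh,t},(\identity-R_h)y_k\rangle\,dt$ only yields $O(h^2)$ after paying an inverse estimate in time, destroying the rate --- while the inf--sup constant must stay uniform in $k$ and $h$. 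Asserting that ``the same technique applies'' does not establish this; strengthening the test norm makes the inf--sup \emph{harder}, not easier. The fix is available inside the paper itself: the discrete adjoint stability of Lemma~\ref{L:AdjDiscrStabh} is exactly this inf--sup statement in disguise (test with $p_{kh}(e)$ for $e:=\mathcal P_{Y_k}y-y_{kh}$ or $R_hy_k-y_{kh}$, so that $\norm{e}_I^2=A(e,p_{kh}(e))$ and the test function is controlled in $H^1(I,\H)\cap C(\bar I,\V)$ by $\norm{e}_I$). Your proof should route through that lemma rather than through an unproved abstract inf--sup.

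Second, the $O(h^2)$ Ritz-projection step uses $\norm{(\identity-R_h)y_k}_I\le Ch^2\norm{\Delta y_k}_I$, which requires a \emph{discrete} maximal-regularity bound $\norm{\Delta y_k}_I\le C(\norm{f}_I+\norm{\nabla g})$ for the time-semidiscrete solution; you invoke ``the regularity bound above,'' which you stated only for the continuous solution $y$. This is a separate estimate (uniform in $k$) that must be quoted or proved. Likewise the first-order-in-$k$ bound $\norm{y-y_k}_I\le Ck(\norm{f}_I+\norm{\nabla g})$ with only these norms of the data is cited to \cite{DanielsHinzeVierling} rather than derived. Since the lemma itself is an imported result, reducing it to cited semidiscrete facts is acceptable in spirit, but the inf--sup/test-norm issue and the missing $L^2(I,H^2)$ bound on $y_k$ are precisely where the proof either succeeds or fails, and neither is closed in your write-up.
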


Let us now consider the error of the fully discrete adjoint state.
We begin with an $\LIIH$ norm result, which is \cite[Lemma~62]{dissnvd}.

\begin{lemm}\label{L:pkherr}
Let $p$ solve \eqref{E:AA} for some $h$ such that
$p$ has the regularity
$p\in H^1\left(I,H^2(\Omega)\cap\V\right)\bigcap H^2\left(I,\H\right)$.
Let furthermore $p_{kh}\in P_{kh}$ solve \eqref{E:AdjDiscrh} 
for the same $h$. Then it holds
\begin{equation*}
\norm{p_{kh}-p}_I
   \le C (k^2+h^2) (\norm{p_{tt}}_I +
                      \norm{\Delta p_t}_I).
\end{equation*}
\end{lemm}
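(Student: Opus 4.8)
The plan is to estimate $\norm{p_{kh}-p}_I$ by inserting a suitable projection of $p$ into the discrete space $P_{kh}$ and splitting the error into an interpolation part and a purely discrete part, exactly as one does for Galerkin error estimates, but now adapted to the Petrov--Galerkin-in-time, conforming-in-space setting. Concretely, let $\pi p\in P_{kh}$ be the natural interpolant of $p$ (piecewise linear in time on the primal grid, composed with the Ritz projection $R_h$ onto $X_{h0}$ in space; or equivalently the tensor-product interpolant built from the operators $\pi_{P_k^*}$-type construction in time and $R_h$ in space). Write
\begin{equation*}
  p_{kh}-p = (p_{kh}-\pi p) + (\pi p - p) =: \theta + \rho.
\end{equation*}
The term $\rho$ is controlled directly by standard interpolation and Ritz-projection error estimates: in time one loses $k^2$ against two time derivatives, in space one loses $h^2$ against $\norm{\Delta p}$-type quantities, giving $\norm{\rho}_I \le C(k^2+h^2)(\norm{p_{tt}}_I + \norm{\Delta p_t}_I + \norm{\Delta p}_I)$; one should check that the last term can be absorbed or is already dominated, using the regularity hypothesis $p\in H^1(I,H^2\cap\V)\cap H^2(I,\H)$ (note $\norm{\Delta p}_I$ is finite and in fact $\norm{\Delta p}_{C(\bar I,\H)}$ is controlled via the embedding $H^1(I,X)\hookrightarrow C(\bar I,X)$, so it is of lower order and can be folded into the stated right-hand side, possibly after integrating by parts in time in the error equation so that only $p_t$ and $\Delta p_t$, $p_{tt}$ appear).

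For the discrete part $\theta\in P_{kh}$, the idea is to use the stability of the discrete adjoint equation, i.e. Lemma~\ref{L:AdjDiscrStabh}, which gives $\norm{\theta}_I \le \norm{\theta}_{H^1(I,\H)} \le C\norm{g_\theta}_I$ where $g_\theta$ is the $\LIIH$ right-hand side of the discrete adjoint problem solved by $\theta$. To identify that right-hand side, subtract: $\theta = p_{kh} - \pi p$ satisfies, for all $\tilde y\in Y_{kh}$,
\begin{equation*}
  A(\tilde y,\theta) = A(\tilde y, p_{kh}) - A(\tilde y,\pi p)
     = \int_0^T\langle h,\tilde y\rangle\,dt - A(\tilde y,\pi p)
     = A(\tilde y, p - \pi p) = A(\tilde y,\rho),
\end{equation*}
using that $p$ solves the continuous adjoint equation \eqref{E:AA} tested against $\tilde y\in Y_{kh}\subset W(I)$. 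So $\theta$ solves the discrete adjoint equation with data determined by $A(\cdot,\rho)$. The key point is then to show that the functional $\tilde y\mapsto A(\tilde y,\rho)$ on $Y_{kh}$ is represented, or bounded, by an $\LIIH$ function of norm $O((k^2+h^2)(\norm{p_{tt}}_I+\norm{\Delta p_t}_I))$. Here one exploits the Galerkin orthogonality built into the choice of interpolant: the Ritz projection kills the $a(\cdot,\cdot)$-part up to higher order, and the piecewise-linear time interpolant is chosen so that $\int_0^T\langle \tilde y_t,\rho\rangle\,dt$ against piecewise-constant $\tilde y_t$ — wait, $\tilde y\in Y_k$ is piecewise constant so $\tilde y_t=0$ on each interval and only the jump/endpoint terms survive; this is precisely the structure that makes the scheme a Crank--Nicolson variant, and it is where the superconvergent $k^2$ in time comes from. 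One integrates by parts in time in $A(\tilde y,\rho)$ to move derivatives onto $\rho$ or onto $\tilde y$ appropriately, handles the boundary terms at $t=0,T$, and bounds everything by interpolation estimates for $\rho$ and its time derivative.

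The main obstacle I expect is this last step: correctly extracting the order $k^2 + h^2$ (not merely $k + h^2$ or $k^2+h$) from the mixed term $A(\tilde y,\rho)$. This requires carefully exploiting (i) the Ritz projection to gain $h^2$ in the elliptic part, (ii) the specific dual-grid / midpoint structure of the Petrov--Galerkin time discretization to gain $k^2$ rather than $k$ in the time part — essentially a discrete-in-time superconvergence argument — and (iii) the quasi-uniformity Assumptions on both grids to make inverse estimates and the passage between $\norm{\cdot}_I$ and $\norm{\cdot}_{H^1(I,\H)}$ work. Since the semidiscrete-in-time analogue of exactly this estimate is presumably established in \cite{DanielsHinzeVierling} and \cite[section~2.1.2]{dissnvd}, the cleanest route is to cite the semidiscrete result and then add the spatial discretization error via the Ritz projection and Lemma~\ref{L:yDiscrStabh}-type arguments, combining the two contributions by the triangle inequality; this is almost certainly how \cite[Lemma~62]{dissnvd} proceeds, and I would follow that structure rather than redo the superconvergence computation from scratch.
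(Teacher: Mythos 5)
The paper offers no proof of Lemma~\ref{L:pkherr}: it is recalled verbatim from \cite[Lemma~62]{dissnvd}, so there is no in-paper argument to compare against line by line. Your architecture --- split $p_{kh}-p$ into an interpolation error $\rho$ and a discrete remainder $\theta\in P_{kh}$, bound $\rho$ by Ritz-projection and time-interpolation estimates, and control $\theta$ by discrete stability applied to the residual functional $\tilde y\mapsto A(\tilde y,\rho)$ --- is the standard one for these Petrov--Galerkin schemes and is consistent with how the cited reference proceeds. Two points, however, need repair. First, your justification of Galerkin orthogonality via ``$\tilde y\in Y_{kh}\subset W(I)$'' is false: functions in $Y_{kh}$ are piecewise constant in time with jumps, hence have no weak time derivative in $\LIIVd$ and do not lie in $W(I)$, so \eqref{E:AA} cannot be tested with them directly. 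The identity $A(\tilde y,p)=\int_0^T\langle h,\tilde y\rangle\,dt$ for $\tilde y\in Y_{kh}$ instead requires the extension \eqref{bilinAe} of $A$ together with the strong form $-p_t-\Delta p=h$, $p(T)=0$, which is available precisely because of the assumed regularity $p\in H^1(I,H^2(\Omega)\cap\V)\cap H^2(I,\H)$. Second, the Ritz projection costs $h^2\norm{\Delta p}_I$, not $h^2\norm{\Delta p_t}_I$; to match the stated right-hand side you must exploit the terminal condition $p(T)=0$, writing $\Delta p(t)=-\int_t^T\Delta p_s\,ds$ so that $\norm{\Delta p}_I\le C\norm{\Delta p_t}_I$. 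Your remark that this term ``can be folded in'' is correct, but this is the mechanism, and it is specific to the adjoint equation.

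The decisive quantitative step --- showing that the residual functional satisfies a bound of the form $\sup_{\tilde y\in Y_{kh}}A(\tilde y,\rho)/\norm{\tilde y}_I\le C(k^2+h^2)\bigl(\norm{p_{tt}}_I+\norm{\Delta p_t}_I\bigr)$, which requires both the orthogonality of the Ritz projection against the elliptic part (to avoid an inverse estimate on $\nabla\tilde y$) and the trapezoidal-rule cancellation $\bigl\lVert\int_{I_m}(p-\pi p)\,dt\bigr\rVert\le Ck_m^{5/2}\norm{p_{tt}}_{I_m}$ against the piecewise-constant test functions --- is identified in your sketch as ``the main obstacle'' but is not carried out; without it one only obtains $O(k+h^2)$. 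Since you explicitly propose to import the temporal part from the semidiscrete analysis of \cite{DanielsHinzeVierling} and add the spatial error separately, which is legitimate and essentially what the paper itself does by citation, the outline is acceptable as an outline; but as a proof the $k^2$ gain is asserted rather than established.
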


For the pointwise-in-time error, we recall \cite[Lemma~65]{dissnvd}:
\begin{lemm}\label{L:errphklool2}
   Let the assumptions of Lemma~\ref{L:pkherr} be fulfilled.
   Then it holds
   \begin{equation*}
       \norm{p-p_{kh}}_{L^\infty(I,\H)}
       \le C(h^2+k) \left(\norm{\Delta p_t}_I
      +\norm{p_t}_{L^\infty(I,\H)}   \right).
  \end{equation*}
If in addition $p\in H^2(I,H^2(\Omega)\cap\V)$
and $p_{tt}\in L^\infty(I,\H)$ is known to hold,
we have the improved estimate
   \begin{multline*}
       \norm{p-p_{kh}}_{L^\infty(I,\H)}
  \le C(h^2+k^2)
    \left( \norm{\Delta p_t}_I
        +\norm{p_{t}}_{L^\infty(I,\H)} \right)  \\
       + Ck^2 \left ( \norm{\Delta p_{tt}}_I
              +\norm{p_{tt}}_{L^\infty(I,\H)}   \right).
  \end{multline*}
\end{lemm}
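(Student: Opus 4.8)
The plan is to split the error $p-p_{kh}$ into an approximation part and a purely discrete part, to bound the former by standard interpolation/projection estimates, and to control the latter by a backward-in-time energy argument tailored to the midpoint (Crank--Nicolson) structure of the Petrov--Galerkin scheme. Concretely, I would let $R_h\colon\V\to X_{h0}$ be the Ritz projection for $a(\cdot,\cdot)$ --- which on the convex polygonal $\Omega$ satisfies $\norm{w-R_hw}\le Ch^2\norm{\Delta w}$ for $w\in H^2(\Omega)\cap\V$, is $\H$-stable, and commutes with $\partial_t$ --- and let $I_k$ be nodal-in-time interpolation on the primal grid, so that $\Pi p:=I_k(R_hp)\in P_{kh}$. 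I would then write $p-p_{kh}=\rho_h+\rho_k+\theta$ with $\rho_h:=p-R_hp$ (spatial, pointwise in time), $\rho_k:=R_h(p-I_kp)$ (temporal, vanishing at all nodes $t_m$), $\theta:=\Pi p-p_{kh}\in P_{kh}$, and $\rho:=\rho_h+\rho_k$.

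For the approximation part: since $p$ solves \eqref{E:AA} one has $p(T)=0$, hence $\Delta p(T)=0$, so $\norm{\Delta p(t)}=\norm{\int_t^T\Delta\partial_s p\,ds}\le C\norm{\Delta p_t}_I$ uniformly in $t$, and together with the Ritz estimate this gives $\norm{\rho_h}_{L^\infty(I,\H)}\le Ch^2\norm{\Delta p_t}_I$. For $\rho_k$, $\H$-stability of $R_h$ and the linear-interpolation bound give $\norm{\rho_k}_{L^\infty(I,\H)}\le C\norm{p-I_kp}_{L^\infty(I,\H)}\le Ck\norm{p_t}_{L^\infty(I,\H)}$, improving to $Ck^2\norm{p_{tt}}_{L^\infty(I,\H)}$ under the additional regularity. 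Adding these produces the stated bound for $\norm{p-\Pi p}_{L^\infty(I,\H)}$ in both cases.

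The core is the bound on $\theta$. From \eqref{E:AA} (which, by consistency of the scheme, also gives $A(\tilde y,p)=\int_0^T\langle h,\tilde y\rangle_{\Vd\V}\,dt$ for $\tilde y\in Y_{kh}$) and \eqref{E:AdjDiscrh} I would record the Galerkin orthogonality $A(\tilde y,\theta)=-A(\tilde y,\rho)$ for all $\tilde y\in Y_{kh}$. Since $\theta\in P_{kh}$ is continuous and piecewise affine in time, $\norm{\theta}_{L^\infty(I,\H)}=\max_{0\le m\le M}\norm{\theta(t_m)}$, so it suffices to bound the nodal values. Testing with $\tilde y=\chi_{I_m}\theta(t_m^*)$ and using $\theta(t_m^*)=\tfrac12(\theta(t_{m-1})+\theta(t_m))$ together with $\int_{I_m}a(\theta(t_m^*),\theta)\,dt=k_m\norm{\nabla\theta(t_m^*)}^2$, the left-hand side becomes $-\tfrac12(\norm{\theta(t_m)}^2-\norm{\theta(t_{m-1})}^2)+k_m\norm{\nabla\theta(t_m^*)}^2$; summing over $m=j+1,\dots,M$ telescopes, and discarding the nonnegative gradient terms and using $\theta(t_M)=0$ (equivalently $p_{kh}(T)=0$) yields $\max_{0\le j\le M}\norm{\theta(t_j)}^2\le 2\sum_{m=1}^M\abs{A(\chi_{I_m}\theta(t_m^*),\rho)}$. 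For each $m$ one has $A(\chi_{I_m}\psi,\rho)=-(\rho(t_m)-\rho(t_{m-1}),\psi)+a(\psi,\int_{I_m}\rho\,dt)$, in which the $R_hp$-contribution to $\int_{I_m}\rho\,dt$ drops out of the $a(\cdot,\cdot)$ term by Galerkin orthogonality of $R_h$; the difference $\rho(t_m)-\rho(t_{m-1})=\int_{I_m}(p_t-R_h p_t)\,dt$ is $O(h^2k_m^{1/2}\norm{\Delta p_t}_{I_m})$ by the Ritz estimate for $p_t$; and the remaining temporal term is $-(\Delta\int_{I_m}(p-I_kp)\,dt,\psi)$, a trapezoidal-rule remainder of size $O(k_m^{3/2}\norm{\Delta p_t}_{I_m})$, improving to $O(k_m^{5/2}\norm{\Delta p_{tt}}_{I_m})$ under the extra regularity. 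Inserting these with $\psi=\theta(t_m^*)$, bounding $\norm{\theta(t_m^*)}\le\max_j\norm{\theta(t_j)}$, a discrete Cauchy--Schwarz in $m$, and absorbing one factor of $\max_j\norm{\theta(t_j)}$, I would obtain $\norm{\theta}_{L^\infty(I,\H)}\le C(h^2+k)\norm{\Delta p_t}_I$, and $C(h^2+k^2)\norm{\Delta p_t}_I+Ck^2\norm{\Delta p_{tt}}_I$ in the regular case; together with the bound for $\rho$ this gives the two assertions (the grouping of the right-hand side is cosmetic).

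The main obstacle is exactly this pointwise-in-time control of $\theta$: a crude inverse inequality in time would cost a factor $k^{-1/2}$ and at the same time degrade $h^2$ to $h^2k^{-1/2}$, so one genuinely needs the backward telescoping/energy structure together with the terminal condition $\theta(t_M)=0$ and the Ritz orthogonality --- these are what make the spatial and temporal parts of $\rho$ enter only through the small quantities above. A secondary subtlety is that the surviving quadrature remainder is only $O(k^2)$ when two time derivatives of $p$ are available (under mere $H^1$-in-time regularity it is of order $3/2$), which is precisely why the improved estimate requires $p\in H^2(I,H^2(\Omega)\cap\V)$ and $p_{tt}\in L^\infty(I,\H)$; and it remains to confirm the routine point that the scheme enforces $p_{kh}(T)=0$ (or else to estimate the small value $\theta(t_M)$ by testing \eqref{E:AdjDiscrh} with a function supported on $I_M$).
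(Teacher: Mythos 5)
The paper does not actually prove this lemma in-text --- the ``proof'' is a citation of Lemma~65 in \cite{dissnvd} --- so there is no displayed argument to compare against; judged on its own, your proposal is correct and follows the route one would expect for these Petrov--Galerkin (Crank--Nicolson-type) schemes. You identify all the load-bearing ingredients: the splitting $p-p_{kh}=\rho_h+\rho_k+\theta$ via the Ritz projection and nodal-in-time interpolation; Galerkin orthogonality (consistency holds because the strong form $-p_t-\Delta p=h$, $p(T)=0$ is available under the assumed regularity); testing with $\chi_{I_m}\theta(t_m^*)$ and backward telescoping, which hinges on $\theta(t_M)=0$ --- and indeed $p_{kh}(T)=0$ is enforced by the extra degree of freedom of $Y_{kh}$ at $t=T$ while $\Pi p(T)=R_hp(T)=0$; the cancellation of the Ritz contribution inside the $a(\cdot,\cdot)$ term; and the trapezoidal remainder of order $k_m^{3/2}\norm{\Delta p_t}_{I_m}$ versus $k_m^{5/2}\norm{\Delta p_{tt}}_{I_m}$, which is precisely where the two regularity levels of the statement separate. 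The only point worth tightening is the $\H$-stability of $R_h$ used for $\rho_k$: either invoke the known $L^2$-stability of the Ritz projection on quasi-uniform meshes (Assumption~\ref{A:grid2h}), or avoid it altogether by writing $\norm{R_h(p-I_kp)}\le\norm{p-I_kp}+Ch^2\norm{\Delta(p-I_kp)}$ and absorbing the second term via $\norm{\Delta p(t)}\le C\norm{\Delta p_t}_I$ (using $\Delta p(T)=0$), which stays within the claimed bound.
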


The following superconvergence result, which is \cite[Lemma~66]{dissnvd},
will also be used in the later error analysis.

\begin{lemm}\label{L:ImprRatekh} 
   Let $y\in Y$ and $y_{kh}\in Y_{kh}$ solve 
   \eqref{E:WFM} and \eqref{E:WFDh}, respectively,
   with data $(f,g)$ fulfilling
   $f\in H^1(I,\H)$, $f(0)\in \V$, $g\in\V$, and $\Delta g\in\V$. 
   By $p_{kh}(h)\in P_{kh}$ we denote the 
   solution to \eqref{E:AdjDiscrh} with right-hand side $h$. 
   Then it holds
  \begin{equation*} 
    \norm{y_{kh}-\mathcal P_{Y_k}y}_I 
     +\norm{p_{kh}(y_{kh}-y)}_{C(\bar I,\H)}
           \le C (k^2 F_1(f,g)+h^2 F_2(f,g))
   \end{equation*}
   with \quad\quad
      $   F_2(f,g):=\norm{f}_I +\norm{g}_{H^1(\Omega)} $\\
   and \quad\quad
      $   F_1(f,g):=F_2(f,g)
           +\norm{\partial_t f}_I
           +\norm{f(0)}_{H^1(\Omega)}
           +\norm{\Delta g}_{H^1(\Omega)}$.
\end{lemm}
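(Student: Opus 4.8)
The plan is to reduce the estimate of the second term to that of the first, and then to prove the bound on $\norm{y_{kh}-\mathcal P_{Y_k}y}_I$ by a duality argument in the spirit of the semidiscrete superconvergence analysis of \cite{DanielsHinzeVierling}. For the second term, note that $p_{kh}(h)$ depends on its data $h$ only through the pairings $\int_0^T\langle h,\tilde y\rangle_{\Vd\V}\,dt$ with $\tilde y\in Y_{kh}$; since every such $\tilde y$ is piecewise constant in time and $\int_{I_m}(\mathcal P_{Y_k}v-v)\,dt=0$ by \eqref{D:oproj} (with $\tilde y|_{I_m}$ taking values in $X_{h0}\subset\H$), we obtain $p_{kh}(y_{kh}-y)=p_{kh}(y_{kh}-\mathcal P_{Y_k}y)$. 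Hence, by Lemma~\ref{L:AdjDiscrStabh} and Friedrichs' inequality, $\norm{p_{kh}(y_{kh}-y)}_{C(\bar I,\H)}\le C\norm{\nabla p_{kh}(y_{kh}-\mathcal P_{Y_k}y)}_{C(\bar I,\H)}\le C\norm{y_{kh}-\mathcal P_{Y_k}y}_I$, so it remains to bound the first term.

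For this, I would first split off the spatial projection error: let $\hat y_{kh}\in Y_{kh}$ be the timeslice-wise Ritz projection of $\mathcal P_{Y_k}y$ onto $X_{h0}$. Standard $L^2$ finite element estimates on the convex domain $\Omega$ give $\norm{\mathcal P_{Y_k}y-\hat y_{kh}}_I\le Ch^2\norm{\Delta y}_I\le Ch^2F_2$, using $\norm{\Delta y}_I\le\norm{f}_I+\norm{\partial_t y}_I\le CF_2$. It therefore suffices to bound $e:=y_{kh}-\hat y_{kh}\in Y_{kh}$. I introduce the discrete dual state $z_{kh}:=p_{kh}(e)\in P_{kh}$, so that $\norm{e}_I^2=A(e,z_{kh})$ by \eqref{E:AdjDiscrh}. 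Since $P_{kh}\subset W(I)$, the function $z_{kh}$ is admissible as a test function in both \eqref{E:WFM} and \eqref{E:WFDh}, so Galerkin orthogonality yields $A(y_{kh}-y,z_{kh})=0$ and hence $\norm{e}_I^2=A(y-\hat y_{kh},z_{kh})=A(y-\mathcal P_{Y_k}y,z_{kh})+A(\mathcal P_{Y_k}y-\hat y_{kh},z_{kh})$. In the first term, expand $A$ from \eqref{bilinA}: the time-derivative pairing vanishes because $(z_{kh})_t$ is piecewise constant in time with values in $X_{h0}\subset\H$ while $\int_{I_m}(y-\mathcal P_{Y_k}y)\,dt=0$; the endpoint term drops out by the homogeneous discrete final condition inherent in \eqref{E:AdjDiscrh}; and in the remaining spatial term I subtract the piecewise-constant-in-time mean $\mathcal P_{Y_k}z_{kh}$ (which is legitimate by the same mean-zero property), integrate by parts in space, use $\Delta(\mathcal P_{Y_k}y)=\mathcal P_{Y_k}(\Delta y)$, and combine the interpolation estimates of \cite[Lemma~31]{dissnvd} with the stability $\norm{(z_{kh})_t}_I\le C\norm{e}_I$ of Lemma~\ref{L:AdjDiscrStabh} to obtain a bound $Ck^2\norm{\Delta\partial_t y}_I\norm{e}_I\le Ck^2F_1\norm{e}_I$; here $\norm{\Delta\partial_t y}_I\le CF_1$ follows by applying parabolic regularity to $\partial_t y$, which solves the heat equation with right-hand side $\partial_t f\in\LIIH$ and initial value $f(0)+\Delta g\in\V$. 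The second term $A(\mathcal P_{Y_k}y-\hat y_{kh},z_{kh})$ vanishes in the spatial bilinear form by the defining property of the Ritz projection, leaving only the time-derivative pairing, bounded by $\norm{(z_{kh})_t}_I\norm{\mathcal P_{Y_k}y-\hat y_{kh}}_I\le Ch^2F_2\norm{e}_I$. Dividing by $\norm{e}_I$ and combining with the spatial split finishes the proof.

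The main obstacle is the superconvergence cancellation in time: one must verify rigorously that the naive first-order-in-$k$ contribution, which would arise from $\norm{y-\mathcal P_{Y_k}y}_I=O(k)$, is annihilated by the mean-zero structure of $\mathcal P_{Y_k}$ together with the dual-grid construction underlying the Petrov--Galerkin scheme, and do this while simultaneously keeping the spatial error at $O(h^2)$ rather than $O(h)$ --- which forces the use of the Ritz (rather than $L^2$) projection for the comparison object, enough elliptic regularity of $y$, and the final-condition structure of $p_{kh}$. Once the duality identity is set up correctly, the remaining estimates are routine applications of Lemma~\ref{L:AdjDiscrStabh}, the interpolation estimates of \cite[Lemma~31]{dissnvd}, and standard finite element theory, all with constants independent of $k$ and $h$; the genuinely new work lies in the bookkeeping within $A(y-\hat y_{kh},z_{kh})$.
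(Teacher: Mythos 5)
The paper gives no in-text proof of this lemma but defers to \cite[Lemma~66]{dissnvd}; your argument --- reducing the $C(\bar I,\H)$ term to the $L^2$ term via the identity $p_{kh}(y_{kh}-y)=p_{kh}(y_{kh}-\mathcal P_{Y_k}y)$ and the stability of Lemma~\ref{L:AdjDiscrStabh}, then bounding $\norm{y_{kh}-\mathcal P_{Y_k}y}_I$ by testing with the discrete adjoint of the error, splitting off a slice-wise Ritz projection, and exploiting the mean-zero property of $\mathcal P_{Y_k}$ together with the terminal condition $z_{kh}(T)=0$ built into \eqref{E:AdjDiscrh} --- is exactly the standard superconvergence duality proof for these Petrov--Galerkin schemes and matches the cited source's approach. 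I see no gap: the identification of $\norm{\Delta \partial_t y}_I\le CF_1$ through the differentiated heat equation is precisely what the hypotheses $f\in H^1(I,\H)$, $f(0)\in\V$, $\Delta g\in\V$ are designed for, and all remaining inputs are covered by Lemma~\ref{L:AdjDiscrStabh} and \cite[Lemma~31]{dissnvd}.
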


We are now able to introduce
the discretized optimal control problem which reads

\begin{equation}\label{OCPkh}\tag{$\mathbb P_{kh}$}
\begin{aligned}
&\min_{ y_{kh}\in Y_{kh},u\in\Uad} 
     J(y_{kh},u)=\min \frac{1}{2}\norm{y_{kh}-y_d}^2_I+
             \frac{\alpha}{2}\norm{u}^2_U,\\
&\text{s.t. } y_{kh}=S_{kh}(Bu,y_0)
\end{aligned}
\end{equation}
where $\alpha$, $B$, $y_0$, $y_d$, and $\Uad$ are chosen as for
\eqref{OCP} and
$S_{kh}$ is the solution operator associated to the fully 
discrete state equation \eqref{E:WFDh}. 
Recall that the space $Y_{kh}$ was introduced in \eqref{E:spaceskh}.

For every $\alpha > 0$, this problem admits a unique solution triple
($\uoptd$, $\yoptd$, $\poptd$) where
$\yoptd = S_{kh}(B\uoptd,y_0)$ and 
$\poptd$ denotes the discrete adjoint state which is
the solution of the fully discrete adjoint equation 
\eqref{E:AdjDiscrh} with right-hand side $h:=\yoptd-y_d$.
The first order necessary and sufficient optimality condition for
problem \eqref{OCPkh} is given by
\begin{equation}\label{VarIneqkh}
   \uoptd\in\Uad,\quad
   \left( \alpha\uoptd + \dual B\poptd,u-\uoptd\right)_U 
   \ge 0\quad \forall\ u\in\Uad,
\end{equation}
which can be rewritten as
\begin{equation}\label{FONCkh}
    \uoptd = P_{\Uad}\left(-\frac{1}{\alpha}\dual B\poptd\right).
\end{equation}
The above mentioned facts can be proven in the same way
as for the continuous problem \eqref{OCP}. 

Note that the control space $U$ is not discretized in the
formulation \eqref{OCPkh}. In the numerical treatment, 
the relation \eqref{FONCkh} is instead exploited to get a discrete 
control.
This approach is called
\emph{Variational Discretization} and was introduced 
in \cite{Hinze2005}, see also \cite[Chapter 3.2.5]{hpuu}
for further details.

\begin{rema}\label{R:u0h}
In the case $\alpha = 0$, problem \eqref{OCPkh} has at least
one solution, but only $\yoptd$ and $\poptd$ are unique, whereas
an associated optimal control is in general non-unique. The reason
is that $f\mapsto S_{kh}(f,y_0)$ is not injective in contrast to 
$f\mapsto S(f,y_0)$. 
However, the discrete solution is unique (and of bang-bang type)
if the zero level set of $\dual B\poptd$ has measure zero.
\end{rema}

\subsection{Error estimates for the regularized 
                 problem}\label{sub:errestregprob}

In what follows, we use the notation $y_{kh}(v):= S_{kh}(Bv,y_0)$
with $v\in\Uad$, and $p_{kh}(h)$ is an abbreviation of the solution
to \eqref{E:AdjDiscrh} with right-hand side $h\in\LIIVd$.
Furthermore, $y(v)$ and $p(h)$ denote the continuous counterparts.
Note that therefore we have $\yopt=y(\uopt)$, $\yoptd = y_{kh}(\uoptd)$,
$\popt=p(\yopt-y_d)$, and $\poptd = p_{kh}(\yoptd-y_d)$.

The following Lemma provides a first step towards an error estimate
with respect to the control and state discretization. 
\begin{lemm}\label{L:errudu}
  Let $\uopt$ and $\uoptd$ solve \eqref{OCP} and \eqref{OCPkh},
  respectively, both for the same $\alpha \ge 0$. Then there holds
\begin{multline*}
\alpha  \norm{\uoptd-\uopt}_U^2 +\norm{\yoptd-y_{kh}(\uopt)}^2_I\\ 
  \le \left( 
   \dual B\Big( p_{kh}(\yopt-y_d)-\popt 
        + p_{kh}(y_{kh}(\uopt)-\yopt) \Big),
    \uopt-\uoptd\right)_U.
\end{multline*}
\end{lemm}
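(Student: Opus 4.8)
The plan is to test the two variational inequalities \eqref{VarIneq} and \eqref{VarIneqkh} against each other and to exploit the structure of the state and adjoint equations to replace the adjoint terms by norms of state differences. First I would write $a_\alpha := \alpha\uopt + \dual B\popt$ and $a_{kh} := \alpha\uoptd + \dual B\poptd$; testing \eqref{VarIneq} with $u=\uoptd$ and \eqref{VarIneqkh} with $u=\uopt$ gives $(a_\alpha,\uoptd-\uopt)_U\ge 0$ and $(a_{kh},\uopt-\uoptd)_U\ge 0$. Adding, one obtains
\[
   \alpha\norm{\uoptd-\uopt}_U^2 \le \left(\dual B(\poptd-\popt),\uopt-\uoptd\right)_U,
\]
which already produces the $\alpha\norm{\uoptd-\uopt}_U^2$ term and the leading adjoint difference $p_{kh}(\yopt-y_d)-\popt$ after writing $\poptd = p_{kh}(\yoptd-y_d)$ and adding and subtracting $p_{kh}(\yopt-y_d)$ inside the bracket. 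This step handles the $\alpha=0$ case as well, since it does not use \eqref{FONC}.

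The second, and more delicate, ingredient is to produce the state term $\norm{\yoptd-y_{kh}(\uopt)}_I^2$ on the left. Here I would use the adjoint equation in the form \eqref{E:AdjDiscrh}: since $\poptd = p_{kh}(\yoptd-y_d)$ solves \eqref{E:AdjDiscrh} with right-hand side $\yoptd-y_d$, and $p_{kh}(\yoptd - y_{kh}(\uopt))$ solves it with right-hand side $\yoptd - y_{kh}(\uopt)$, I can pair these against the discrete state difference $\yoptd-y_{kh}(\uopt)\in Y_{kh}$, which is admissible as a test function $\tilde y$ in \eqref{E:AdjDiscrh}. On the other side, $\yoptd = y_{kh}(\uoptd)$ and $y_{kh}(\uopt)$ both solve \eqref{E:WFDh}, so their difference solves \eqref{E:WFDh} with data $(B(\uoptd-\uopt),0)$; pairing \eqref{E:WFDh} for this difference against $p_{kh}(\yoptd-y_{kh}(\uopt))\in P_{kh}$ as test function $v_{kh}$, and using the definition of $A$ together with the adjoint identity just described, yields
\[
   \norm{\yoptd-y_{kh}(\uopt)}_I^2
   = \left(B(\uoptd-\uopt),\, p_{kh}(\yoptd-y_{kh}(\uopt))\right)_{I}
   = \left(\uoptd-\uopt,\, \dual B\, p_{kh}(\yoptd-y_{kh}(\uopt))\right)_U.
\]
Finally I would decompose $\yoptd - y_{kh}(\uopt) = (\yoptd - y_d) - (y_{kh}(\uopt)-y_d)$ and use linearity of $p_{kh}(\cdot)$ and $\dual B$ to rewrite $\dual B\,p_{kh}(\yoptd - y_{kh}(\uopt)) = \dual B p_{kh}(\yoptd-y_d) - \dual B p_{kh}(y_{kh}(\uopt)-y_d)$; rearranging $\popt = p(\yopt-y_d)$ and inserting and removing $p_{kh}(y_{kh}(\uopt)-\yopt)$ aligns this with the right-hand side of the claimed inequality once the two contributions are added.

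The main obstacle I anticipate is bookkeeping rather than depth: one must keep straight which instance of the state/adjoint operator is continuous versus discrete, verify carefully that every pairing uses an \emph{admissible} test function (in particular that $\yoptd - y_{kh}(\uopt)$ genuinely lies in $Y_{kh}$ and $p_{kh}(\cdot)$ in $P_{kh}$), and confirm that the signs line up so that the two inequalities combine additively. A subtle point is that in the $\alpha = 0$ case $\uoptd$ need not be unique (Remark~\ref{R:u0h}), but the argument only uses one fixed optimal control and the uniqueness of $\yoptd$ and $\poptd$, so the estimate still holds; I would note this explicitly. Once the two displayed identities are in hand, summing them and grouping the adjoint terms gives exactly the asserted bound, with the bilinearity of $A$ and the symmetry $(\dual B v,u)_U = (v,Bu)_{I}$ (or the $\Vd$--$\V$ pairing analogue) doing the remaining routine work.
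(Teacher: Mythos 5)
Your proposal is correct and follows essentially the same route as the paper's proof: testing \eqref{VarIneq} and \eqref{VarIneqkh} against each other's optimal controls, splitting $\dual B(\poptd-\popt)$ by inserting $p_{kh}(\yopt-y_d)$ and $p_{kh}(y_{kh}(\uopt)-y_d)$, and identifying $\left(\dual B\, p_{kh}(\yoptd-y_{kh}(\uopt)),\uopt-\uoptd\right)_U=-\norm{\yoptd-y_{kh}(\uopt)}_I^2$ via the discrete state--adjoint duality. The paper compresses that last identification into the remark that ``the last line equals $-\norm{\yoptd-y_{kh}(\uopt)}^2_I$''; you merely make the same step explicit, so no further changes are needed.
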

\begin{proof}
Inserting $\uoptd$ into \eqref{VarIneq} and $\uopt$ into 
\eqref{VarIneqkh} and adding up the resulting inequalities yields
\[
    \Big(\alpha (\uoptd-\uopt) + \dual B(\poptd-\popt),
         \uoptd-\uopt \Big)_U \le 0.
\]
After some simple manipulations we obtain
\begin{multline*}
\alpha  \norm{\uoptd-\uopt}^2_U
  \le \left( \dual B\Big(p_{kh}(\yopt-y_d)-\popt+ p_{kh}(y_{kh}(\uopt))
       -p_{kh}(\yopt)\Big), \uopt-\uoptd\right)_U\\
  +\left( \dual B\Big(\poptd- p_{kh}(y_{kh}(\uopt)-y_d)\Big),
             \uopt-\uoptd\right)_U,
\end{multline*}
and since the last line equals
$-\norm{\yoptd-y_{kh}(\uopt)}^2_I$, we end up with the desired estimate
by moving this term to the left.
\end{proof}

We can now prove an error estimate, which resembles the standard estimate
for variational discretized controls. It is build upon
\cite[Theorem~5.2]{DanielsHinzeVierling}. 
Since we are interested in
the limit behavior $\alpha\to 0$, we try to give a precise dependence
of the right-hand side on $\alpha$. Note the splitting in terms of
the quantities $d_0$ and $d_1$. 
In contrast to $d_0$, the term $d_1$ is \emph{not} bounded if
$\alpha\to 0$.

\begin{theo}\label{T:discrerr}
  Let $\uopt$ and $\uoptd$ solve \eqref{OCP} and \eqref{OCPkh},
  respectively, both for the same $\alpha \ge 0$.
  Then there exists
  a constant $\alpha_{\max} > 0$ independent of $k$ and $h$, 
  so that for all $0\le \alpha \le \alpha_{\max}$
  (with the convention ``$1/0=\infty=d_1$'' in the case of $\alpha=0$)
  the estimate
\begin{equation}\label{E:erruopt}
\begin{aligned}
\sqrt\alpha  \norm{\uoptd-\uopt}_U &+\norm{\yoptd-y_{kh}(\uopt)}_I\\ 
&\le 
    C \min\left(\frac{k^2+h^2}{\sqrt\alpha}d_0, 
             (k+h)\sqrt{\norm{\uoptd-\uopt}_U}\sqrt{d_0} \right) \\
  &\quad\quad + C\min\left(k^2d_1,kd_0\right)+Ch^2d_0 \\
&\le 
    C \max(d_0+1,\sqrt{d_0})
      \min\left(\frac{k^2}{\alpha}+\frac{h^2}{\sqrt\alpha},k+h \right)
\end{aligned}
\end{equation}
is satisfied with the constants $d_0 = d_0(\uopt)$ and 
$d_1 = d_1(\uopt)$
from the estimates \eqref{E:uypapriori} in Lemma~\ref{L:regOCPposalpha}.
\end{theo}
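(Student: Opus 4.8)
The plan is to start from the estimate in Lemma~\ref{L:errudu}, which bounds $\alpha\norm{\uoptd-\uopt}_U^2+\norm{\yoptd-y_{kh}(\uopt)}_I^2$ by the inner product of $\dual B$ applied to the adjoint-error combination $p_{kh}(\yopt-y_d)-\popt+p_{kh}(y_{kh}(\uopt)-\yopt)$ against $\uopt-\uoptd$. The right-hand side splits naturally into two adjoint-error contributions: first, the discretization error $p_{kh}(\yopt-y_d)-\popt = p_{kh}(\yopt-y_d)-p(\yopt-y_d)$ of the adjoint equation with the fixed right-hand side $\yopt-y_d$; second, the term $p_{kh}(y_{kh}(\uopt)-\yopt)$ coming from the state discretization error fed into the discrete adjoint solver. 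I would bound each by Cauchy--Schwarz in $U$ (using continuity of $\dual B$, hence of $\dual B\circ p_{kh}$ as a map into $U$), producing a factor $\norm{\uopt-\uoptd}_U$ times an adjoint-error norm. For the first contribution, Lemma~\ref{L:pkherr} gives $\norm{p_{kh}(\yopt-y_d)-\popt}_I\le C(k^2+h^2)(\norm{\popt_{tt}}_I+\norm{\Delta\popt_t}_I)\le C(k^2+h^2)d_0$, using the a priori bound \eqref{E:uypapriori}; note $d_0$ stays bounded as $\alpha\to 0$. For the second, Lemma~\ref{L:AdjDiscrStabh} applied to $p_{kh}(y_{kh}(\uopt)-\yopt)$ gives a bound by $C\norm{y_{kh}(\uopt)-\yopt}_I$, and then Lemma~\ref{L:yDiscrStabh} (the error estimate \eqref{E:ykherr} with $f=B\uopt$, $g=y_0$) yields $\norm{y_{kh}(\uopt)-\yopt}_I\le C(h^2+k)(\norm{B\uopt}_I+\norm{\nabla y_0})\le C(h^2+k)d_0$.

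Putting these together gives, schematically,
\[
\alpha\norm{\uoptd-\uopt}_U^2+\norm{\yoptd-y_{kh}(\uopt)}_I^2
 \le C\big((k^2+h^2)d_0 + (k+h^2)d_0\big)\,\norm{\uopt-\uoptd}_U.
\]
From here there are two ways to conclude, corresponding to the two branches of the $\min$ in \eqref{E:erruopt}. For the \emph{robust} branch, I would bound $\norm{\uopt-\uoptd}_U$ on the right using only the $\Uad$-boundedness (so it is $O(1)$, absorbing into $d_0$), drop the $\sqrt\alpha$ on the left of the state term, and take square roots: the $\yoptd-y_{kh}(\uopt)$ term is then $\le C(k+h)\sqrt{d_0}\sqrt{\norm{\uoptd-\uopt}_U}$, and more carefully the $\min(k^2 d_1,k d_0)$ and $Ch^2 d_0$ pieces must be tracked by \emph{not} collapsing $k^2+h^2$ and $k+h^2$ prematurely --- the purely-spatial part $h^2 d_0$ and the mixed-order part want to be separated from the $k$-part, where using $d_1$ (from the sharper regularity, via Lemma~\ref{L:ImprRatekh}'s superconvergence $\norm{y_{kh}-\mathcal P_{Y_k}y}_I\le C(k^2 F_1+h^2 F_2)$) upgrades $k d_0$ to $k^2 d_1$. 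For the \emph{non-robust} branch, I would instead absorb: from $\alpha\norm{\uoptd-\uopt}_U^2\le C(k^2+h^2)d_0\norm{\uopt-\uoptd}_U$ divide once to get $\norm{\uoptd-\uopt}_U\le C\alpha^{-1}(k^2+h^2)d_0$, whence $\sqrt\alpha\norm{\uoptd-\uopt}_U\le C\alpha^{-1/2}(k^2+h^2)d_0$; feeding this back bounds the state term by the same quantity, giving the first argument of the outer $\min$. The final line of \eqref{E:erruopt} then follows by crudely estimating $\min(k^2 d_1,k d_0)\le k d_0$ (or, after coupling, absorbing $d_1$) and combining the two branches into $\min(k^2/\alpha+h^2/\sqrt\alpha,\,k+h)$ times $C\max(d_0+1,\sqrt{d_0})$.

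The main obstacle is the bookkeeping that separates the temporal order ($k$ versus $k^2$) from the spatial order ($h^2$) cleanly enough to land exactly on $C\min((k^2+h^2)\alpha^{-1/2}d_0,\ (k+h)\sqrt{\norm{\uoptd-\uopt}_U}\sqrt{d_0}) + C\min(k^2 d_1,k d_0)+Ch^2 d_0$ rather than a coarser bound: in particular, getting the $k^2 d_1$ improvement requires invoking the superconvergence Lemma~\ref{L:ImprRatekh} for the state term (not just \eqref{E:ykherr}), and one must check that feeding $y_{kh}(\uopt)-\yopt$ (rather than its projection) through the stable discrete adjoint solver $p_{kh}(\cdot)$ does not destroy the $k^2$ order --- this is where the pairing $\norm{p_{kh}(y_{kh}-y)}_{C(\bar I,\H)}\le C(k^2 F_1+h^2 F_2)$ from Lemma~\ref{L:ImprRatekh} is exactly what is needed, since a naive combination of Lemma~\ref{L:AdjDiscrStabh} with \eqref{E:ykherr} only gives order $k$. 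A secondary subtlety is the threshold $\alpha_{\max}$ and the ``$1/0=\infty$'' convention: for $\alpha=0$ the $d_1$-terms are allowed to blow up and only the robust branch survives, so the argument must be organized so that division by $\alpha$ happens only inside the explicitly non-robust branch. Everything else --- the Cauchy--Schwarz steps, continuity of $\dual B$, and the elementary $ab\le \varepsilon a^2+C_\varepsilon b^2$ absorptions --- is routine.
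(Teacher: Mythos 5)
Your treatment of the first contribution (the adjoint consistency error $p_{kh}(\yopt-y_d)-\popt$) matches the paper: Cauchy--Schwarz, Lemma~\ref{L:pkherr}, and either Young absorption into $\alpha\norm{\uopt-\uoptd}_U^2$ (non-robust branch) or the crude boundedness of $\norm{\uopt-\uoptd}_U$ (robust branch). The genuine gap is in the second term $II=\left(\dual B\, p_{kh}(y_{kh}(\uopt)-\yopt),\uopt-\uoptd\right)_U$. You propose Cauchy--Schwarz in $U$ followed by discrete adjoint stability (Lemma~\ref{L:AdjDiscrStabh}), or, for the improved order, the $C(\bar I,\H)$ superconvergence bound of Lemma~\ref{L:ImprRatekh}. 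Either way you arrive at $II\le C\,\varepsilon\,\norm{\uopt-\uoptd}_U$ with $\varepsilon$ of order $(k+h^2)d_0$ or $(k^2d_1+h^2d_0)$. But the left-hand side of Lemma~\ref{L:errudu} controls only $\alpha\norm{\uopt-\uoptd}_U^2$ and $\norm{\yoptd-y_{kh}(\uopt)}_I^2$: since $\norm{\uopt-\uoptd}_U$ is merely $O(1)$, taking square roots yields $\norm{\yoptd-y_{kh}(\uopt)}_I\le C\sqrt{\varepsilon}$, i.e.\ only order $\sqrt{k}$ in time; while absorbing the factor $\norm{\uopt-\uoptd}_U$ into $\alpha\norm{\cdot}_U^2$ reintroduces $1/\sqrt\alpha$ and fails outright for $\alpha=0$, which the theorem explicitly covers. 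Neither variant produces the stated summand $C\min\left(k^2d_1,kd_0\right)+Ch^2d_0$.

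The missing idea is the discrete duality identity: because $\yoptd-y_{kh}(\uopt)$ is the discrete state (with zero initial value) generated by $B(\uoptd-\uopt)$ and the Petrov--Galerkin state and adjoint schemes are adjoint to one another, one has $II=(y_{kh}(\uopt)-\yopt,\,y_{kh}(\uopt)-\yoptd)_I$. Young's inequality then lets you absorb $\tfrac12\norm{y_{kh}(\uopt)-\yoptd}_I^2$ into the left-hand side, leaving the \emph{full square} $\tfrac12\norm{y_{kh}(\uopt)-\yopt}_I^2$ on the right; moreover $\yopt$ may be replaced by $\mathcal P_{Y_k}\yopt$ in this inner product because the other factor is piecewise constant in time, which is exactly what makes the superconvergence of Lemma~\ref{L:ImprRatekh} applicable and gives $\sqrt{II}\le C\min\left(kd_0,k^2d_1\right)+Ch^2d_0$ with no loss of order and no power of $1/\alpha$. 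A secondary, smaller omission: the last line of \eqref{E:erruopt} requires the decay estimate $d_1\le C\alpha^{-1}(d_0+1)$ for $0\le\alpha\le\alpha_{\max}$, which the paper obtains from the projection formula \eqref{FONC} via \eqref{E:highuopt}; you gesture at ``absorbing $d_1$'' but do not establish this.
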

\begin{proof}
We split the right-hand side of the estimate 
from Lemma~\ref{L:errudu} and get with the Cauchy-Schwarz inequality
\begin{multline*}
 \alpha  \norm{\uoptd-\uopt}^2_U + \norm{\yoptd-y_{kh}(\uopt)}^2_I\\ 
   \le  
     \norm { p_{kh}(\yopt-y_d)-\popt }_I \norm{\uopt-\uoptd}_U
       + \left(\dual B\left(
                 p_{kh}\left(y_{kh}\left(\uopt\right)-\yopt\right)
                     \right), \uopt-\uoptd\right)_U 
                   = I+II.
\end{multline*}
With the help of Lemma~\ref{L:pkherr} and Lemma~\ref{L:regOCPposalpha},
we conclude 
\begin{equation*}
  \norm{p_{kh}(\yopt-y_d)-\popt }_I
 \le C(k^2+h^2) ( \norm{\popt_{tt}}_I
          + \norm{\Delta \popt_t}_I) \le C(k^2+h^2)d_0.
\end{equation*}
Now we use Cauchy's inequality to obtain 
\begin{equation*}
  I \le \frac{C}{\alpha}
          \norm{  p_{kh}(\yopt-y_d)-\popt }_I^2
       +\frac{\alpha}{2} \norm{ \uopt-\uoptd }_U^2.
\end{equation*}
Here, the second addend can be moved to the left.
Both estimates can be summarized as
\begin{equation*}
  \sqrt I  \le C \min\left(\frac{k^2+h^2}{\sqrt\alpha}d_0, 
             (k+h)\sqrt{\norm{\uoptd-\uopt}_U}\sqrt{d_0}\right).
\end{equation*}
The addend $II$ can be estimated as
\begin{equation*}
  II = (y_{kh}(\uopt)-\yopt,
        y_{kh}(\uopt)-\yoptd)_I
     \le \frac 12 (\norm{y_{kh}(\uopt)-\yopt}_I^2
         + \norm{y_{kh}(\uopt)-\yoptd}_I^2 ).
\end{equation*}
We move the second term to the left.
Note that in the previous estimate $\yopt$ can be replaced
by $\mathcal P_{Y_k}\yopt$ by definition of $\mathcal P_{Y_k}$.
We thus can invoke either the error estimate of the state equation
\eqref{E:ykherr} from Lemma~\ref{L:yDiscrStabh} or the 
superconvergence result from Lemma~\ref{L:ImprRatekh}.
In conclusion, we have
\begin{equation*}
  \sqrt{II} \le C\min\left((k+h^2)d_0,k^2d_1+h^2d_0\right)
                  =\min\left(kd_0,k^2d_1\right)+h^2d_0. 
\end{equation*}
Together with the estimate for $\sqrt{I}$, we obtain 
the first inequality of the claim.

For the second inequality, we first note that
with the help of the projection
formula \eqref{FONC}, the stability of the projection, see, e.g., 
\cite[Lemma~11]{dissnvd}, and the regularity result 
\cite[Lemma~6]{dissnvd}
one immediately derives the estimate
\begin{equation}\label{E:highuopt}
\begin{aligned}
    &\norm{\uopt}_{H^1(I,\tilde U)} + \norm{B\uopt(0)}_{\HI}\\
          &\le \frac C\alpha (
    \norm{\popt}_{H^1(I,\H)} + \norm{\popt(0)}_{\HI}  )
        +C(a)+C(b)\\
   &\le \frac C\alpha ( \norm{y_d}_I +\norm{\uopt}_U+\norm{y_0}_{\HI})
        +C(a)+C(b) 
\end{aligned}
\end{equation}
where 
$\tilde U\in \{ \mathbb R, \H \}$, depending on whether located or 
distributed controls are given,
and
%\begin{equation*}%\label{E:constUad}
$
   C(x)= \norm{x}_{H^1(I,\tilde U)} + \norm{x(0)}_{X}     
$
%\end{equation*}
with $X=\HI$ (distributed controls) or $X=\mathbb R$ (located controls).
This term is bounded due to Assumption~\ref{A:Regularity}.

Since there exists an $\alpha_{\max}>0$, depending only on the data
$a$, $b$, $y_0$, $y_d$, such that
\begin{equation}\label{E:uoptregdecay}
 \forall\ 0\le \alpha \le \alpha_{\max}:\quad\quad
          d_1 + d_1^+  \le C \frac 1\alpha (d_0+1)
\end{equation}
holds
with $d_1^+ := d_1^+(\uopt)$
from the estimates \eqref{E:uypapriori} in Lemma~\ref{L:regOCPposalpha},
and since $\sqrt{\norm{\uoptd-\uopt}_U}$ is bounded 
independently of $\alpha$ due to the definition of $\Uad$,
we get the claim.
\end{proof}
 From the proof of the previous Theorem, one can immediately
 derive a first robust (with respect to $\alpha\to 0$)
 error bound for the optimal state.
\begin{coro}\label{C:stateerrrob1}
  Let $\uopt$ and $\uoptd$ solve \eqref{OCP} and \eqref{OCPkh},
  respectively, both for the same arbitrarily chosen $\alpha \ge 0$.
  Then there holds
  \begin{equation*}
       \norm{\yopt-\yoptd}_I \le C (k + h)\max(d_0+1,\sqrt{d_0})
  \end{equation*}
  with a constant $C>0$ independent of $\alpha$ where
  $d_0$ is given in Theorem~\ref{T:discrerr}.
\end{coro}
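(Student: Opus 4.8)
The plan is to use the triangle inequality to split the state error into the discretization error at the optimal continuous control and the difference of the two discrete states, i.e.
\[
   \norm{\yopt - \yoptd}_I
     \le \norm{\yopt - y_{kh}(\uopt)}_I
        + \norm{y_{kh}(\uopt) - \yoptd}_I.
\]
For the second term, I would simply quote the estimate \eqref{E:erruopt} of Theorem~\ref{T:discrerr} in its second (fully explicit) form, which directly bounds $\norm{\yoptd - y_{kh}(\uopt)}_I$ by $C\max(d_0+1,\sqrt{d_0})\min\!\big(\tfrac{k^2}{\alpha}+\tfrac{h^2}{\sqrt\alpha},\,k+h\big) \le C(k+h)\max(d_0+1,\sqrt{d_0})$, using the $\min$ to discard the $\alpha$-dependent alternative and thereby obtain a bound that is robust as $\alpha\to 0$.

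For the first term, I would apply the state error estimate \eqref{E:ykherr} from Lemma~\ref{L:yDiscrStabh} with data $f = B\uopt$ and $g = y_0$, which gives $\norm{\yopt - y_{kh}(\uopt)}_I \le C(h^2+k)\big(\norm{B\uopt}_I + \norm{\nabla y_0}\big)$. The bracket on the right is controlled by $d_0(\uopt)$ as defined in \eqref{E:uypapriori}, since $d_0$ explicitly contains the summands $\norm{B\uopt}_I$ and $\norm{\nabla y_0}$; hence $\norm{\yopt - y_{kh}(\uopt)}_I \le C(k+h)\,d_0 \le C(k+h)\max(d_0+1,\sqrt{d_0})$. (Alternatively one could invoke the superconvergence result Lemma~\ref{L:ImprRatekh}, but that requires the extra regularity $\Delta y_0 \in \V$ and $B\uopt \in H^1(I,\H)$ with $B\uopt(0)\in\V$, which is available under Assumption~\ref{A:Regularity} for $\alpha>0$ but not uniformly helpful in the limit $\alpha\to 0$; for a robust $O(k+h)$ bound the cruder estimate \eqref{E:ykherr} already suffices.) Adding the two contributions yields the claim.

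The only mild subtlety — and the closest thing to an obstacle — is bookkeeping the $\alpha$-dependence: one must make sure that every constant pulled out is genuinely independent of $\alpha$, which is why the argument deliberately avoids any step that would reintroduce a factor $1/\alpha$ (such as the projection formula \eqref{FONC} or the bound \eqref{E:uoptregdecay}), and why the $\min$ in Theorem~\ref{T:discrerr} is essential: it lets us select the robust branch $k+h$ of the estimate. With $\alpha_{\max}$ taken from Theorem~\ref{T:discrerr} and $0\le\alpha\le\alpha_{\max}$ arbitrary, the combined bound $\norm{\yopt-\yoptd}_I \le C(k+h)\max(d_0+1,\sqrt{d_0})$ holds with $C$ independent of $\alpha$, as asserted.
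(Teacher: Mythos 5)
Your proposal is correct and follows essentially the same route as the paper: the triangle inequality splitting $\norm{\yopt-\yoptd}_I \le \norm{y_{kh}(\uopt)-\yoptd}_I + \norm{\yopt-y_{kh}(\uopt)}_I$, the robust $(k+h)$ branch of Theorem~\ref{T:discrerr} for the first term, and the estimate \eqref{E:ykherr} from Lemma~\ref{L:yDiscrStabh} (with the bracket absorbed into $d_0$) for the second. Your additional bookkeeping of the $\alpha$-independence is exactly the point the paper leaves implicit.
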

\begin{proof}
  Combining
    \begin{equation*}
      \norm{\yopt-\yoptd}_I 
           \le  \norm{y_{kh}(\uopt)-\yoptd}_I
                   +\norm{\yopt-y_{kh}(\uopt)}_I
   \end{equation*}
  with the previous Theorem and 
  \eqref{E:ykherr} from Lemma~\ref{L:yDiscrStabh} proves the claim.
\end{proof}
Now, from the above Theorem we derive further non-robust estimates
for the discrete state and adjoint state.
Finally, we prove second order convergence for $\pi_{P_k^*}\yoptd$, i.e.,
the piecewise linear interpolation on the dual grid of the optimal state. 
This function is obtained for free from $\yoptd$,
since $\yoptd$ only has to be evaluated on the dual time grid.
Compare \cite[Theorem~5.3]{DanielsHinzeVierling} for the convergence
of the interpolation in the semidiscrete case.

\begin{coro}\label{C:nonrobest}
Let $\uopt$ and $\uoptd$ denote the solutions to \eqref{OCP}
and \eqref{OCPkh}, respectively, both for the same sufficiently
small $\alpha >0$ (in the sense of Theorem~\ref{T:discrerr}).
With $d_0$ and $d_1$ as in Theorem~\ref{T:discrerr} and 
\begin{equation*}
     d_1^+ := d_1^+(\uopt) = 
  d_1(\uopt) + C (\norm{\partial^2_t y_d}_I
   +\norm{\nabla \partial_t y_d(T)} 
   + \norm{\nabla \Delta y_d(T)}  
   +\norm{\nabla B\uopt(T)}
\end{equation*}
from the estimates \eqref{E:uypapriori} in Lemma~\ref{L:regOCPposalpha},
the estimates
\begin{equation*}
\begin{aligned}
  \norm{\uopt-\uoptd}_U 
       &\le C(\frac{k^2d_1}{\sqrt\alpha} + \frac{k^2+h^2}{\alpha}d_0)
       \le C(\frac{k^2}{\alpha^{3/2}}+\frac{h^2}{\alpha})(d_0+1),\\
  \norm{\yopt-\yoptd}_I
       &\le C(k+ \frac{k^2}{\alpha} + \frac{h^2}{\sqrt\alpha} )(d_0+1),
             \quad\text{and}\\
  \alpha \norm{\uopt-\uoptd}_{L^\infty(I,\tilde U)}
     &+ \norm{\popt-\poptd}_{L^\infty(I,\H)}  %%%% &\phantom{\le}\\
        + \norm{\yopt-\pi_{P_k^*}\yoptd}_I\\
    &\le C(k^2d_1^+ + \frac{k^2+h^2}{\sqrt\alpha}d_0)
     \le C(  \frac{k^2}{\alpha}+\frac{h^2}{\sqrt\alpha})(d_0+1)
\end{aligned}
\end{equation*}
hold 
with $\tilde U\in \{ \mathbb R, \H \}$ depending on whether located or 
distributed controls are given.
\end{coro}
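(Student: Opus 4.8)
The plan is to feed the already-proved estimate \eqref{E:erruopt} of Theorem~\ref{T:discrerr} into the error and stability estimates for the discrete state and adjoint schemes (Lemmas~\ref{L:pkherr}, \ref{L:errphklool2}, \ref{L:AdjDiscrStabh}, \ref{L:ImprRatekh}, and \eqref{E:ykherr} of Lemma~\ref{L:yDiscrStabh}), to read off all occurring regularity constants from \eqref{E:uypapriori} in Lemma~\ref{L:regOCPposalpha}, and finally to turn the resulting $d_1$- and $d_1^+$-dependent bounds into $\alpha$-explicit ones via the decay estimate \eqref{E:uoptregdecay}. Throughout, $\alpha>0$ is fixed with $\alpha\le\alpha_{\max}$, where we may assume $\alpha_{\max}\le 1$; in particular $\alpha<1$, so $\alpha^{-1/2}\le\alpha^{-1}$ and all divisions below are legitimate.

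First I would treat the control in $U$: keeping only the first entry of the outer minimum in \eqref{E:erruopt} and bounding $\min(k^2 d_1,kd_0)\le k^2 d_1$ gives $\sqrt\alpha\,\norm{\uoptd-\uopt}_U\le C(\tfrac{k^2+h^2}{\sqrt\alpha}d_0+k^2 d_1+h^2 d_0)$; dividing by $\sqrt\alpha$ and using $h^2 d_0/\sqrt\alpha\le h^2 d_0/\alpha$ yields the first claimed bound, and inserting $d_1\le C(d_0+1)/\alpha$ from \eqref{E:uoptregdecay} (together with $k^2/\alpha\le k^2/\alpha^{3/2}$) yields the second. For the state in $\LIIH$ I would use $\norm{\yopt-\yoptd}_I\le\norm{\yopt-y_{kh}(\uopt)}_I+\norm{y_{kh}(\uopt)-\yoptd}_I$, bounding the first term by $C(k+h^2)d_0$ with \eqref{E:ykherr} (since $\norm{B\uopt}_I+\norm{\nabla y_0}\le Cd_0$) and the second term by $C(\tfrac{k^2+h^2}{\sqrt\alpha}d_0+k^2 d_1)$ straight from Theorem~\ref{T:discrerr} (absorbing $h^2 d_0\le h^2 d_0/\sqrt\alpha$); absorbing $k^2/\sqrt\alpha\le k^2/\alpha$ and applying \eqref{E:uoptregdecay} once more gives the claim.

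The last line is the heart of the matter, and I would prove its three terms separately, starting with the adjoint. Using linearity of $p_{kh}(\cdot)$ and $\poptd=p_{kh}(\yoptd-y_d)$, write
\[
  \popt-\poptd=\bigl(\popt-p_{kh}(\yopt-y_d)\bigr)+p_{kh}\bigl(\yopt-y_{kh}(\uopt)\bigr)+p_{kh}\bigl(y_{kh}(\uopt)-\yoptd\bigr).
\]
The first bracket is estimated by the improved bound of Lemma~\ref{L:errphklool2} --- whose hypotheses hold for $\popt$ by Lemma~\ref{L:regOCPposalpha} --- by $C(h^2+k^2)d_0+Ck^2 d_1^+$, where the constants come from reading $\norm{\Delta\popt_t}_I+\norm{\popt_t}_{L^\infty(I,\H)}\le Cd_0$ and $\norm{\Delta\popt_{tt}}_I+\norm{\popt_{tt}}_{L^\infty(I,\H)}\le Cd_1^+$ off \eqref{E:uypapriori} (a Poincaré inequality passes from the spatial-gradient bounds there to $\H$-bounds). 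The crucial second bracket equals $\pm\,p_{kh}(y_{kh}(\uopt)-\yopt)$; here one must \emph{not} use the mere $O(k)$ state error but the superconvergence Lemma~\ref{L:ImprRatekh} with $f=B\uopt$, $g=y_0$ (its data requirements $f\in H^1(I,\H)$, $f(0)\in\V$, $g\in\V$, $\Delta g\in\V$ all hold by Assumption~\ref{A:Regularity} and Lemma~\ref{L:regOCPposalpha}), which --- since $p_{kh}\in P_{kh}\subset C([0,T],\H)$ --- yields $\norm{p_{kh}(y_{kh}(\uopt)-\yopt)}_{L^\infty(I,\H)}\le C(k^2 F_1+h^2 F_2)\le C(k^2 d_1+h^2 d_0)$. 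The third bracket is bounded by $C\norm{y_{kh}(\uopt)-\yoptd}_I$ through the stability Lemma~\ref{L:AdjDiscrStabh}, hence by $C(\tfrac{k^2+h^2}{\sqrt\alpha}d_0+k^2 d_1)$ via Theorem~\ref{T:discrerr} as above. Adding up and using $d_1\le d_1^+$ and $(h^2+k^2)d_0\le\tfrac{h^2+k^2}{\sqrt\alpha}d_0$ gives $\norm{\popt-\poptd}_{L^\infty(I,\H)}\le C(k^2 d_1^++\tfrac{k^2+h^2}{\sqrt\alpha}d_0)$. For the control in $L^\infty(I,\tilde U)$ I would subtract \eqref{FONC} and \eqref{FONCkh} and use that $P_{\Uad}$, the pointwise projection onto $[a(x),b(x)]$, is nonexpansive in $L^\infty(I,\tilde U)$, which gives $\alpha\,\norm{\uopt-\uoptd}_{L^\infty(I,\tilde U)}\le\norm{\dual B(\popt-\poptd)}_{L^\infty(\Omega_U)}\le C\norm{\popt-\poptd}_{L^\infty(I,\H)}$ (with $\dual B=\identity$ in the distributed case and $\dual B\colon q\mapsto(g_1,q(\cdot))_{\H}$, $g_1\in\V\subset\H$, in the located case), so the same bound applies. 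For the interpolated state I would split $\yopt-\pi_{P_k^*}\yoptd=(\yopt-\pi_{P_k^*}\mathcal P_{Y_k}\yopt)+\pi_{P_k^*}(\mathcal P_{Y_k}\yopt-\yoptd)$: the first term is a pure interpolation error, $\le Ck^2\norm{\partial_t^2\yopt}_I\le Ck^2 d_1$ as in the semidiscrete analysis \cite[Theorem~5.3]{DanielsHinzeVierling} (using $\yopt\in H^2(I,\H)$ and the interpolation estimates recalled via \cite[Lemma~31]{dissnvd}), while the second, by $L^2$-stability of $\pi_{P_k^*}$ on $Y_k(\H)$, is $\le C\norm{\mathcal P_{Y_k}\yopt-y_{kh}(\uopt)}_I+C\norm{y_{kh}(\uopt)-\yoptd}_I$, whose summands are $\le C(k^2 d_1+h^2 d_0)$ (Lemma~\ref{L:ImprRatekh}) and $\le C(\tfrac{k^2+h^2}{\sqrt\alpha}d_0+k^2 d_1)$ (Theorem~\ref{T:discrerr}). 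Summing the three terms of the last line gives its first bound, and inserting $d_1^+\le C(d_0+1)/\alpha$ from \eqref{E:uoptregdecay} together with $\alpha^{-1/2}\le\alpha^{-1}$ gives the second.

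The hard part will be this $L^\infty(I,\H)$ adjoint estimate (and the control estimate resting on it): securing the $k^2$ rate forces the term $p_{kh}(\yopt-y_{kh}(\uopt))$ through the superconvergence Lemma~\ref{L:ImprRatekh} rather than the plain $\LIIH$ state error, and one must keep careful track of which line of \eqref{E:uypapriori} --- that is, which of $d_0$, $d_1$, $d_1^+$ --- controls each quantity; once that bookkeeping is fixed, the remainder is routine manipulation using $\alpha<1$ and \eqref{E:uoptregdecay}.
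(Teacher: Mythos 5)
Your proof is correct and follows essentially the same route as the paper's: the first two bounds are read off Theorem~\ref{T:discrerr}, the adjoint error is split into the same three parts and handled by Lemma~\ref{L:errphklool2} (improved estimate), the superconvergence Lemma~\ref{L:ImprRatekh}, and the stability Lemma~\ref{L:AdjDiscrStabh} combined with Theorem~\ref{T:discrerr}, the $L^\infty$ control bound comes from the projection formulae, and \eqref{E:uoptregdecay} converts $d_1,d_1^+$ into $\alpha$-explicit constants at the end. The only deviation is that for $\norm{\yopt-\pi_{P_k^*}\yoptd}_I$ the paper simply defers to an external reference, whereas you supply the (standard and correct) interpolation-plus-stability argument explicitly.
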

\begin{proof} %%%%%%%%%%%%%%%%%%%%%%%%%%%%%%%%%%%%%%%%%%%%%%%%%%%%%%%%%%%
The first estimate for the optimal control and the estimate for 
the optimal state follow from Theorem~\ref{T:discrerr}. For the
latter, we argue as in the proof of Corollary~\ref{C:stateerrrob1}.

For the optimal adjoint state, we split the error into three parts
to obtain with $L:=L^\infty(I,\H)$
  \begin{multline*}
  %\begin{equation*}
  %\begin{aligned}
  \norm{\popt-\poptd}_L\\
    \le \norm{\popt-p_{kh}(\yopt-y_d)}_L
        +\norm{p_{kh} (\mathcal P_{Y_k}\yopt -y_{kh}(\uopt))}_L
         + \norm{p_{kh} (y_{kh}(\uopt)- \yoptd)}_L.
  %\end{aligned}
  %\end{equation*}
  \end{multline*}
With the second error estimate from Lemma~\ref{L:errphklool2}, 
the regularity given in Lemma~\ref{L:regOCPposalpha}, and the estimate
from Lemma~\ref{L:ImprRatekh}, we conclude
\begin{equation*}
    \norm{\popt-p_{kh}(\yopt-y_d)}_L 
      + \norm{p_{kh} (\mathcal P_{Y_k}\yopt -y_{kh}(\uopt))}_L
           \le C (h^2d_0+k^2d_1^+),
\end{equation*}
since $d_1\le d_1^+$.

Stability from Lemma~\ref{L:AdjDiscrStabh} combined with
Theorem~\ref{T:discrerr} gives the estimate
\begin{equation*}
   \norm{p_{kh} (y_{kh}(\uopt)- \yoptd)}_L
     \le C \frac{k^2+h^2}{\sqrt\alpha}d_0 + Ck^2d_1 + Ch^2d_0. 
\end{equation*}
From this, we get
\begin{equation*}
  \norm{\popt-\poptd}_L
     \le C \frac{k^2+h^2}{\sqrt\alpha}d_0 + Ck^2d_1^+.
\end{equation*}

The projection formulae \eqref{FONC} and
\eqref{FONCkh}, Lipschitz continuity of the projection given in
\cite[Lemma~11]{dissnvd}, and stability of $\dual B$ yield
\begin{equation*}
  \norm{\uopt-\uoptd}_{L^\infty(I,\tilde U)} 
         \le C \frac 1\alpha \norm{\popt-\poptd}_L.
\end{equation*}
Together with the just established estimate
this yields the pointwise-in-time error estimate for the optimal control.

For the proof of the error $\norm{\yopt-\pi_{P_k^*}\yoptd}_I$, 
we refer the reader to \cite[Corollary~71]{dissnvd}.

Using the inequality \eqref{E:uoptregdecay}, we can finally reduce the
non-robust constants $d_1$ and $d_1^+$ to the robust one $d_0$.
\end{proof}
Let us comment on the estimates of 
Theorem~\ref{T:discrerr} and Corollary~\ref{C:nonrobest}.
These estimates show that if $\alpha > 0$ is fixed, we have
convergence rates $h^2+k^2$ except for the state error.
Invoking the regularization error,
one obtains estimates for the total error between the limit
problem and the discrete regularized one.
From this, a coupling rule for the parameters $\alpha$, $k$ and $h$
can be derived.

As an example, consider the error in the projected state
for the special case $\kappa = 1$. With the help of
Theorem~\ref{T:regmain},
and Corollary~\ref{C:nonrobest} we get with the inequality
\eqref{E:uoptregdecay} the estimate
\begin{multline}\label{E:totalprojstate}
%\begin{aligned}
   \norm{ \yopt_0 - \pi_{P_k^*}(\yoptd) }_I
   \le \norm{ \yopt_0 - \yopt_\alpha }_I
       + \norm{ \yopt_\alpha - \pi_{P_k^*}(\yoptd) }_I \\
   \le C(\alpha + k^2d_1^+ + \frac{k^2+h^2}{\sqrt\alpha}d_0)
   \le C (\alpha + \frac{k^2}{\alpha} + \frac{h^2}{\sqrt\alpha})
          (d_0+1),
%\end{aligned}
\end{multline}
which implies
$\norm{ \yopt_0 - \pi_{P_k^*}(\yoptd) }_I \le C k = Ch^{4/3}$
when setting $\alpha = k = h^{4/3}$.

However, if the decay estimate $d_1^+ \le \frac C\alpha$,
i.e., \eqref{E:uoptregdecay}, can be
improved, we can get a better convergence rate (with respect to
$k$) for the total error. 
In Lemma~\ref{L:smdecderiv} we saw that 
this is indeed possible.

Unfortunately, space convergence of order $h^2$ is not achievable
in the above mentioned estimates if $\alpha$ tends to zero
due to $\alpha$ appearing in the denominator.
To overcome this, we establish other estimates in the next
subsection. The question of improving the decay estimate
\eqref{E:uoptregdecay} is discussed in the next but one
subsection using the estimates of the next subsection.

\subsection{Robust error estimates}

All the previous estimates (except Corollary~\ref{C:stateerrrob1})
are not robust for $\alpha\to 0$, since
$\alpha$ appears always in a denominator on the right-hand side.
Especially, convergence of order $h^2$ is not achievable as discussed
at the end of the previous subsection.  With some refined analysis, 
however, one can show estimates which are robust with respect 
to $\alpha\to 0$. A key ingredient is Lemma~\ref{L:l1reg},
which was also very important for the derivation of the regularization
error.

Recall the notation from the beginning of subsection
\ref{sub:errestregprob}.

\begin{theo}\label{T:fullrobust}
  Let Assumption~\ref{A:sourcestruct} be fulfilled so that
  either \eqref{E:regconvumeas} or \eqref{E:regconvusourcemeas} from
  Theorem~\ref{T:regmain} holds.
  We denote the valid convergence rate for the control
  by $\alpha^{\omega_1}$.
  Then, either \eqref{E:regconvymeas} 
  or \eqref{E:regconvysourcemeas} is fulfilled.
  We abbreviate the corresponding convergence rate by
  $\alpha^{\omega_2}$.

  Let $\uopt_0$ be the solution of \eqref{OCPl}
  with associated state $\yopt_0$. For some $\alpha \ge 0$ let
  in addition
  $\uopt_d:=\uopt_{\alpha,kh}\in\Uad$ be a 
  solution of \eqref{OCPkh} with associated 
  discrete state $\yopt_d$ and adjoint state $\popt_d$.
  Then there holds 
\begin{multline}\label{E:fullrobustu}
   \norm{\uopt_0-\uopt_d}_{L^1(A)} 
         \le C( \alpha^{\omega_1} \\ 
   + \norm{ \dual B(p_{kh}-p) ( y(\uopt_d)-y_d )}
              _{L^\infty(A)}^{\kappa} 
   + \norm{  \dual B(p_{kh}-p) ( y(\uopt_d)-y_d )}
              _{L^1(A^c)}^{\frac{1}{1+1/\kappa}}\\
   +\norm{ \dual Bp_{kh} (y_{kh}(\uopt_d) -y(\uopt_d))}
                   _{L^\infty(A)}^{\kappa} 
   +\norm{ \dual Bp_{kh} (y_{kh}(\uopt_d) -y(\uopt_d)) }
            _{L^1(A^c)}^\frac{1}{1+1/\kappa} )
\end{multline}
for the error in the control and
\begin{multline}\label{E:fullrobusty}
   \norm{\yopt_0-\yopt_d}_I
        \le C(\alpha^{\omega_2}  \\
   + \norm{ \dual B(p_{kh}-p)  ( y(\uopt_d)-y_d )}
              _{L^\infty(A)}^{\frac{1+\kappa}{2}}
   + \norm{ \dual B(p_{kh}-p)  ( y(\uopt_d)-y_d )}
              _{L^1(A^c)}^{1/2}  \\
   + \norm{ \dual Bp_{kh} (y_{kh}(\uopt_d) -y(\uopt_d))}
                   _{L^\infty(A)}^{\frac{1+\kappa}{2}}
   + \norm{ \dual Bp_{kh} (y_{kh}(\uopt_d) -y(\uopt_d)) }
            _{L^1(A^c)}^{1/2}\\
   + \norm{  y_{kh}(\uopt_d) -y(\uopt_d) }_I )
\end{multline}
for the error in the state.
\end{theo}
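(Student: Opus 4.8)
The plan is to imitate the proof of the regularization estimate Theorem~\ref{T:regmain}, adding one discretization layer: Lemma~\ref{L:l1reg} serves as coercivity substitute on $A$, the bang-bang structure \eqref{E:bangbang} of $\uopt_0$ together with \eqref{E:struct} takes care of the part of $A$ where the adjoint is small, the source condition \eqref{E:source} handles $A^c$, and Theorem~\ref{T:regmain} supplies the $\alpha$-dependent part. Throughout I use the reduced quantities $\yopt_0=y(\uopt_0)$, $\popt_0=p(\yopt_0-y_d)$, $\yopt_d=y_{kh}(\uopt_d)$, $\popt_d=p_{kh}(\yopt_d-y_d)$, the identities $\dual B\,p(\cdot)=\dual T(\cdot)$ and $y(v)-y(w)=T(v-w)$, and the shorthand
\[
  \delta_1:=(p-p_{kh})\bigl(y(\uopt_d)-y_d\bigr),\qquad
  \delta_2:=p_{kh}\bigl(y_{kh}(\uopt_d)-y(\uopt_d)\bigr),
\]
so that the four error quantities on the right of \eqref{E:fullrobustu}--\eqref{E:fullrobusty} are exactly $\norm{\dual B\delta_1}_{L^\infty(A)}$, $\norm{\dual B\delta_1}_{L^1(A^c)}$, $\norm{\dual B\delta_2}_{L^\infty(A)}$, $\norm{\dual B\delta_2}_{L^1(A^c)}$. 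With $\tilde p:=p(y(\uopt_d)-y_d)$, linearity gives $\popt_0-\popt_d=p(T(\uopt_0-\uopt_d))+\delta_1-\delta_2$ and $\tilde p-\popt_d=\delta_1-\delta_2$, while $\popt_0-\tilde p=p(T(\uopt_0-\uopt_d))$ has image $\dual T T(\uopt_0-\uopt_d)$ under $\dual B$.

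The first step is a \emph{master inequality} with a common right-hand side. Testing \eqref{VarIneqkh} with $u=\uopt_0$ yields $(\dual B\popt_d,\uopt_d-\uopt_0)_U\le\alpha(\uopt_d,\uopt_0-\uopt_d)_U$, and testing \eqref{VarIneq} with $\alpha=0$ and $u=\uopt_d$ yields $(\dual B\popt_0,\uopt_0-\uopt_d)_U\le0$. Inserting $\popt_d$ and the above decompositions, and discarding the coercive contribution $(\dual B\,p(T(\uopt_0-\uopt_d)),\uopt_d-\uopt_0)_U=-\norm{T(\uopt_0-\uopt_d)}_I^2\le0$, one checks that both $C\norm{\uopt_d-\uopt_0}_{L^1(A)}^{1+1/\kappa}$ (via Lemma~\ref{L:l1reg} with $u=\uopt_d$) and $\norm{T(\uopt_0-\uopt_d)}_I^2$ (via $\norm{T(\uopt_0-\uopt_d)}_I^2=(\dual B(\popt_0-\tilde p),\uopt_0-\uopt_d)_U$) are bounded by
\[
  \alpha(\uopt_d,\uopt_0-\uopt_d)_U+\bigl(\dual B(\delta_1-\delta_2),\,\uopt_d-\uopt_0\bigr)_U .
\]
The second term is split over $A$ (Hölder with $L^\infty(A)\times L^1(A)$) and over $A^c$ (Hölder with $L^1(A^c)\times L^\infty(A^c)$, the last factor $\le\norm a_{L^\infty(\Omega_U)}+\norm b_{L^\infty(\Omega_U)}$), i.e.\ it is $\le(E_1+E_3)\norm{\uopt_d-\uopt_0}_{L^1(A)}+C(E_2+E_4)$ in the notation above; the $L^1(A)$-factor is then absorbed into the left-hand side by Young's inequality with conjugate exponents $1+1/\kappa$ and $\kappa+1$, producing $E_1^{\kappa+1},E_3^{\kappa+1}$.

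It remains to dispose of $\alpha(\uopt_d,\uopt_0-\uopt_d)_U\le\alpha(\uopt_0,\uopt_0-\uopt_d)_U$ robustly. In the first case of Theorem~\ref{T:regmain} ($\meas(A^c)=0$) this is $\le C\alpha\norm{\uopt_0-\uopt_d}_{L^1(A)}$, which is again absorbed by the same Young step, leaving only $C\alpha^{\kappa+1}$; in the second case the $A^c$-part is treated with the pointwise projection inequality coming from \eqref{E:source} and the $L^2$-rate \eqref{E:regconvusourcemeas2}. Taking the $\tfrac{\kappa}{\kappa+1}$-th power of the control inequality gives $\norm{\uopt_d-\uopt_0}_{L^1(A)}\le C(\alpha^{\kappa}+E_1^\kappa+E_3^\kappa+E_2^{1/(1+1/\kappa)}+E_4^{1/(1+1/\kappa)})$, which, since $\kappa\ge\omega_1$ and using $\norm{\uopt_0-\uopt_\alpha}_{L^1(A)}\le C\alpha^{\omega_1}$ from Theorem~\ref{T:regmain}, yields \eqref{E:fullrobustu}. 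For \eqref{E:fullrobusty} I square-root the state version of the master inequality (turning $E_1^{\kappa+1},E_3^{\kappa+1}$ into $E_1^{(1+\kappa)/2},E_3^{(1+\kappa)/2}$ and $E_2,E_4$ into $E_2^{1/2},E_4^{1/2}$), estimate the leftover $L^1(A)$-factor of the $A$-pairing by substituting the control estimate just obtained (several tuned Young steps then reproduce exactly the stated exponents, in particular $\alpha^{\omega_2}$), and conclude with $\norm{\yopt_0-\yopt_d}_I\le\norm{T(\uopt_0-\uopt_d)}_I+\norm{y_{kh}(\uopt_d)-y(\uopt_d)}_I$ together with $\norm{\yopt_0-\yopt_\alpha}_I\le C\alpha^{\omega_2}$.

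The main obstacle I expect is precisely keeping this robust and self-consistent. In the first case of Theorem~\ref{T:regmain} the argument above is essentially clean and acyclic, but in the second case the $A^c$-contributions of the $\alpha$-remainder re-introduce $\norm{T(\uopt_0-\uopt_d)}_I$ (through the source-condition step), so \eqref{E:fullrobustu} and \eqref{E:fullrobusty} genuinely feed into each other — through $\norm{T(\uopt_0-\uopt_d)}_I$ and through $\norm{\uopt_0-\uopt_d}_{L^1(A)}$ — and have to be set up as one coupled system and resolved simultaneously. The delicate point is to choose the conjugate exponents in the numerous Young's inequalities so that no negative power of $\alpha$ survives and the crude bound $\alpha(\uopt_d,\uopt_0-\uopt_d)_U\le C\alpha$ is nowhere used: it must be replaced, using the bang-bang structure on $A$, \eqref{E:source} on $A^c$, and the regularization rates, by a bound compatible with $\alpha^{\omega_1}$ (respectively $\alpha^{\omega_2}$), and verifying that this works for all admissible $\kappa$ and in both cases is the crux.
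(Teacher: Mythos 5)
Your proposal is correct and follows essentially the same route as the paper: Lemma~\ref{L:l1reg} plus the discrete variational inequality \eqref{VarIneqkh} yield one master inequality with $\norm{\uopt_0-\uopt_d}_{L^1(A)}^{1+1/\kappa}$ and $\norm{T(\uopt_0-\uopt_d)}_I^2$ on the left, the adjoint difference is decomposed into the coercive part plus the two perturbations $\delta_1,\delta_2$, these are split over $A$ and $A^c$ by H\"older and Young, and the $\alpha\uopt_0$-term is handled via the source condition on $A^c$ and absorbed. The only (harmless) slips are the intermediate claim of an $\alpha^{\kappa}$-term in case 2 of Theorem~\ref{T:regmain} --- the surviving term there is $\alpha^{2\omega_2\kappa/(1+\kappa)}=\alpha^{\omega_1}$, which is all the theorem asserts --- and the unnecessary detour through $\uopt_\alpha$ at the end.
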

\begin{proof}
To the estimate \eqref{E:l1reg} from Lemma~\ref{L:l1reg} 
with $u:=\uopt_d$, i.e.,
\begin{equation}
       C\norm{\uopt_d-\uopt_0}_{L^1(A)}^{1+1/\kappa} 
            \le (-\dual B\popt_0,\uopt_0-\uopt_d)_U,
\end{equation}
we add the necessary
condition \eqref{VarIneqkh} for $\uopt_d$ with $u:=\uopt_0$, 
which can be rewritten as
\begin{equation}
   \alpha \norm{\uopt_0-\uopt_d}_U^2
   \le 
   \left( \alpha\uopt_0 + \dual B\popt_d,\uopt_0-\uopt_d\right)_U. 
\end{equation}
We end up with
\begin{multline}\label{E:fullrobust1}
   \norm{\uopt_0-\uopt_d}^{1+1/\kappa}_{L^1(A)}
     + \alpha \norm{\uopt_0-\uopt_d}_U^2
     + \norm{y(\uopt_0)-y(\uopt_d)}_I^2\\
   \le C\Big(
          -\dual Bp(y(\uopt_d)-y_d) + \dual Bp_{kh}(y_{kh}(\uopt_d)-y_d)
                            + \alpha\uopt_0,\uopt_0-\uopt_d
                \Big)_U\\
   \le C\Big(
          \underbrace{\dual B(p_{kh}-p)(y(\uopt_d)-y_d)}_{I}
          + \underbrace{ \dual Bp_{kh}(y_{kh}(\uopt_d) -y(\uopt_d)) }_{II}
       \\
          + \underbrace{\alpha \uopt_0}_{III},
          \uopt_0-\uopt_d    \Big)_U.
\end{multline}
We now use \cite[Lemma~18]{dissnvd}, Cauchy's and Young's inequality
to estimate $III$ as
\begin{multline*}
   \alpha (\uopt_0, \uopt_0-\uopt_d)_U
        \le \alpha C\left( \norm{T(\uopt_d-\uopt_0)}_H 
              + \norm{\uopt_d-\uopt_0}_{L^1(A)}\right)\\
        \le C\alpha^2 + \frac 14\norm{T(\uopt_d-\uopt_0)}_H^2
            +C\alpha^{1+\kappa} 
          + \frac 14 \norm{\uopt_d-\uopt_0}
                     _{L^1(A)}^{1+1/\kappa}.
\end{multline*}
The $\alpha$-free terms can now be moved to the left, since
$\norm{T(\uopt_d-\uopt_0)}_H = \norm{y(\uopt_d)-y(\uopt_0)}_I$.
Note that $C\alpha^2$ can be omitted if $A=\Omega_U$ since by
Young's inequality we then get
      \begin{equation*}
        \alpha \norm{\uopt_{\alpha}-\uopt_0}_{L^1(\Omega_U)}
           \le C\alpha^{\kappa+1} + C
              \norm{\uopt_{\alpha}-\uopt_0}_{
                    L^1(\Omega_U)}^{1+1/\kappa}.
      \end{equation*}
Thus only the term $C\alpha^{2\,\omega_2}$ remains
on the right-hand side.

For $I$ and $II$, we proceed with the help of Young's inequality to 
obtain
\begin{equation*}
\begin{aligned}
 &(\sim,\uopt_0-\uopt_d)_U \\
  &= (\sim,\uopt_0-\uopt_d)_{L^2(A)} 
     +(\sim,\uopt_0-\uopt_d)_{L^2(A^c)} \\
  &\le C\norm{\sim}^{1+\kappa}_{L^\infty(A)}
     + \frac 14\norm{\uopt_0-\uopt_d}^{1+1/\kappa}_{L^1(A)} 
     +\norm{\sim}_{L^1(A^c)}
                 \norm{b-a}_{L^\infty(A^c)}
\end{aligned}
\end{equation*}
and move the second addend to the left. 

Finally, we end up with
\begin{multline*}
   \norm{\uopt_0-\uopt_d}^{1+1/\kappa}_{L^1(A)}
     + \alpha \norm{\uopt_0-\uopt_d}_U^2
     + \norm{y(\uopt_0)-y(\uopt_d)}_I^2
   \le C( \alpha^{2\,\omega_2}  \\
   \phantom{\le}  + \norm{  \dual B(p_{kh}-p)(y(\uopt_d)-y_d) }
              _{L^\infty(A)}^{1+\kappa} 
        + \norm{  \dual B(p_{kh}-p) (y(\uopt_d)-y_d) }
              _{L^1(A^c)}\\
   \phantom{\le}  + \norm{ \dual Bp_{kh}(y_{kh}(\uopt_d) -y(\uopt_d))}
                   _{L^\infty(A)}^{1+\kappa} 
    + \norm{  \dual Bp_{kh}(y_{kh}(\uopt_d) -y(\uopt_d)) } 
                     _{L^1(A^c)}.
\end{multline*}
From this we conclude the claim for the optimal control.

The just established estimate together with the decomposition
\[
   \norm{\yopt_0-\yopt_d}_I \le
      \norm{  y_{kh}(\uopt_d) -y(\uopt_d) }_I
     +  \norm{y(\uopt_d)-y(\uopt_0)}_I
\]
yields the claim for the optimal state.
\end{proof}
\begin{rema}\label{R:02alpha}
The error estimate \eqref{E:fullrobustu} in the previous Theorem
for $\alpha > 0$ is also valid if $\uopt_0$ is replaced by 
$\uopt_\alpha$, i.e., the
solution of \eqref{OCP} for some $\alpha > 0$,
since by Theorem~\ref{T:regmain} we can estimate
\begin{equation}
\begin{aligned}
   \norm{\uopt_\alpha-\uopt_d}_{L^1(A)} 
   &\le 
   \norm{\uopt_\alpha-\uopt_0}_{L^1(A)} 
   +\norm{\uopt_0-\uopt_d}_{L^1(A)} \\
   &\le C\alpha^{\omega_1}
      + \norm{\uopt_0-\uopt_d}_{L^1(A)}.
\end{aligned}
\end{equation}
Likewise, in \eqref{E:fullrobusty} the state  
$\yopt_0$ can be replaced by $\yopt_\alpha$.

We will make use of this fact in the proof of the next Theorem.
\end{rema}

In combination with the error estimates for the state and
adjoint state equations previously derived, we
can now prove a first error estimate between solutions of
\eqref{OCPkh} and \eqref{OCPl}, which is robust if $\alpha$
tends to zero. In view of the numerical verification,
we restrict ourselves now to the situation $A=\Omega_U$ and
located controls.
\begin{theo}\label{T:fullrobust2}
  Let the assumptions of Theorem~\ref{T:fullrobust}
  be fulfilled. Further, we assume located controls and
  $A=\Omega_U$ (measure condition on the whole domain).

  Then there hold
% with $d_0:=d_0(\uopt_d)+d_0(\uopt_\alpha)$ defined by 
%  \eqref{E:uypapriori} in Lemma~\ref{L:regOCPposalpha} 
  the estimates
  \begin{equation}\label{E:fullrobust2-1}
       \norm{\uopt_0-\uopt_d}_U^2 
     + \norm{\uopt_0-\uopt_d}_{L^1(\Omega_U)} 
       \le C \left( \alpha + h^2 + k \right)^\kappa  
            (1+d_0(\uopt_d)^\kappa)
  \end{equation}
  for the error in the control, for the auxiliary error
  \begin{equation}\label{E:fullrobust2-2}
    \norm{\yopt_d-y_{kh}(\uopt_\alpha)}^2_I 
    \le C 
    (h^2+k)d_0(\uopt_\alpha)
    \Big(
         \alpha^\kappa + (h^2+k)^\kappa d_0(\uopt_d)^\kappa 
    \Big)
  \end{equation}
  where by $\uopt_\alpha$ we denote the solution of \eqref{OCP},
  and
  \begin{equation}\label{E:fullrobust2-3}
   \norm{\yopt_d-\yopt_0}_I
        \le C \left( \alpha^{\frac{1+\kappa}{2}} 
                  + (h^2+k)^{\min(1,\frac{1+\kappa}{2})}
             \right)   
        \left( 1+d_0(\uopt_d)^{\min(1,\frac{1+\kappa}{2})} \right)
  \end{equation}
  for the error in the state. 

  If $\kappa >1$, we have the improved convergence rate
  \begin{equation}\label{E:fullrobust2-4}
   \norm{\yopt_d-\yopt_0}_I
    \le C(  \alpha^{\kappa} + h^2+k )
      (1+ \max \Big( d_0(\uopt_d)^\kappa, d_0(\uopt_\alpha) \Big) ),
  \end{equation}
  thus observe the regularization error \eqref{E:regconvymeasimp}.
\end{theo}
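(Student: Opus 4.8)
The plan is to combine the abstract robust bounds \eqref{E:fullrobustu}--\eqref{E:fullrobusty} of Theorem~\ref{T:fullrobust} with concrete discretization-error estimates for the parabolic solution operators. Because $\Omega_U=I$ for located controls, $A^c$ has measure zero, so every $L^1(A^c)$-term in \eqref{E:fullrobustu}--\eqref{E:fullrobusty} drops out, and by Theorem~\ref{T:regmain}.1 the rates are $\omega_1=\kappa$ and $\omega_2=(1+\kappa)/2$ (improved to $\kappa$ once $\kappa>1$).

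The first thing I would prove is a single workhorse estimate: for every $v\in\Uad$,
\[
 \norm{\dual B\bigl(p_{kh}(y(v)-y_d)-p(y(v)-y_d)\bigr)}_{L^\infty(I)}
 +\norm{\dual Bp_{kh}\bigl(y_{kh}(v)-y(v)\bigr)}_{L^\infty(I)}
 +\norm{y_{kh}(v)-y(v)}_I\le C(h^2+k)\,d_0(v).
\]
Since $g_1\in\V\subset\H$ gives $\abs{\dual Bq(t)}\le C\norm{q(t)}$, hence $\norm{\dual Bq}_{L^\infty(I)}\le C\norm{q}_{L^\infty(I,\H)}$, the first summand is controlled by the basic estimate of Lemma~\ref{L:errphklool2} applied to $p(y(v)-y_d)$, whose $H^1(I,H^2(\Omega)\cap\V)\cap H^2(I,\H)$ regularity and the bound $\norm{\Delta p_t}_I+\norm{p_t}_{L^\infty(I,\H)}\le C\,d_0(v)$ follow as in the proof of Lemma~\ref{L:regOCPposalpha} with $\uopt$ replaced by the admissible $v$ (note $d_0(v)$ depends on the control only through $\norm{Bv}_I$, which is uniformly bounded on $\Uad$); the second summand is bounded by $\norm{p_{kh}(y_{kh}(v)-y(v))}_{C(\bar I,\H)}\le C\norm{y_{kh}(v)-y(v)}_I$ via Lemma~\ref{L:AdjDiscrStabh}, and the last by \eqref{E:ykherr}, both yielding $C(h^2+k)d_0(v)$. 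It is essential here \emph{not} to invoke the superconvergence Lemma~\ref{L:ImprRatekh}, which would trade an extra power of $k$ for the non-robust constant $d_1$.

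Granting this estimate, the four claims follow by careful bookkeeping. For \eqref{E:fullrobust2-1}: insert it with $v=\uopt_d$ into \eqref{E:fullrobustu} to obtain $\norm{\uopt_0-\uopt_d}_{L^1(\Omega_U)}\le C(\alpha^\kappa+((h^2+k)d_0(\uopt_d))^\kappa)\le C(\alpha+h^2+k)^\kappa(1+d_0(\uopt_d)^\kappa)$, then add the $U$-norm squared on the left using $\abs{\uopt_0-\uopt_d}^2\le\norm{b-a}_{L^\infty(\Omega_U)}\abs{\uopt_0-\uopt_d}$, valid since both controls lie in $\Uad$. For \eqref{E:fullrobust2-2}: apply Lemma~\ref{L:errudu} with parameter $\alpha$; its right-hand side is the $U$-inner product of $\uopt_\alpha-\uopt_d$ against $\dual B\bigl(p_{kh}(\yopt_\alpha-y_d)-\popt_\alpha+p_{kh}(y_{kh}(\uopt_\alpha)-\yopt_\alpha)\bigr)$, which I would bound by the workhorse quantity for $v=\uopt_\alpha$ times $\norm{\uopt_\alpha-\uopt_d}_{L^1(\Omega_U)}$, where $\norm{\uopt_\alpha-\uopt_d}_{L^1(\Omega_U)}\le\norm{\uopt_\alpha-\uopt_0}_{L^1(\Omega_U)}+\norm{\uopt_0-\uopt_d}_{L^1(\Omega_U)}\le C\alpha^\kappa+C(\alpha+h^2+k)^\kappa(1+d_0(\uopt_d)^\kappa)$ by \eqref{E:regconvumeas} and \eqref{E:fullrobust2-1}; dropping the nonnegative $\alpha\norm{\uopt_\alpha-\uopt_d}_U^2$ on the left and using that $d_0$ is uniformly bounded on $\Uad$ yields \eqref{E:fullrobust2-2}. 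For \eqref{E:fullrobust2-3}: feed the workhorse estimate with $v=\uopt_d$ into \eqref{E:fullrobusty} with $\omega_2=(1+\kappa)/2$ and tidy up the powers of $h^2+k$ using $h,k<1$.

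The improved bound \eqref{E:fullrobust2-4} is where the sharper regularization rate enters: since \eqref{E:dualTlinfty} holds for the heat equation (as recorded after Theorem~\ref{T:regmain}), for $\kappa>1$ estimate \eqref{E:regconvymeasimp} gives $\norm{\yopt_\alpha-\yopt_0}_I\le C\alpha^\kappa$. I would then split $\norm{\yopt_0-\yopt_d}_I\le\norm{\yopt_0-\yopt_\alpha}_I+\norm{y(\uopt_\alpha)-y_{kh}(\uopt_\alpha)}_I+\norm{y_{kh}(\uopt_\alpha)-\yopt_d}_I$: the first term is $\le C\alpha^\kappa$, the second $\le C(h^2+k)d_0(\uopt_\alpha)$ by \eqref{E:ykherr}, and the third equals $\norm{\yopt_d-y_{kh}(\uopt_\alpha)}_I$, for which \eqref{E:fullrobust2-2} gives $\le C(h^2+k)^{1/2}d_0(\uopt_\alpha)^{1/2}\bigl(\alpha^{\kappa/2}+(h^2+k)^{\kappa/2}d_0(\uopt_d)^{\kappa/2}\bigr)$; Young's inequality turns $(h^2+k)^{1/2}\alpha^{\kappa/2}$ into $\tfrac12(h^2+k)+\tfrac12\alpha^\kappa$ and $d_0(\uopt_\alpha)^{1/2}d_0(\uopt_d)^{\kappa/2}$ into $\le\max(d_0(\uopt_\alpha),d_0(\uopt_d)^\kappa)$, while $(h^2+k)^{(1+\kappa)/2}\le h^2+k$ because $\kappa>1$ and $h,k<1$, so collecting terms gives \eqref{E:fullrobust2-4}. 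The main obstacle throughout is the disciplined bookkeeping rather than any single hard estimate: one must stay with the robust $d_0$-type regularity and stability bounds, resist the superconvergence estimate that would contaminate the constants with $d_1$, and track whether each adjoint/state discretization error is built from $\uopt_d$ or from $\uopt_\alpha$ so that the correct $d_0(\cdot)$ survives; the rest is routine Young/interpolation juggling of the exponents of $\alpha$ and $h^2+k$.
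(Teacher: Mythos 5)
Your proposal is correct and follows essentially the same route as the paper's proof: it combines Theorem~\ref{T:fullrobust} (with the $L^1(A^c)$ terms vanishing) with Lemma~\ref{L:errphklool2}, Lemma~\ref{L:AdjDiscrStabh} and \eqref{E:ykherr}, derives the $U$-error via the interpolation estimate \eqref{E:bangU21}, obtains the auxiliary bound \eqref{E:fullrobust2-2} from Lemma~\ref{L:errudu} together with the triangle-inequality argument of Remark~\ref{R:02alpha}, and gets \eqref{E:fullrobust2-4} by Cauchy--Schwarz/Young applied to \eqref{E:fullrobust2-2}. Your explicit remark that one must avoid the superconvergence Lemma~\ref{L:ImprRatekh} to keep the constants robust is exactly the point implicit in the paper's choice of lemmas.
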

\begin{proof}
Combining Theorem~\ref{T:fullrobust} with 
the adjoint error estimate in Lemma~\ref{L:errphklool2},
the adjoint stability from Lemma~\ref{L:AdjDiscrStabh},
the error estimate \eqref{E:ykherr} in Lemma~\ref{L:yDiscrStabh}, 
and the regularity given in Lemma~\ref{L:regOCPposalpha} and
Remark~\ref{R:regOCPalpha0},
we achieve \eqref{E:fullrobust2-1} and \eqref{E:fullrobust2-3}
except for the $U$ error in the control.
This error can be derived from the corresponding $L^1$ error 
by the estimate
\begin{equation}\label{E:bangU21}
\begin{aligned}
  \norm{\uopt_0-\uopt_d}_U^2 
   &\le \norm{\uopt_0-\uopt_d}_{L^\infty(\Omega_U)}
       \norm{\uopt_0-\uopt_d}_{L^1(\Omega_U)}\\
   &\le \norm{b-a}_{L^\infty(\Omega_U)}
       \norm{\uopt_0-\uopt_d}_{L^1(\Omega_U)},
\end{aligned}
\end{equation}
which follows immediately from standard $L^p$ interpolation,
see, e.g., \cite[Theorem 2.11]{adams}, and the definition of $\Uad$.

Let us now tackle the improved state convergence, thereby proving
the estimate \eqref{E:fullrobust2-2}. We split the error into
three parts and obtain with the help of \eqref{E:regconvymeasimp} 
and the error estimate \eqref{E:ykherr} from Lemma~\ref{L:yDiscrStabh} 
\begin{equation*}
\begin{aligned}
 &\norm{\yopt_d-\yopt_0}_I^2\\
 &\le C\Big( \norm{\yopt_d-y_{kh}(\uopt_\alpha)}_I^2
    +\norm{y_{kh}(\uopt_\alpha)-y(\uopt_\alpha) }_I^2
    +\norm{y(\uopt_\alpha)-y(\uopt_0)}_I^2
      \Big) \\
 &\le C\Big( 
     \norm{\yopt_d-y_{kh}(\uopt_\alpha)}_I^2
    + (h^2+k)^2 d_0^2(\uopt_\alpha)
    + \alpha^{2\kappa} \Big),
\end{aligned}
\end{equation*}
where we also used \eqref{E:uypapriori} from Lemma~\ref{L:regOCPposalpha}.

For the remaining term, we invoke Lemma~\ref{L:errudu} in combination
with \eqref{E:fullrobustu} and Remark~\ref{R:02alpha}
and setting $L:=L^\infty(I,\H)$ we obtain with the stability of $\dual B$ 
for located controls
\begin{equation}\label{E:supconvykhualpha}
\begin{aligned}
&\norm{\yopt_d-y_{kh}(\uopt_\alpha)}^2_I\\ 
  &\le C 
    \Big(
     \norm{ p_{kh}(\yopt_\alpha-y_d)-\popt_\alpha }_L
    + \norm{ p_{kh}(y_{kh}(\uopt_\alpha)-\yopt_\alpha) }_L
    \Big)
    \norm{ \uopt_\alpha-\uopt_d }_{L^1(\Omega_U)}\\
  &\le C
    \Big(
     \norm{ p_{kh}(\yopt_\alpha-y_d)-\popt_\alpha }_L
    + \norm{ p_{kh}(y_{kh}(\uopt_\alpha)-\yopt_\alpha) }_L
    \Big)\cdot\\
    &\phantom{\le}\quad\quad
    \Big( \alpha^\kappa
       +\norm{ (p_{kh}-p) ( y(\uopt_d)-y_d )}_{L}^{\kappa} 
       +\norm{ p_{kh} (y_{kh}(\uopt_d) -y(\uopt_d))}_{L}^{\kappa}
    \Big). 
\end{aligned}
\end{equation}
Invoking again Lemma~\ref{L:errphklool2}, Lemma~\ref{L:AdjDiscrStabh},
estimate \eqref{E:ykherr} from Lemma~\ref{L:yDiscrStabh}, 
and Lemma~\ref{L:regOCPposalpha}, we get
\begin{equation*}
\norm{\yopt_d-y_{kh}(\uopt_\alpha)}^2_I 
\le C 
    (h^2+k)d_0(\uopt_\alpha)
    \Big( \alpha^\kappa + (h^2+k)^\kappa d_0^\kappa(\uopt_d) 
    \Big),
\end{equation*}
which is the auxiliary estimate \eqref{E:fullrobust2-2} of the statement.

If $\kappa > 1$, we can use the Cauchy-Schwarz inequality to get from it
the estimate
\begin{equation*}
\norm{\yopt_d-y_{kh}(\uopt_\alpha)}^2_I 
\le C \Big(
    (h^2+k)^2d_0^2(\uopt_\alpha)
     + \alpha^{2\kappa}
     + (h^2+k)^{1+\kappa} d_0(\uopt_\alpha) d_0^\kappa(\uopt_d)
     \Big).
\end{equation*}
Since $\kappa > 1$, collecting all estimates yields the inequality
  \begin{equation*}
   \norm{\yopt_d-\yopt_0}_I^2
    \le C(  \alpha^{2\kappa} + (h^2+k)^2 
      \max \Big( d_0^{2\kappa}(\uopt_d) , d_0^2(\uopt_\alpha) \Big) ),
  \end{equation*}
from which we finally get \eqref{E:fullrobust2-4}.
\end{proof}

\begin{coro}
 Let the assumptions of the previous theorem hold.
 For the adjoint state we have the error estimate
\begin{equation*}
  \norm{\popt_0-\popt_d}_{L^\infty(I,\H)}
    \le C( \alpha^{\max(\frac{1+\kappa}{2}, \kappa) } +  
      (k+ h^2)^{\min(1,\frac{1+\kappa}{2})}C(\uopt_d,\uopt_\alpha))
\end{equation*}
with $ C(\uopt_d,\uopt_\alpha) =
   \max(1,d_0(\uopt_d) , d_0(\uopt_\alpha))
   ^{\max(1,\frac{1+\kappa}{2})}$.
\end{coro}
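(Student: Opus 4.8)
The plan is to route the adjoint error through the continuous regularized adjoint state $\popt_\alpha$ (with the same $\alpha$ as in Theorem~\ref{T:fullrobust2}), in the spirit of Remark~\ref{R:02alpha}, and to recombine estimates already assembled for Theorem~\ref{T:fullrobust2} and Corollary~\ref{C:nonrobest}. Setting $L:=L^\infty(I,\H)$ and using $\popt_\alpha=p(\yopt_\alpha-y_d)$, $\popt_d=p_{kh}(\yopt_d-y_d)$ together with the linearity of $p_{kh}$ in its right-hand side, I would start from
\[
   \norm{\popt_0-\popt_d}_L
   \le \underbrace{\norm{\popt_0-\popt_\alpha}_L}_{(i)}
     + \underbrace{\norm{\popt_\alpha-p_{kh}(\yopt_\alpha-y_d)}_L}_{(ii)}
     + \underbrace{\norm{p_{kh}(\yopt_\alpha-\yopt_d)}_L}_{(iii)}.
\]
For $(i)$ I write $\popt_0-\popt_\alpha=p(\yopt_0-\yopt_\alpha)$ and use stability of the continuous adjoint equation (a consequence of \eqref{E:stabS} after reversing time; compare Lemma~\ref{L:AdjDiscrStabh}) to get $\norm{\popt_0-\popt_\alpha}_L\le C\norm{\yopt_0-\yopt_\alpha}_I$, then invoke \eqref{E:regconvymeas}, sharpened to \eqref{E:regconvymeasimp} when $\kappa>1$ (admissible since \eqref{E:dualTlinfty} holds for the parabolic problem); this contributes $C\alpha^{\max((1+\kappa)/2,\kappa)}$ and carries no $d_0$-factor. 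For $(ii)$ — an adjoint consistency error with the \emph{smooth} right-hand side $\yopt_\alpha-y_d$ — I apply the first estimate of Lemma~\ref{L:errphklool2} (legitimate since $\popt_\alpha\in H^1(I,H^2(\Omega)\cap\V)\cap H^2(I,\H)$ by Lemma~\ref{L:regOCPposalpha}) and bound $\norm{\Delta\partial_t\popt_\alpha}_I+\norm{\partial_t\popt_\alpha}_L$ by $C\,d_0(\uopt_\alpha)$ using the $d_0$-line of \eqref{E:uypapriori} and Friedrichs' inequality, yielding $C(h^2+k)\,d_0(\uopt_\alpha)$.

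Term $(iii)$ is the heart of the matter. Applying the discrete stability of Lemma~\ref{L:AdjDiscrStabh} it reduces to $C\norm{\yopt_\alpha-\yopt_d}_I\le C\norm{\yopt_\alpha-y_{kh}(\uopt_\alpha)}_I+C\norm{y_{kh}(\uopt_\alpha)-\yopt_d}_I$, where the first summand is $\le C(h^2+k)d_0(\uopt_\alpha)$ by \eqref{E:ykherr} and Lemma~\ref{L:regOCPposalpha}. For the second summand I distinguish cases. If $\kappa\le 1$, I bound it by $\norm{y_{kh}(\uopt_\alpha)-\yopt_\alpha}_I+\norm{\yopt_\alpha-\yopt_0}_I+\norm{\yopt_0-\yopt_d}_I$ and apply \eqref{E:fullrobust2-3}, whose regularization rate $\alpha^{(1+\kappa)/2}$ already matches the asserted exponent $\max((1+\kappa)/2,\kappa)=(1+\kappa)/2$; one must \emph{not} go through \eqref{E:fullrobust2-2} here, since its $\alpha^\kappa$-term would produce $\alpha^{\kappa/2}$, which for $\kappa<1$ exceeds $\alpha^{(1+\kappa)/2}$. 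If $\kappa>1$, I instead use \eqref{E:fullrobust2-2} in its original, pre--Cauchy--Schwarz form, because it carries the \emph{balanced} product $d_0(\uopt_\alpha)^{1/2}d_0(\uopt_d)^{\kappa/2}$ (exponents summing to $(1+\kappa)/2$) rather than the overshooting $d_0(\uopt_d)^{\kappa}$ of \eqref{E:fullrobust2-4}; the remaining cross term $\sqrt{(h^2+k)d_0(\uopt_\alpha)}\,\alpha^{\kappa/2}$ is then split by Young's inequality into $\tfrac12(h^2+k)d_0(\uopt_\alpha)+\tfrac12\alpha^{\kappa}$, which is harmless precisely because $\alpha^{\kappa}=\alpha^{\max((1+\kappa)/2,\kappa)}$ for $\kappa>1$.

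Finally I collect all contributions, use $h^2+k<1$ and the monotonicity of $t\mapsto t^s$ on $[1,\infty)$ to replace the separate $d_0(\uopt_\alpha)$- and $d_0(\uopt_d)$-powers by a single factor $\max(1,d_0(\uopt_d),d_0(\uopt_\alpha))^{\max(1,(1+\kappa)/2)}$ multiplying $(k+h^2)^{\min(1,(1+\kappa)/2)}$, and keep the $\alpha$-contributions $\alpha^{\max((1+\kappa)/2,\kappa)}$ separate. The main obstacle is exactly this bookkeeping: recognizing that \emph{different} auxiliary state estimates from Theorem~\ref{T:fullrobust2} must be used in the regimes $\kappa\le1$ (use \eqref{E:fullrobust2-3}) and $\kappa>1$ (use the pre--Cauchy--Schwarz \eqref{E:fullrobust2-2}) so that neither the $\alpha$-exponent nor the $d_0$-exponent is overshot, and then carrying out the Young-type reductions of the mixed terms so that everything collapses into the stated two-term form.
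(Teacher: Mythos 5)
Your argument is correct and follows essentially the same route as the paper's proof: reduce to $\norm{\popt_\alpha-\popt_d}_{L^\infty(I,\H)}\le C\bigl((k+h^2)d_0(\uopt_\alpha)+\norm{y_{kh}(\uopt_\alpha)-\yopt_d}_I\bigr)$ by inspecting the proof of Corollary~\ref{C:nonrobest}, treat the remaining coupling term with the auxiliary estimate \eqref{E:fullrobust2-2}, and append the regularization errors \eqref{E:regconvymeas} and \eqref{E:regconvymeasimp}. Your case distinction at $\kappa=1$ (switching to \eqref{E:fullrobust2-3} for $\kappa\le 1$) is a valid workaround but not needed: the mixed term $\sqrt{(h^2+k)d_0(\uopt_\alpha)\,\alpha^{\kappa}}$ can be split by Young's inequality with exponents $1+\kappa$ and $(1+\kappa)/\kappa$, which produces $\alpha^{(1+\kappa)/2}$ together with $\bigl((h^2+k)d_0(\uopt_\alpha)\bigr)^{(1+\kappa)/2}$ rather than the overshooting $\alpha^{\kappa}$ you were guarding against, so \eqref{E:fullrobust2-2} alone suffices for all $\kappa$.
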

\begin{proof}
Inspecting the proof of Corollary~\ref{C:nonrobest}, we get the
estimate
\begin{equation*}
  \norm{\popt_\alpha-\popt_d}_{L^\infty(I,\H)}
    \le C((k+ h^2)d_0(\uopt_\alpha)
            + \norm{y_{kh}(\uopt_\alpha)-\yopt_d}_I).
\end{equation*}
The last addend can be estimated with the auxiliary estimate
\eqref{E:fullrobust2-2} from the previous theorem and Cauchy's
inequality. We obtain
\begin{equation*}
     \norm{\popt_\alpha-\popt_d}_{L^\infty(I,\H)}
    \le C( \alpha^{\max(\frac{1+\kappa}{2}, \kappa) } +  
      (k+ h^2)^{\min(1,\frac{1+\kappa}{2})}C(\uopt_d,\uopt_\alpha)).
\end{equation*}
Invoking the regularization errors \eqref{E:regconvymeas} and
\eqref{E:regconvymeasimp} proves the claim.
\end{proof}

\subsection{Improved estimates for bang-bang controls}

As motivated at the end of subsection \ref{sub:errestregprob},
improving the decay estimate \eqref{E:uoptregdecay} 
with the help of Lemma~\ref{L:smdecderiv} 
leads to improved (non-robust) error estimates. 
However, the convergence rate $h^2$ is not achievable
in these estimates, but the robust estimates from 
Theorem~\ref{T:fullrobust} overcome this problem. 
On the other hand, in Theorem~\ref{T:fullrobust} we have $\uopt_d$
on the right-hand side instead of $\uopt_\alpha$, so that
Lemma~\ref{L:smdecderiv} can not be directly applied.
Therefor, we have to estimate some additional terms in combination
with Theorem~\ref{T:fullrobust} to finally get the desired improved
estimates.

\begin{theo}\label{T:uimprobust}
  Let the assumptions of Theorem~\ref{T:fullrobust}
  be fulfilled. Further, we assume located controls and
  $A=\Omega_U$ up to a set of measure zero
  (measure condition on the whole domain).
  If $\kappa < 1$, we additionally require
  the $\popt_\alpha$-measure condition \eqref{E:struct2}.
  (For $\kappa \ge 1$, this condition is automatically met as shown in
  \cite[Lemma~26]{dissnvd}.)

  Then, for $\alpha >0$ sufficiently small,
  $d_0:=d_0(\uopt_\alpha)$ given as in Theorem~\ref{T:discrerr}, and 
  $C_{ab}$ defined in Lemma~\ref{L:smdecderiv} it holds
  \begin{multline*}
       \norm{\uopt_0-\uopt_d}_U^2 
     + \norm{\uopt_0-\uopt_d}_{L^1(\Omega_U)} \\
       \le C \left( \alpha + h^2 + k^2
      \max(1,C_{ab},\alpha^{\kappa/2-1}) 
      \right)^\kappa  (1+d_0^\kappa)
  \end{multline*}
  for the error in the control.
\end{theo}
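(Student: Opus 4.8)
Throughout write $\theta:=h^2+k^2\max(1,C_{ab},\alpha^{\kappa/2-1})$ and $d_0:=d_0(\uopt_\alpha)$, so that the asserted bound is $\norm{\uopt_0-\uopt_d}_U^2+\norm{\uopt_0-\uopt_d}_{L^1(\Omega_U)}\le C(\alpha+\theta)^\kappa(1+d_0^\kappa)$. The plan is to feed superconvergence estimates for the discrete state and adjoint into the robust control bound \eqref{E:fullrobustu} of Theorem~\ref{T:fullrobust}, using the smoothness-decay Lemma~\ref{L:smdecderiv} to keep the constants almost robust in $\alpha$. Since $\meas(A^c)=0$, the $L^1(A^c)$-terms in \eqref{E:fullrobustu} drop out and $\omega_1=\kappa$ by \eqref{E:regconvumeas}, so it suffices to bound, each raised to the power $\kappa$, the quantities $\norm{\dual B(p_{kh}-p)(y(\uopt_d)-y_d)}_{L^\infty(\Omega_U)}$ and $\norm{\dual B p_{kh}(y_{kh}(\uopt_d)-y(\uopt_d))}_{L^\infty(\Omega_U)}$. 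In each I split $\uopt_d=\uopt_\alpha+\delta$, $\delta:=\uopt_d-\uopt_\alpha$, and use linearity of $h\mapsto(p_{kh}-p)(h)$ and of $v\mapsto S_{kh}(Bv,0)-S(Bv,0)$ to separate an ``$\uopt_\alpha$-part'' from a ``$\delta$-part''.

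The $\uopt_\alpha$-parts are $p_{kh}(\yopt_\alpha-y_d)-\popt_\alpha$ and $p_{kh}(y_{kh}(\uopt_\alpha)-y(\uopt_\alpha))$. To the first I apply the improved $L^\infty(I,\H)$ estimate of Lemma~\ref{L:errphklool2} (legitimate since $\popt_\alpha\in H^3(I,\H)\cap H^2(I,H^2(\Omega)\cap\V)$ by Lemma~\ref{L:regOCPposalpha}), to the second the superconvergence bound $\norm{p_{kh}(y_{kh}-y)}_{C(\bar I,\H)}\le C(k^2F_1+h^2F_2)$ of Lemma~\ref{L:ImprRatekh} (legitimate since $\uopt_\alpha\in W^{1,\infty}(I)$ gives $B\uopt_\alpha\in H^1(I,\H)$ with $B\uopt_\alpha(0)\in\V$, while $y_0,\Delta y_0\in\V$). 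The decisive point: the arising $k^2$-coefficients $d_1^+(\uopt_\alpha)$, respectively $F_1(B\uopt_\alpha,y_0)$, consist of bounded data-terms (by Assumption~\ref{A:Regularity} and $\uopt_\alpha\in L^\infty$) together with $\norm{\partial_t B\uopt_\alpha}_I=\norm{g_1}_\H\norm{\partial_t\uopt_\alpha}_{L^2(I)}$, which by Lemma~\ref{L:smdecderiv} with $p=2$ is $\le C\max(C_{ab},\alpha^{\kappa/2-1})$ (for $\kappa<1$ this uses the extra $\popt_\alpha$-measure condition). Applying $\dual B$, which is continuous $\LIIH\to L^\infty(I)$ as $g_1\in\V\subset\H$, then bounds both $\uopt_\alpha$-parts by $C\theta(1+d_0)$.

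For the $\delta$-parts only $\delta\in L^\infty(I)$ is available: $\dual B(p_{kh}-p)(S(B\delta,0))$ is treated with the first (lower-order) bound of Lemma~\ref{L:errphklool2}, after noting $S(B\delta,0)\in H^1(I,\H)\cap L^2(I,H^2(\Omega)\cap\V)$ by standard parabolic regularity so that $p(S(B\delta,0))$ has the regularity needed there, and $\dual B p_{kh}(S_{kh}(B\delta,0)-S(B\delta,0))$ with stability of $p_{kh}$ (Lemma~\ref{L:AdjDiscrStabh}) and the state error \eqref{E:ykherr}; both come out $\le C(h^2+k)\norm{\delta}_U$. Inserting everything into \eqref{E:fullrobustu} and using $(a+b)^\kappa\le C(a^\kappa+b^\kappa)$,
\[
  \norm{\uopt_0-\uopt_d}_{L^1(\Omega_U)}\le C\alpha^\kappa+C\theta^\kappa(1+d_0^\kappa)+C(h^2+k)^\kappa\norm{\uopt_d-\uopt_\alpha}_U^\kappa .
\]
Then I bootstrap: the already-established \eqref{E:fullrobust2-1} (with $d_0(\uopt_d)$ bounded) gives $\norm{\uopt_0-\uopt_d}_U\le C(\alpha+h^2+k)^{\kappa/2}$, hence with \eqref{E:regconvumeas2} also $\norm{\uopt_d-\uopt_\alpha}_U\le C(\alpha+h^2+k)^{\kappa/2}$, so the last term is $\le C(\alpha+h^2+k)^{\kappa(2+\kappa)/2}$; and $\norm{\uopt_0-\uopt_d}_U^2\le\norm{b-a}_{L^\infty(\Omega_U)}\norm{\uopt_0-\uopt_d}_{L^1(\Omega_U)}$ (as in \eqref{E:bangU21}) supplies the $U$-norm term.

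I expect the main obstacle to be the final bookkeeping: checking, uniformly in small $\alpha>0$, that $C(\alpha+h^2+k)^{\kappa(2+\kappa)/2}\le C(\alpha+\theta)^\kappa(1+d_0^\kappa)$. Splitting the power, the $\alpha$- and $h$-contributions are immediate (since $\tfrac{2+\kappa}{2}>1$ and $\alpha,h<1$), and the only critical one is $k^{(2+\kappa)/2}$; this is $\le k^{4/(4-\kappa)}$ because $\tfrac{2+\kappa}{2}\ge\tfrac{4}{4-\kappa}$, i.e., $\kappa(2-\kappa)\ge0$ (for $\kappa\le2$; for $\kappa>2$ the estimate is immediate from $\tfrac{2+\kappa}{2}>2$), and $\alpha+k^2\alpha^{\kappa/2-1}\ge ck^{4/(4-\kappa)}$ for every $\alpha>0$ while $k^2\alpha^{\kappa/2-1}\le\theta$, so $k^{(2+\kappa)/2}\le C(\alpha+\theta)$. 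A secondary care point is confirming that each $\uopt_\alpha$-part genuinely meets the regularity hypotheses of Lemmas~\ref{L:errphklool2} and \ref{L:ImprRatekh}, and that the constants $d_1^+(\uopt_\alpha)$ and $F_1(B\uopt_\alpha,y_0)$ indeed collapse to $\max(1,C_{ab},\alpha^{\kappa/2-1})$ after Lemma~\ref{L:smdecderiv}.
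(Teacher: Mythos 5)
Your proposal is correct, and it arrives at the stated bound by a genuinely different route precisely at the step where the paper works hardest. Both arguments rest on the same pillars --- the robust $L^1$ bound rooted in Lemma~\ref{L:l1reg}, the adjoint/superconvergence estimates of Lemmas~\ref{L:errphklool2}, \ref{L:ImprRatekh}, \ref{L:AdjDiscrStabh}, \ref{L:yDiscrStabh}, and Lemma~\ref{L:smdecderiv} to replace the $1/\alpha$ decay of $d_1^+(\uopt_\alpha)$ by $\max(1,C_{ab},\alpha^{\kappa/2-1})$ --- but they diverge on how to remove the dependence on $\uopt_d$ from the right-hand side of Theorem~\ref{T:fullrobust}. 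The paper goes back \emph{inside} the proof of that theorem: it rearranges the duality pairing \eqref{E:fullrobust1} into the terms $I$, $IIa$, $IIb$, $III$ so that every discretization error is evaluated at $\uopt_\alpha$ (or handled via the regularization error) from the start, absorbs the remainders into the left-hand side with Young's inequality, and lands directly on \eqref{E:bestrobust}. You instead use the \emph{statement} \eqref{E:fullrobustu} as a black box, split each error term by linearity into an $\uopt_\alpha$-part (treated exactly as in the paper) and a $\delta$-part with $\delta=\uopt_d-\uopt_\alpha$, bound the $\delta$-parts by $C(h^2+k)\norm{\delta}_U$ with the lower-order estimates, and then bootstrap $\norm{\delta}_U\le C(\alpha+h^2+k)^{\kappa/2}$ from the already-proven \eqref{E:fullrobust2-1} together with \eqref{E:regconvumeas2}. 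The resulting extra term $(\alpha+h^2+k)^{\kappa(2+\kappa)/2}$ then has to be compared with $(\alpha+\theta)^\kappa$, and your bookkeeping --- $k^{(2+\kappa)/2}\le k^{4/(4-\kappa)}\le C(\alpha+k^2\alpha^{\kappa/2-1})$, obtained by minimizing $\alpha\mapsto\alpha+k^2\alpha^{\kappa/2-1}$, with the case $\kappa\ge 2$ handled separately --- does check out. What the paper's rearrangement buys is that no bootstrap and no exponent comparison are needed, and the whole argument closes in one Young-inequality pass; what your version buys is modularity (Theorem~\ref{T:fullrobust} and Theorem~\ref{T:fullrobust2} are reused as stated) at the price of one additional regularity check, namely that $p(S(B\delta,0))$ meets the hypotheses of Lemma~\ref{L:errphklool2}; this does hold, since $S(B\delta,0)\in H^1(I,\H)$ with $S(B\delta,0)(T)\in\V$, so the gap you flag as a ``secondary care point'' is genuinely closable.
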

\begin{proof}
   Let us recall the estimate \eqref{E:fullrobust1} from the
   proof of Theorem~\ref{T:fullrobust}, i.e.,
   \begin{equation*}   % copy of E:fullrobust1
   \begin{aligned}
      &\norm{\uopt_0-\uopt_d}^{1+1/\kappa}_{L^1(A)}
        + \alpha \norm{\uopt_0-\uopt_d}_U^2
        + \norm{y(\uopt_0)-y(\uopt_d)}_I^2\\
      &\le C\Big(
             -\dual Bp(y(\uopt_d)-y_d) 
                + \dual Bp_{kh}(y_{kh}(\uopt_d)-y_d)
                               + \alpha\uopt_0,\uopt_0-\uopt_d
                   \Big)_U,
   \end{aligned}
   \end{equation*}
   which we rearrange as follows:
   \begin{equation}\label{E:follrobust1rea}
   \begin{aligned}
      &\norm{\uopt_0-\uopt_d}^{1+1/\kappa}_{L^1(A)}
        + \alpha \norm{\uopt_0-\uopt_d}_U^2
        + \norm{y_{kh}(\uopt_0)-y_{kh}(\uopt_d)}_I^2\\
      &\le C\Big(
       \underbrace{
          -\dual Bp(y(\uopt_0)-y(\uopt_\alpha)) 
        }_I
       \underbrace{
        -\dual Bp(y(\uopt_\alpha)-y_d) 
                + \dual Bp_{kh}(y_{kh}(\uopt_\alpha)-y_d)
        }_{IIa} \\
       &\quad\quad
       \underbrace{
          + \alpha\uopt_0
        }_{IIb}
       \underbrace{
         + \dual Bp_{kh}(y_{kh}(\uopt_0)-y_{kh}(\uopt_\alpha))
        }_{III}
                  ,\uopt_0-\uopt_d \Big)_U.
   \end{aligned}
   \end{equation}
For term $III$, we use the optimality conditions together with
Cauchy's inequality to get
  \begin{multline*}
  %\begin{equation*}
  %\begin{aligned}
     (y_{kh}(\uopt_0)-y_{kh}(\uopt_\alpha))
      ,y_{kh}(\uopt_0)-y_{kh}(\uopt_d) )_I\\
     \le C \norm{  y_{kh}(\uopt_0)-y_{kh}(\uopt_\alpha) }_I^2
          + \frac{1}{16}\norm{ y_{kh}(\uopt_0)-y_{kh}(\uopt_d) }_I^2,
  %\end{aligned}
  %\end{equation*}
  \end{multline*}
and move the latter addend to the left-hand side
of \eqref{E:follrobust1rea}.
We split the former addend with the help of
\eqref{E:ykherr} from Lemma~\ref{L:yDiscrStabh} and the regularization
errors \eqref{E:regconvumeas2} and \eqref{E:regconvymeas} 
to obtain with the help of Young's inequality
  \begin{equation}\label{E:ykhu0ua}
  \begin{aligned}
  \norm{  y_{kh}(\uopt_0)-y_{kh}(\uopt_\alpha) }_I^2
   &\le C(\norm { (\hat y_{kh}-\hat y)(\uopt_0-\uopt_\alpha) }_I
       + \norm{y(\uopt_0)-y(\uopt_\alpha)}_I )^2\\
   &\le C( (k+h^2)\alpha^{\kappa/2} + \alpha^\frac{1+\kappa}{2} )^2\\
   &\le C (k+h^2)^{2(\kappa+1)} + C\alpha^{1+\kappa}
  \end{aligned}
  \end{equation}
where $\hat y_{kh}$ and $\hat y$ denote the solution operators
for the state equation with initial value zero. 

For $IIb$, we invoke again Young's inequality and the inclusion
$\uopt_0\in\Uad\subset L^\infty$ to get the estimate
  \begin{equation*}
    \alpha (\uopt_0, \uopt_0-\uopt_d)_U
     \le C\alpha \norm{\uopt_d-\uopt_0}_{L^1(\Omega_U)}
           \le C\alpha^{\kappa+1} + \frac{1}{16}
              \norm{\uopt_d-\uopt_0}_{
                    L^1(\Omega_U)}^{1+1/\kappa}.
  \end{equation*}
  We now move the second summand to the left
  of \eqref{E:follrobust1rea}
  since $A=\Omega_U$ up to a set of measure zero.

The addend $IIa$ can be rewritten and estimated with again
the help of Young's inequality to get
\begin{equation*}
\begin{aligned}
   &\Big(
          -\dual Bp(y(\uopt_\alpha)-y_d) 
               + \dual Bp_{kh}(y_{kh}(\uopt_\alpha)-y_d)
          ,\uopt_0-\uopt_d \Big)_U\\
   &\le C\Big(
         \dual B(p_{kh}-p)(y(\uopt_\alpha)-y_d)
          + \dual Bp_{kh}(y_{kh}(\uopt_\alpha) -y(\uopt_\alpha))
          ,\uopt_0-\uopt_d \Big)_U\\  
   &\le C
      \norm{ \dual B(p_{kh}-p)(y(\uopt_\alpha)-y_d)
          + \dual Bp_{kh}(y_{kh}(\uopt_\alpha) -y(\uopt_\alpha))
           }_{L^\infty(\Omega_U)}^{1+\kappa} \\
   &\quad\quad + \frac{1}{16} \norm{\uopt_0-\uopt_d  
           }_{L^1(\Omega_U)}^{1+1/\kappa} .
\end{aligned}
\end{equation*}
The last addend can now be moved to the left
of \eqref{E:follrobust1rea}.

For summand $I$, we add an additional term to get
  \begin{multline*}
     \Big( -\dual Bp(y(\uopt_0)-y(\uopt_\alpha)) 
                  ,\uopt_0-\uopt_d \Big)_U\\
   = \Big( \dual B(p_{kh}-p)(y(\uopt_0)-y(\uopt_\alpha)) 
          -\dual Bp_{kh}(y(\uopt_0)-y(\uopt_\alpha))  
                 ,\uopt_0-\uopt_d \Big)_U.
  \end{multline*}
We estimate the second addend with the help of the
regularization error \eqref{E:regconvymeas} as 
  \begin{equation*}
   \Big( y(\uopt_0)-y(\uopt_\alpha), 
        y_{kh}(\uopt_0)-y_{kh}(\uopt_d) \Big)_I
   \le C\alpha^{1+\kappa} 
        + \frac{1}{16} \norm{ y_{kh}(\uopt_0)-y_{kh}(\uopt_d)}_I^2,
  \end{equation*}
and move the second addend to the left
of \eqref{E:follrobust1rea}.
For the remaining addend, we use again the above mentioned results
and the estimate \eqref{E:bangU21} to obtain
  \begin{equation*}
  \begin{aligned}
    &\Big( \dual B(p_{kh}-p)(y(\uopt_0)-y(\uopt_\alpha)) 
                 ,\uopt_0-\uopt_d \Big)_U\\
    &= \Big(y(\uopt_0)-y(\uopt_\alpha),
              (\hat y_{kh}-\hat y)(\uopt_0-\uopt_d)\Big)\\
    &\le C \norm{y(\uopt_0)-y(\uopt_\alpha)  }_I^2
       + C\norm{(\hat y_{kh}-\hat y)(\uopt_0-\uopt_d)}_I^2\\
    &\le C\alpha^{1+\kappa} + C(k+h^2)^2 \norm{\uopt_0-\uopt_d}_U^2 \\
    &\le C\alpha^{1+\kappa} + C(k+h^2)^2 
             \norm{\uopt_0-\uopt_d}_{L^1(\Omega_U)}\\
    &\le C\alpha^{1+\kappa} + C(k+h^2)^{2(\kappa+1)}  
        + \frac{1}{16} \norm{\uopt_0-\uopt_d}
                  _{L^1(\Omega_U)}^{1+1/\kappa}
  \end{aligned}
  \end{equation*}
and move the last term to the left of \eqref{E:follrobust1rea}.

Collecting all previous estimates, we
with $L:=L^\infty(I,\H)$ obtain
  \begin{equation*}
   \begin{aligned}
      &\norm{\uopt_0-\uopt_d}^{1+1/\kappa}_{L^1(A)}
        + \alpha \norm{\uopt_0-\uopt_d}_U^2
        + \norm{y_{kh}(\uopt_0)-y_{kh}(\uopt_d)}_I^2\\
      &\le C\Big(
          \alpha^{\kappa + 1}
          +(k+h^2)^{2(\kappa+1)}
         + \norm{ (p_{kh}-p)(y(\uopt_\alpha)-y_d)
           }_{L}^{1+\kappa} \\
      &\quad\quad
          + \norm{ p_{kh}(y_{kh}(\uopt_\alpha)
                   - \mathcal P_{Y_k} y(\uopt_\alpha))
           }_{L}^{1+\kappa} 
         \Big).
\end{aligned}
\end{equation*}

Note that we introduced the orthogonal projection
$\mathcal P_{Y_k}$ in the last
addend, which is possible due to the definition 
of the fully discrete adjoint equation \eqref{E:AdjDiscrh}.
Furthermore, we used stability of $\dual B$ for located controls.

We combine the previous estimate with
the (improved) adjoint error estimate from Lemma~\ref{L:errphklool2},
the adjoint stability from Lemma~\ref{L:AdjDiscrStabh},
and the superconvergence result from Lemma~\ref{L:ImprRatekh},
making use of the regularity given in Lemma~\ref{L:regOCPposalpha},
to get
\begin{equation}\label{E:bestrobust}
   \begin{aligned}
      &\norm{\uopt_0-\uopt_d}^{1+1/\kappa}_{L^1(A)}
        + \alpha \norm{\uopt_0-\uopt_d}_U^2
        + \norm{y_{kh}(\uopt_0)-y_{kh}(\uopt_d)}_I^2\\
      &\le C\Big(
          \alpha +h^2d_0 + k^2(1+d_1^+(\uopt_\alpha))
            \Big)^{1+\kappa}.
\end{aligned}
\end{equation}
With the help of the estimate given in Lemma~\ref{L:smdecderiv}
for $p=2$, i.e.,
\begin{equation*}
  \norm{\partial_t \uopt_\alpha}_{L^2(\Omega_U)}
     \le C\max(C_{ab},\alpha^{\kappa/2-1}), 
\end{equation*}
we conclude that for $\alpha > 0$ sufficiently small it holds
\begin{equation}\label{E:d1plusdecay}
   d_1^+(\uopt_\alpha)  \le C  
       +C\max(C_{ab},\alpha^{\kappa/2-1}). 
\end{equation}
In conclusion, we get
\begin{equation*}
   \begin{aligned}
      &\norm{\uopt_0-\uopt_d}^{1+1/\kappa}_{L^1(A)}
        + \alpha \norm{\uopt_0-\uopt_d}_U^2
        + \norm{y_{kh}(\uopt_0)-y_{kh}(\uopt_d)}_I^2\\
      &\le C\Big(
          \alpha +h^2d_0+ k^2
        \max(1,C_{ab},\alpha^{\kappa/2-1})
            \Big)^{1+\kappa}.
\end{aligned}
\end{equation*}

Finally, recall that the $U$ error in the control can be derived 
from the corresponding $L^1$ error using the estimate \eqref{E:bangU21}.
\end{proof}

From the previous theorem we get coupling rules
for $\alpha$ and $k$, always with $\alpha = h^2$,
and convergence rates, which are shown in the following table.
\begin{table}[hbt]
\begin{center}
\begin{tabular}{c|c|c}
\hline
$\alpha=$ 
& $\norm{\uopt_d-\uopt_0}_{L^1(\Omega_U)}\le C\dots$
& if \\
\hline
$k^{4/(4-\kappa)}$
& $\alpha^\kappa = h^{2\kappa} = k^{4\kappa/(4-\kappa)}$
& $\kappa < 2$ \\
$k^2$
& $\alpha^\kappa = h^{2\kappa} = k^{2\kappa}$
& $\kappa \ge 2$\\
\hline
\end{tabular}
\end{center}
\caption{Coupling and convergence implied by Theorem~\ref{T:uimprobust}.}
\label{tab:coupr}
\end{table}

Note that in any case we get a better rate than
$k^\kappa$ proven in Theorem~\ref{T:fullrobust2}.

\begin{coro}\label{C:uimprobust}
 Let the assumptions of the previous Theorem hold.
 For the adjoint and the projected state we have the error estimate
\begin{multline*}
 \norm{\popt_0-\popt_d}_{L^\infty(I,\H)}
        + \norm{\yopt_0-\pi_{P_k^*}\yopt_d}_I\\
 \le C\alpha^{\max(\frac{\kappa+1}{2},\kappa)} 
     +C\Big( h^2d_0 
      +k^2\max(1,C_{ab},\alpha^{\kappa/2-1}) 
         \Big)^{\min(1,\frac{\kappa+1}{2})}.
\end{multline*}
\end{coro}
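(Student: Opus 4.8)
The plan is to run the argument of Corollary~\ref{C:nonrobest} once more, but to feed it the robust control bound of Theorem~\ref{T:uimprobust} (centered at $\uopt_\alpha$ via the triangle inequality, cf.\ Remark~\ref{R:02alpha}) in place of the non-robust one. For the adjoint state I would use the same three-term splitting as in Corollary~\ref{C:nonrobest},
\begin{equation*}
  \norm{\popt_\alpha-\popt_d}_{L^\infty(I,\H)}
  \le \norm{\popt_\alpha-p_{kh}(\yopt_\alpha-y_d)}_{L^\infty(I,\H)}
    + \norm{p_{kh}(\mathcal P_{Y_k}\yopt_\alpha-y_{kh}(\uopt_\alpha))}_{L^\infty(I,\H)}
    + \norm{p_{kh}(y_{kh}(\uopt_\alpha)-\yopt_d)}_{L^\infty(I,\H)},
\end{equation*}
bounding the first two summands by the improved estimate of Lemma~\ref{L:errphklool2}, the superconvergence result Lemma~\ref{L:ImprRatekh} and the regularity of Lemma~\ref{L:regOCPposalpha}, i.e.\ by $C\bigl(h^2d_0+k^2d_1^+(\uopt_\alpha)\bigr)$, and the third one by the discrete adjoint stability Lemma~\ref{L:AdjDiscrStabh}, i.e.\ by $C\norm{y_{kh}(\uopt_\alpha)-\yopt_d}_I$. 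For the projected state $\norm{\yopt_\alpha-\pi_{P_k^*}\yopt_d}_I$ I would argue analogously (or invoke the decomposition already carried out in \cite[Corollary~71]{dissnvd}), splitting off a state-equation discretization error of order $\bigl(h^2+k^2\bigr)d_1^+(\uopt_\alpha)$ and a remainder controlled, by boundedness of $\pi_{P_k^*}$ (\cite[Lemma~31]{dissnvd}), by $C\norm{y_{kh}(\uopt_\alpha)-\yopt_d}_I$.

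Everything thus reduces to the auxiliary quantity $\norm{y_{kh}(\uopt_\alpha)-\yopt_d}_I$. Here I would reuse the identity \eqref{E:supconvykhualpha} from the proof of Theorem~\ref{T:fullrobust2},
\begin{equation*}
  \norm{\yopt_d-y_{kh}(\uopt_\alpha)}^2_I
  \le C\bigl( \norm{p_{kh}(\yopt_\alpha-y_d)-\popt_\alpha}_{L^\infty(I,\H)}
            + \norm{p_{kh}(y_{kh}(\uopt_\alpha)-\yopt_\alpha)}_{L^\infty(I,\H)} \bigr)
        \norm{\uopt_\alpha-\uopt_d}_{L^1(\Omega_U)},
\end{equation*}
estimate the bracket once more by Lemmas~\ref{L:errphklool2}, \ref{L:ImprRatekh}, \ref{L:AdjDiscrStabh} and \ref{L:regOCPposalpha} as $C\bigl(h^2d_0+k^2d_1^+(\uopt_\alpha)\bigr)$, and estimate $\norm{\uopt_\alpha-\uopt_d}_{L^1(\Omega_U)}$ through Theorem~\ref{T:uimprobust} and the triangle inequality by $C\bigl(\alpha+h^2+k^2\max(1,C_{ab},\alpha^{\kappa/2-1})\bigr)^{\kappa}(1+d_0^\kappa)$. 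Finally I would collapse the non-robust constant $d_1^+(\uopt_\alpha)$ using the smoothness-decay consequence \eqref{E:d1plusdecay}, i.e.\ $d_1^+(\uopt_\alpha)\le C+C\max(C_{ab},\alpha^{\kappa/2-1})$, so that $h^2d_0+k^2d_1^+(\uopt_\alpha)$ turns into $C\bigl(h^2d_0+k^2\max(1,C_{ab},\alpha^{\kappa/2-1})\bigr)$.

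It remains to add the regularization error: since $\popt_0-\popt_\alpha=p(\yopt_0-\yopt_\alpha)$ and, by the parabolic regularity already used in Lemma~\ref{L:regOCPposalpha}, the continuous adjoint solution operator is $\LIIH\to L^\infty(I,\H)$-stable, we get $\norm{\popt_0-\popt_\alpha}_{L^\infty(I,\H)}\le C\norm{\yopt_0-\yopt_\alpha}_I$, and \eqref{E:regconvymeas} together with \eqref{E:regconvymeasimp} gives $\norm{\yopt_0-\yopt_\alpha}_I\le C\alpha^{\max(\frac{\kappa+1}{2},\kappa)}$; doing the same for $\norm{\yopt_0-\yopt_\alpha}_I$ in the projected-state estimate and collecting all terms yields the claim. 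I expect the only real work to be the exponent bookkeeping. Writing $Y:=h^2d_0+k^2\max(1,C_{ab},\alpha^{\kappa/2-1})$, the product $Y\cdot\alpha^\kappa$ coming from the auxiliary estimate is split by Young's inequality — with conjugate exponent $\kappa+1$ when $\kappa\le 1$ and conjugate exponent $2$ when $\kappa>1$ — into $CY^{\kappa+1}+C\alpha^{\kappa+1}$ respectively $CY^2+C\alpha^{2\kappa}$, which after taking the square root produces precisely the $\alpha$-exponent $\max(\tfrac{\kappa+1}{2},\kappa)$; combining the resulting $Y^{(\kappa+1)/2}$-term with the power-one term $Y$ from the direct adjoint discretization error leaves the factor $Y^{\min(1,(\kappa+1)/2)}$, and one must additionally verify that the subdominant remainders of $(k+h^2)^{2(\kappa+1)}$- and $\alpha^{2\kappa}$-type are genuinely absorbed under the couplings of interest and that the negative power $\alpha^{\kappa/2-1}$ hidden inside $Y$ is treated consistently with $d_0=d_0(\uopt_\alpha)$.
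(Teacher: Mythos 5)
Your proposal is correct and follows essentially the same route as the paper: the paper likewise reuses the three-term splitting from Corollary~\ref{C:nonrobest} to reduce everything to $k^2d_1^+ + h^2d_0 + \norm{y_{kh}(\uopt_\alpha)-\yopt_d}_I$, bounds $\norm{\uopt_\alpha-\uopt_d}_{L^1(A)}$ via \eqref{E:bestrobust} and Remark~\ref{R:02alpha}, re-enters the proof of Theorem~\ref{T:fullrobust2} at \eqref{E:supconvykhualpha}, and closes with Young's inequality, the regularization errors, and the decay estimate \eqref{E:d1plusdecay}. Your exponent bookkeeping with conjugate exponents $\kappa+1$ (for $\kappa\le 1$) and $2$ (for $\kappa>1$) reproduces exactly the claimed powers $\max(\frac{\kappa+1}{2},\kappa)$ and $\min(1,\frac{\kappa+1}{2})$.
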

\begin{proof}
Inspecting the proof of Corollary~\ref{C:nonrobest},
we obtain the estimate
\begin{equation*}
      \norm{\popt_\alpha-\popt_d}_{L^\infty(I,\H)}
        + \norm{\yopt_\alpha-\pi_{P_k^*}\yopt_d}_I\\
    \le C(k^2d_1^+ + h^2d_0 + \norm{y_{kh}(\uopt_\alpha)-\yopt_d}_I).
\end{equation*}
%Combining this estimate with \eqref{E:ykhu0ua}, \eqref{E:bestrobust},
To estimate the last addend, let us first combine
the estimate \eqref{E:bestrobust} from the proof of 
Theorem~\ref{T:uimprobust} with Remark~\ref{R:02alpha} to get
\begin{equation*}
   \norm{\uopt_\alpha-\uopt_d}_{L^1(A)}
      \le C\Big(
          \alpha +h^2d_0 + k^2(1+d_1^+(\uopt_\alpha))
            \Big)^{\kappa}.
\end{equation*}
With this estimate, we now follow the proof of 
Theorem~\ref{T:fullrobust2} from the entry point 
\eqref{E:supconvykhualpha} onwards. We obtain
\begin{multline*}
      \norm{\popt_\alpha-\popt_d}_{L^\infty(I,\H)}^2
       + \norm{\yopt_\alpha-\pi_{P_k^*}\yopt_d}_I^2\\
    \le C\left(\left(h^2d_0 + k^2 d_1^+\right)^2 
          + \left(h^2d_0 + k^2 d_1^+\right)
      \left(\alpha+h^2d_0+k^2 \left(1+d_1^+\right)\right)^\kappa
         \right).
\end{multline*}
With Young's inequality, the regularization error \eqref{E:regconvymeas},
property \eqref{E:dualTlinfty}, and the decay estimate 
\eqref{E:d1plusdecay}, we finally get the claim.
\end{proof}

\section{Numerics}

We will now consider a test example in order to finally validate
numerically the theoretical results.

As we have previously said, we solve numerically the regularized problem
\eqref{OCPkh} for some $\alpha > 0$ as an approximation of the 
limit problem \eqref{OCPl}.
Thus, we have the influence of two errors: 
The regularization error in dependence of the parameter
$\alpha > 0$ and the discretization error due to space and time 
approximation.
The second error depends on the fineness of the space and time grid,
respectively, thus on the parameters $h$ and $k$.

We do not investigate the 
time discretization error for fixed 
positive $h$ and $\alpha$ by taking $k\to 0$, since this can be found in
\cite{DanielsHinzeVierling}.
The numerical behavior of the error 
if $h\to 0$, again for fixed $\alpha > 0$ but now
with fixed $k$ instead of $h$ is discussed 
in \cite[section~3.1.2]{dissnvd}. 
The regularization error
for fixed small discretization parameters $k$ and $h$ in dependence
of the parameter $\kappa$ from the measure condition \eqref{E:struct}
if $\alpha \to 0$ can be found in \cite{daniels} 
or \cite[section~3.2]{dissnvd}.

Here, we only report on the coupling of
regularization and discretization parameters as
proposed by Theorem~\ref{T:uimprobust} and Table~\ref{tab:coupr}.

We make use of the fact that instead of the linear control
operator $B$, given by \eqref{E:B}, we can also use an \emph{affine linear}
control operator
\begin{equation}\label{E:Btilde}
\tilde B: U\rightarrow L^2(I,\Vd)\,,\quad u\mapsto g_0 + Bu
\end{equation}
where $g_0$ is a fixed function.
If we assume that $g_0$ is an element of the space 
$H^1(I,L^2(\Omega))$ with 
$g_0(0)\in\V$ and $g_0(T)\in\V$, the preceding theory remains valid
since $g_0$ can be interpreted as a modification of $y_d$.

For the limit problem \eqref{OCPl}, we consider a test example
which is a bang-bang problem with $\meas(A^c) = 0$ and $\kappa=1$
in Assumption~\ref{A:sourcestruct}. 

With a space-time domain $\Omega\times I := (0,1)^2 \times (0,0.5)$,
we consider a located control function $\uopt$
and a constant $a:=2$, not to be confused with the lower
bound $a_1$ of the admissible set $\Uad$ defined below.
This constant $a$ influences the number 
of switching points between the active and inactive set. 
Furthermore, we define the functions
\[
      g_1(x_1,x_2) := \sin(\pi x_1)\sin(\pi x_2)\,,
\]
\[
      w_a(t,x_1,x_2) 
          := \cos\left(\frac tT\,2\pi a\right)\cdot g_1(x_1,x_2)\,,
\]
and choose an optimal adjoint state
\[
    \popt := \frac{-T}{2\pi a} \sin\left( \frac tT 2\pi a\right) g_1\,,
\]
which is nonzero almost everywhere, and since 
\[
    -\partial_t \popt - \Delta \popt
       = \cos\left( \frac tT 2\pi a\right) g_1 
          -\frac{T}{2\pi a} \sin\left( \frac tT 2\pi a\right) 2\pi^2 g_1 
               = \yopt - y_d \,,
\]
we get the function $y_d$ by taking $\yopt$ as
\begin{equation}\label{E:tht2y}
      \yopt(t,x_1,x_2) := w_a(t,x_1,x_2)\,.
\end{equation}

From the relation \eqref{E:bangbang}
we conclude that the optimal control is given by
\[
    \uopt =             \begin{cases}
        a_1    & \text{if $\dual B\popt > 0$},\\
        b_1    & \text{if $\dual B\popt < 0$}
                        \end{cases}
\]

Note that $\dual B\popt (t) = (g_1,\popt(t))_{\H}$,
the initial value of the optimal state $\yopt$ is
\[
      y_0(x_1,x_2) = \yopt(0,x_1,x_2) = g_1(x_1,x_2)\,,
\]
and $(g_1,g_1)_{\H} = 0.25$. We obtain
\begin{equation}\label{E:tht2g0}
      g_0= g_1 2 \pi \left( -\frac aT \sin\left(\frac tT\,2\pi a\right)
          + \pi\cos\left(\frac tT\,2\pi a\right) \right)-B\uopt\,,
\end{equation}
and finally define the bounds of the admissible set $\Uad$ as
$a_1:=0.2$ and $b_1:=0.4$.

Since $\kappa=1$ in this example, 
we conclude with Theorem~\ref{T:uimprobust},
Corollary~\ref{C:uimprobust}, and the second
line of Table~\ref{tab:coupr} the estimate
\begin{multline}\label{E:couplingest}
    \norm{\uopt_0-\uopt_d}_U^2
    + \norm{\uopt_0-\uopt_d}_{L^1(A)} 
    + \norm{\popt_0-\popt_d}_{L^\infty(I,\H)}
        + \norm{\yopt_0-\pi_{P_k^*}\yopt_d}_I\\
   \le C(\alpha + h^2 + k^{4/3}).
\end{multline}
Consequently, we set $\text{Nh}=(2^{\ell}+1)^2$, 
$\text{Nk}=(2^{3/2\ell+1}+1)$, and 
$\alpha = 2^{-2\ell}$ with $\ell=1,2,3,4,5,6$, to
obtain second order convergence with respect to $h$ 
in \eqref{E:couplingest}.

We solve \eqref{OCPkh} numerically with the above data 
using a fixed-point iteration for equation \eqref{FONCkh}. 
Each fixed-point iteration is initialized with the 
starting value 
$u_{kh}^{(0)}:=a_1$ which is the lower bound of the admissible set.
As a stopping criterion for the fixed-point iteration, we require for
the discrete adjoint states belonging to the current
and the last iterate that
\[
    \norm{ \dual B \left(p_{kh}^{(i)} - p_{kh}^{(i-1)}\right)
         }_{L^\infty(\Omega\times I)} < t_0
\]
where $t_0:=10^{-5}$ is a prescribed threshold.

The results are given in Tables~\ref{tab:ex3u}, \ref{tab:ex3y},
\ref{tab:ex3yp}, and \ref{tab:ex3p}. 
We also refer to Figure~\ref{fig:ex3}.

As one can see from the tables, the coupling shows the expected
behavior for the error in the optimal control, projected state,
and adjoint state.

Note that for the state $\yopt$, we
observe convergence of order $3/2$, which means by the coupling
from above ($k=h^{3/2}$) first order convergence in $k$.
Thus, it is in accordance with our expectation since
the state is discretized piecewise constant in time.
This is depicted in Table~\ref{tab:ex3y}. 

A better and second order convergent approximation of the state 
is given by the projection $\pi_{P_k^*}y_{kh}$ 
of the computed discrete state $y_{kh}$, see Corollary~\ref{C:nonrobest}
and for the corresponding numerical results see Table~\ref{tab:ex3yp}. 
This better approximation of the state can be obtained without
further numerical effort: One only has to interpret the 
vector containing the values of $y_k$ on each interval $I_m$
as a vector of linearly-in-time linked values on the gridpoints
of the dual grid $t_1^* < \dots < t_M^*$.

Figure~\ref{fig:ex3} illustrates the convergence of $u_{kh}$ to $\uopt$. 
Note that the intersection points between the inactive set 
$\mathcal I_{kh}:=\twoset{t\in I}{a < u_{kh}(t) < b}$ and the active set 
$\mathcal A_{kh}:=I\backslash \mathcal I_{kh}$ 
need not coincide with the time grid points since we use 
variational discretization for the control.

\begin{table}[hbt]
\begin{center}
\begin{tabular}{ccccccc}
\hline
& $\norm{\uopt-u_{kh}}$
& $\norm{\uopt-u_{kh}}$
& $\text{EOC}$
& $\text{EOC}$\\
$\ell$ 
& $L^1(I,\mathbb R)$ 
& $L^2(I,\mathbb R)$ 
& $L^1$
& $L^2$\\
\hline
 1 &  0.05208333 &  0.10206207 &    /  &    /  \\
 2 &  0.05156250 &  0.10155048 &  0.01 &  0.01 \\
 3 &  0.01551730 &  0.05249039 &  1.73 &  0.95 \\
 4 &  0.00395214 &  0.02696386 &  1.97 &  0.96 \\
 5 &  0.00100074 &  0.01375946 &  1.98 &  0.97 \\ 
 6 &  0.00026290 &  0.00704586 &  1.93 &  0.97 \\
\hline
\end{tabular}
\end{center}
\caption{
     Errors and h-EOC in the control ($\alpha = k^{4/3} = h^2$).}
\label{tab:ex3u}
\end{table}

\begin{table}[hbt]
\begin{center}
\begin{tabular}{ccccccc}
\hline
& $\norm{\yopt-y_{kh}}$
& $\norm{\yopt-y_{kh}}$
& $\norm{\yopt-y_{kh}}$
& $\text{EOC}$
& $\text{EOC}$
& $\text{EOC}$\\
$\ell$ 
& $L^1(I,L^1(\Omega))$
& $L^2(I,L^2(\Omega))$
& $L^\infty(I,L^\infty(\Omega))$
& $L^1$
& $L^2$
& $L^\infty$\\
\hline
 1 &  0.04168338 &  0.14344433 &  0.77006182 &    /  &    /  &    /  \\
 2 &  0.02298795 &  0.05061771 &  0.24946457 &  0.86 &  1.50 &  1.63 \\
 3 &  0.00877452 &  0.01795226 &  0.08863801 &  1.39 &  1.50 &  1.49 \\
 4 &  0.00314952 &  0.00624197 &  0.02943581 &  1.48 &  1.52 &  1.59 \\
 5 &  0.00111871 &  0.00218973 &  0.00994956 &  1.49 &  1.51 &  1.56 \\
 6 &  0.00039580 &  0.00077075 &  0.00339060 &  1.50 &  1.51 &  1.55 \\
\hline
\end{tabular}
\end{center}
\caption{
     Errors and h-EOC in the state ($\alpha = k^{4/3} = h^2$).}
\label{tab:ex3y}
\end{table}

\begin{table}[hbt]
\begin{center}
\begin{tabular}{ccccccc}
\hline
& $\norm{\yopt-\pi_{P_k^*} y_{kh}}$
& $\norm{\yopt-\pi_{P_k^*} y_{kh}}$
& $\norm{\yopt-\pi_{P_k^*} y_{kh}}$
& $\text{EOC}$
& $\text{EOC}$
& $\text{EOC}$\\
$\ell$ 
& $L^1(I,L^1(\Omega))$
& $L^2(I,L^2(\Omega))$
& $L^\infty(I,L^\infty(\Omega))$
& $L^1$
& $L^2$
& $L^\infty$\\
\hline
 1 &  0.03984472 &  0.12699052 &  0.67616861 &    /  &    /  &    /  \\
 2 &  0.01063414 &  0.02423705 &  0.15855276 &  1.91 &  2.39 &  2.09 \\
 3 &  0.00235558 &  0.00482756 &  0.02588151 &  2.17 &  2.33 &  2.61 \\
 4 &  0.00059757 &  0.00116777 &  0.00526572 &  1.98 &  2.05 &  2.30 \\
 5 &  0.00015345 &  0.00029551 &  0.00128779 &  1.96 &  1.98 &  2.03 \\
 6 &  0.00003968 &  0.00007581 &  0.00032323 &  1.95 &  1.96 &  1.99 \\
\hline
\end{tabular}
\end{center}
\caption{
     Errors and h-EOC in the projected state ($\alpha = k^{4/3} = h^2$).}
\label{tab:ex3yp}
\end{table}

\begin{table}[hbt]
\begin{center}
\begin{tabular}{ccccccc}
\hline
& $\norm{\popt-p_{kh}}$
& $\norm{\popt-p_{kh}}$
& $\norm{\popt-p_{kh}}$
& $\text{EOC}$
& $\text{EOC}$
& $\text{EOC}$\\
$\ell$ 
& $L^1(I,L^1(\Omega))$
& $L^2(I,L^2(\Omega))$
& $L^\infty(I,L^\infty(\Omega))$
& $L^1$
& $L^2$
& $L^\infty$\\
\hline
 1 &  0.00175355 &  0.00559389 &  0.02497779 &    /  &    /  &    /  \\
 2 &  0.00052886 &  0.00120225 &  0.00578048 &  1.73 &  2.22 &  2.11 \\
 3 &  0.00012807 &  0.00026289 &  0.00128201 &  2.05 &  2.19 &  2.17 \\
 4 &  0.00003156 &  0.00006214 &  0.00028508 &  2.02 &  2.08 &  2.17 \\
 5 &  0.00000786 &  0.00001530 &  0.00006829 &  2.01 &  2.02 &  2.06 \\
 6 &  0.00000195 &  0.00000377 &  0.00001649 &  2.01 &  2.02 &  2.05 \\
\hline
\end{tabular}
\end{center}
\caption{
     Errors and h-EOC in the adjoint state ($\alpha = k^{4/3} = h^2$).}
\label{tab:ex3p}
\end{table}

\begin{figure}
 \centering  % trim=4cm 10cm 4cm 10cm
 \subfloat[$\ell=3$]{\includegraphics[trim=25mm 75mm 15mm 91mm,
        clip,width=0.3\textwidth]{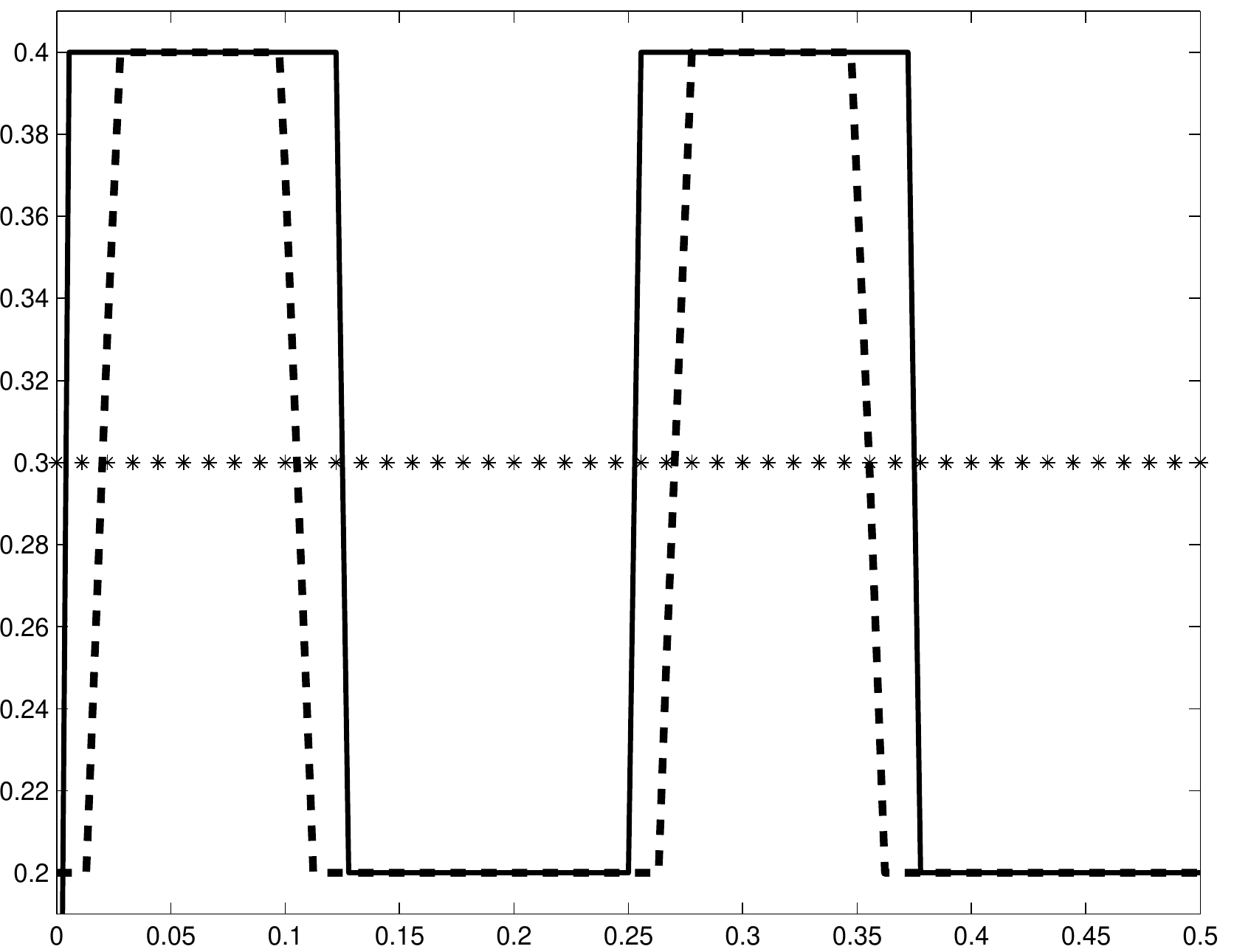}}
 \subfloat[$\ell=4$]{ \includegraphics[trim=25mm 75mm 15mm 91mm,
        clip,width=0.3\textwidth]{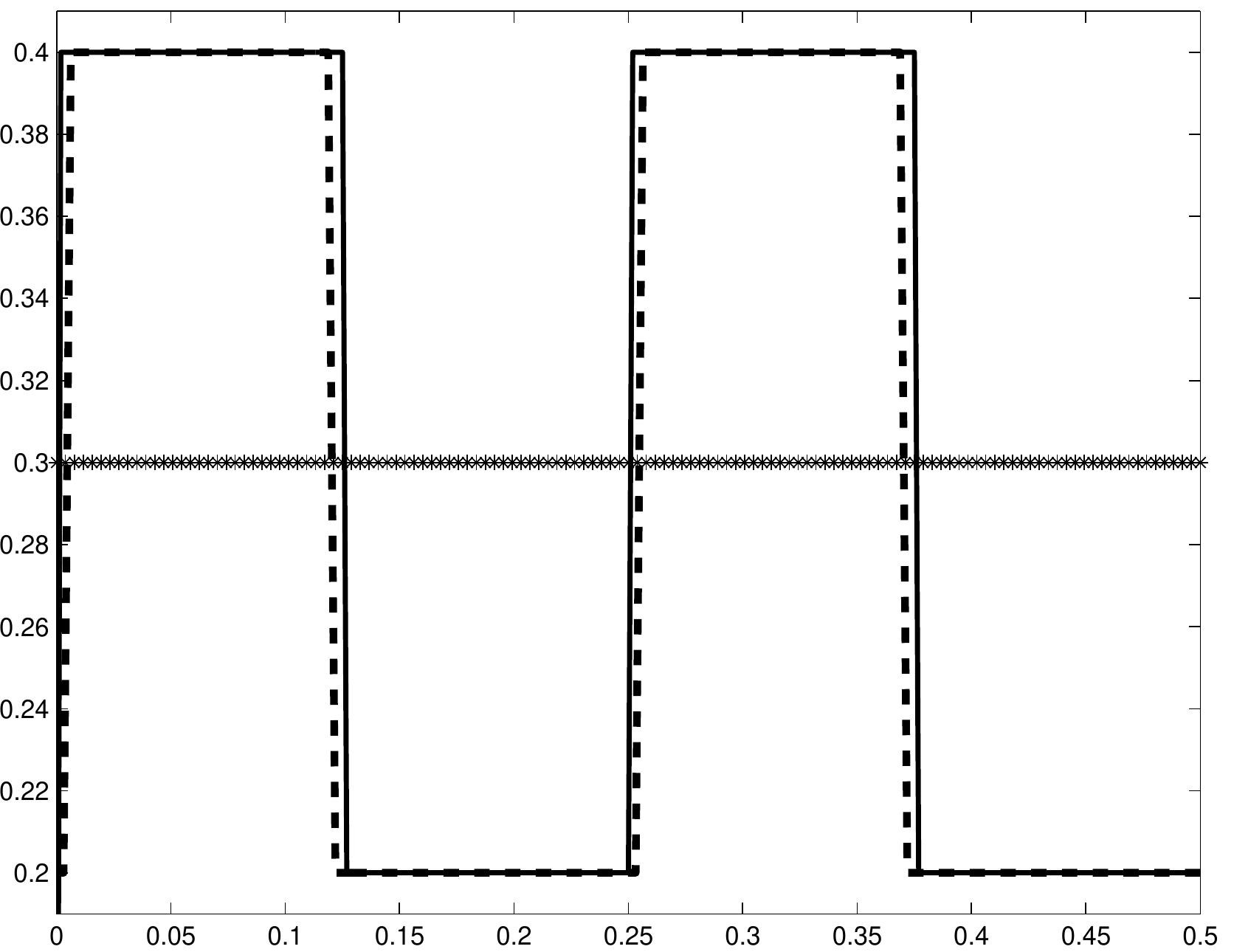}}
 \subfloat[$\ell=5$]{ \includegraphics[trim=25mm 75mm 15mm 91mm,
        clip,width=0.3\textwidth]{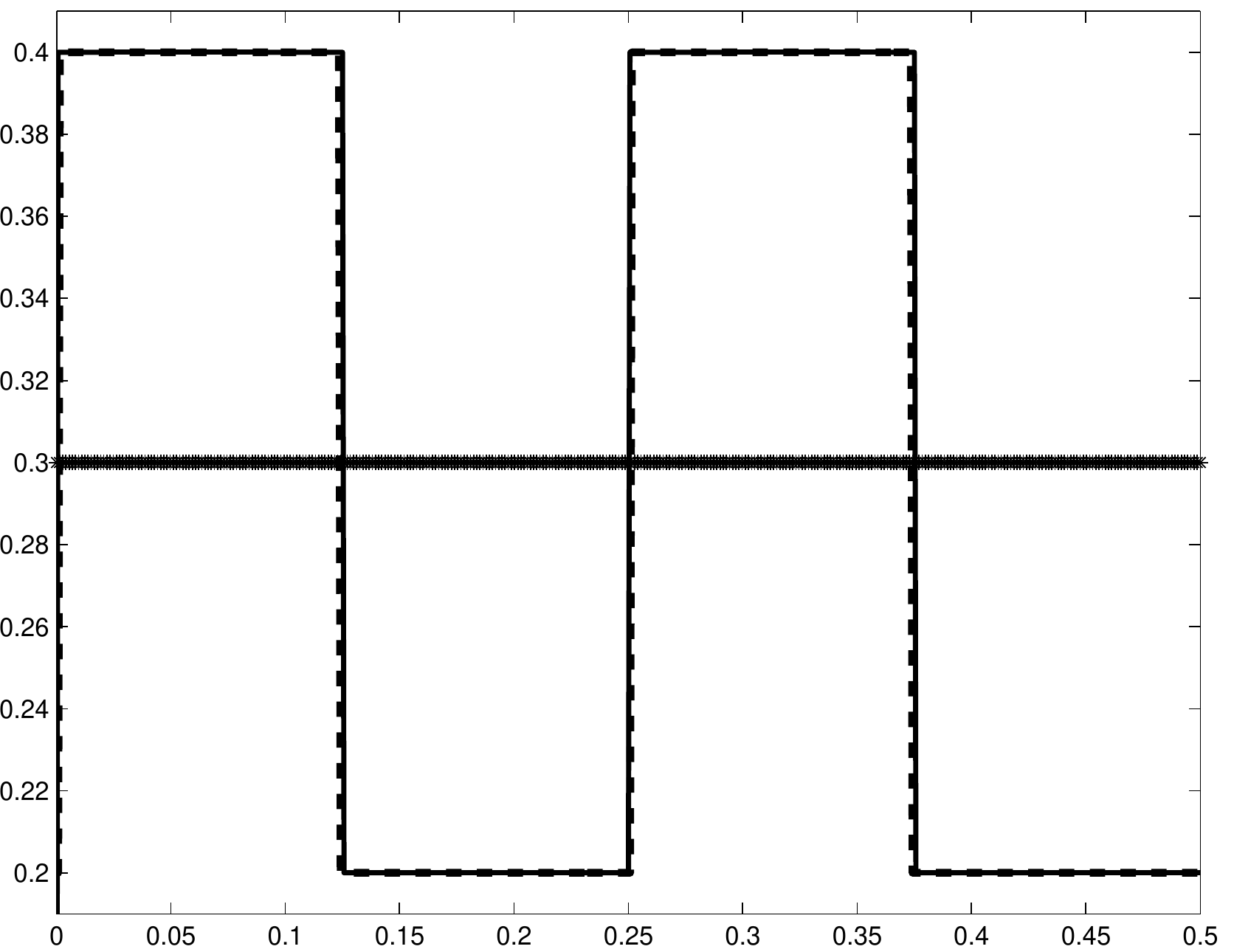}}
    \caption{
         Optimal control $\uopt$ (solid) and computed counterpart
       $u_{kh}$ (dashed) over time after level $\ell$
        ($\alpha = k^{4/3} = h^2$).  }
    \label{fig:ex3}
\end{figure}

Let us mention that the convergence of the fixed-point iteration 
is in general guaranteed only for values of $\alpha$ not too small.
This is an immediate consequence
of Banach's fixed-point theorem in combination with \eqref{FONCkh}.
In the numerical examples we considered, no convergence problems
occurred, even for very small values of $\alpha$. This might be
due to the fact that we consider controls which ``live'' in one
space dimension only. For higher dimensions, the situation 
is more delicate. 
There, the application of semismooth
Newton methods has turned out to be fruitful, see 
\cite{HinzeVierling2012} for its
numerical analysis in the case of variational discretization of 
elliptic optimal control problems.

\printbibliography
\end{document}